\documentclass[reqno,tbtags]{amsart}

\usepackage{cite}
\usepackage[shortlabels]{enumitem}
\usepackage{hyperref}
\usepackage{amssymb}

\newtheorem{thm}{Theorem}[section]
\newtheorem{lem}[thm]{Lemma}
\newtheorem{prop}[thm]{Proposition}
\newtheorem{cor}[thm]{Corollary}
\newtheorem*{PC}{Condition (PC)}
\newtheorem{question}[thm]{Question}

\theoremstyle{definition}
\newtheorem{dfn}[thm]{Definition}

\theoremstyle{remark}
\newtheorem{rem}[thm]{Remark}
\newtheorem{exm}[thm]{Example}

\numberwithin{equation}{section}

\newcommand{\dN}{\mathbb{N}}

\newcommand{\dR}{\mathbb{R}}
\newcommand{\dC}{\mathbb{C}}

\newcommand{\cH}{\mathcal{H}}
\newcommand{\cC}{\mathcal{C}}

\newcommand{\cM}{\mathcal{M}}

\DeclareMathOperator{\Lin}{Lin}

\DeclareMathOperator{\tr}{tr}

\newcommand{\dif}{\mathrm{d}}

\DeclareMathOperator{\ran}{rank}

\newcommand{\gk}{\mathfrak{k}}
\newcommand{\gl}{\mathfrak{l}}
\newcommand{\gs}{\mathfrak{s}}

\newcommand{\cB}{\mathcal{B}}

\newcommand{\cE}{\mathcal{E}}

\newcommand{\cN}{\mathcal{N}}

\newcommand{\cX}{\mathcal{X}}
\newcommand{\cW}{\mathcal{W}}
\newcommand{\cV}{\mathcal{V}}

\newcommand{\fX}{\mathfrak{X}}

\newcommand{\fM}{\mathfrak{M}}
\newcommand{\fN}{\mathfrak{N}}

\newcommand{\im}{\mathrm{i}}

\newcommand{\bM}{\mathbf{M}}
\newcommand{\bP}{\mathbf{P}}

\newcommand{\trn}{\mathrm{T}}
\newcommand{\Hq}{\cH_q}
\newcommand{\Hqgg}{\cH_{q,\succeq}}
\DeclareMathOperator{\id}{id}
\DeclareMathOperator{\cone}{cone}
\DeclareMathOperator{\ats}{at}
\DeclareMathOperator{\rre}{Re}

\sloppy

\begin{document}
\title{On the Matricial Truncated Moment Problem. II}

\author{Conrad M\"adler}
\address{University of Leipzig,
Mathematical Institute,
Augustusplatz~10,
04109~Leipzig,
Germany}
\email{maedler@math.uni-leipzig.de}

\author{Konrad Schm\"udgen}
\address{University of Leipzig,
Mathematical Institute,
Augustusplatz~10,
04109~Leipzig,
Germany}
\email{schmuedgen@math.uni-leipzig.de}

\subjclass[2020]{Primary 47A57; Secondary 44A60, 14P10}
\keywords{Truncated matricial moment problem, flat extension theorem, masses, apolar scalar product}

\date{\today}

\begin{abstract}
We continue the study of truncated matrix-valued moment problems begun in \cite{ms}.
Let $q\in \dN$.
Suppose that $(\cX,\fX)$ is a measurable space and $\cE$ is a finite-dimensional vector space of measurable mappings of $\cX$ into $\Hq$, the Hermitian $q\times q$~matrices.
A linear functional $\Lambda$ on $\cE$ is called a moment functional if there exists a positive $\Hq$\nobreakdash-valued measure $\mu$ on $(\cX,\fX)$ such that $\Lambda(F)=\int_\cX \langle F, \dif\mu\rangle$ for $F\in \cE$.

In this paper a number of special topics on the truncated matricial moment problem are treated.
We restate a result from \cite{mours} to obtain a matricial version of the flat extension theorem.
Assuming that $\cX$ is a compact space and all elements of $ \cE$ are continuous on $\cX$ we characterize moment functionals in terms of positivity and obtain an ordered maximal mass representing measure for each moment functional.
The set of masses of representing measures at a fixed point and some related sets are studied. 
The class of commutative matrix moment functionals is investigated.
We generalize the apolar scalar product for homogeneous polynomials to the matrix case and apply this to the matricial truncated moment problem.
\end{abstract}

\maketitle

\section{Introduction}
In the present paper we continue the study of general matricial truncated moment problems which was begun in \cite{ms}.
Our aim is to develop a number of important special topics on moment functionals.

First let us recall the general setup from \cite{ms}.
Let $q$ be a positive integer which will be fixed throughout the whole paper.
We denote by $M_q(\dC)$ the complex $q\times q$~matrices, by $\Hq$ the Hermitian matrices of $M_q(\dC)$, and by $\Hqgg $ the non-negative matrices of $\Hq$.
Further, $(\cX,\fX)$ is a measurable space such that $\{x\}\in \fX$ for all $x\in \cX$ and $\cM_q(\cX,\fX)$ denotes the set of positive $\Hq$\nobreakdash-valued measures on $(\cX,\fX)$.
We suppose that $\cE$ is a \textbf{finite-dimensional} real vector space of measurable mappings of $\cX$ into $\Hq$ and $E$ is a \textbf{finite-dimensional} real vector space of real-valued measurable functions on $\cX$.
If not stated explicitly otherwise, these assumptions and notations will be fixed throughout the present paper without mention.

In most of our considerations the matrix entries of elements of $\cE$ are general measurable functions on $(\cX,\fX)$.
Nevertheless, our main interest is on multi-variate polynomials on semi-algebraic sets.
The matricial moment problem in this case is rarely studied in the literature; we mention only a few papers such as \cite{vasilescu}, \cite{bakony}, \cite{cimpric}, \cite{Kimsey}, \cite{KimseyW}, \cite{Le}, \cite{KimseyT}. 

The following definitions are fundamental for this paper.
A linear functional $\Lambda\colon \cE\to \dR$ is called a \emph{moment functional} on $\cE$ if there exists a measure $\mu\in\cM_q(\cX,\fX)$ such that
\[
     \Lambda(F)
     =\int_\cX \langle F(x), \dif\mu\rangle \quad\text{for all }F\in \cE.
\]
In this case, each such $\mu\in\cM_q(\cX,\fX)$ is called a \emph{representing measure} of $\Lambda$.
We denote by $\cM_\Lambda$ the set of all representing measures of $\Lambda$.
Further, a linear mapping $L\colon E\to\Hq$ is said to be a \emph{matrix moment functional} on $E$ if there is a measure $\mu\in\cM_q(\cX,\fX)$ such that
\[
     L(f)
     =\int_\cX f(x) \, \dif\mu\quad\text{for all }f\in E.
\]
In this case, each such $\mu\in\cM_q(\cX,\fX)$ is called a \emph{representing measure} of $L$. 

Let us sketch the contents of this paper.
Detailed explanations and assumptions are given at the beginning of the corresponding sections.

In Section~\ref{prelim}, we collect some notation and facts that will be used later.

In Section~\ref{hankelmatrices}, we define and investigate block Hankel matrices of linear functionals and develop some of their basic properties. 

A fundamental existence result for the scalar truncated moment problem is the flat extension theorem of R.~E.~Curto and L.~A.~Fialkow \cite{cf1}, \cite{cf2}.
General versions of this result were proved in \cite{mours} for non-commutative $\ast$\nobreakdash-algebras and in \cite{KimseyT} for matrices of polynomials in several variables.
In Section~\ref{flatextension}, we adapt a result obtained in \cite{mours} to derive a flat extension theorem in our general setting (Theorem~\ref{flateth}).

In Sections~\ref{existence} and~\ref{orderded}, we assume that $\cX$ is a compact space and all $F\in \cE$ are continuous on $\cX$.
In Section~\ref{existence} we prove an existence result (Theorem~\ref{compactpos}) in terms of the positivity of the corresponding linear functional.
This result is essentially used in Section~\ref{orderded} to obtain ordered maximal mass representing measures (see Definition~\ref{maxmassmeasure}) for each moment functional on $\cE$ (Theorem~\ref{maxmassmeasure}).

In Section~\ref{masses}, we study the set of possible masses at a fixed point among all representing measures of a moment functional. 
In Section~\ref{largestmasses}, we consider the existence of a largest mass at some atom.
We give a criterion for the existence and we develop a counterexample in detail.

Section~\ref{commuative} deals with matrix moment functionals $L$ on $E$ for which the set $L(E)$ is commutative.
We characterize such functionals by various equivalent conditions (Theorem~\ref{cummcond}).

In Section~\ref{transport}, we investigate how moment functionals and their representing measures behave under positive linear mapping of matrices.

In Section~\ref{apolar}, we generalize the apolar scalar product for homogeneous polynomials to the matrix case and apply this to the matricial truncated moment problem (Theorem~\ref{T19.14}).

\section{Preliminaries}\label{prelim}
Let $\tr$ denote the trace and $\langle \cdot,\cdot\rangle$ the scalar product on $M_q(\dC)$ defined by
\begin{align}\label{defscalar}
    \langle X,Y\rangle
    = \tr(XY^*)\quad\text{for all }X,Y\in M_q(\dC).
\end{align}
(Note that $\langle \cdot,\cdot\rangle$ is linear in the first variable and anti-linear in the second.)
For $X,Y\in \Hqgg$ the scalar product is non-negative, since
\begin{equation}\label{scalarpos}
    \langle X,Y\rangle
    = \tr(XY)
    =\tr(\sqrt{Y} X \sqrt{Y})
    \geq 0.
\end{equation} 
For $ j,k\in\{1,\dotsc,q\}$, let $e_{jk}$ denote the standard matrix unit of $M_q(\dC)$. Further, we abbreviate 
\begin{align*}
    H_{jk}&:=\frac{1}{\sqrt{2}}(e_{jk}+e_{kj})\text{ if }j<k,&
    H_{jj}&:= e_{jj},&
    H_{jk}&:= \frac\im{\sqrt{2}}(e_{jk}-e_{kj})\text{ if }j>k. 
\end{align*}
Then the set $\{H_{jk}\colon j,k=1,\dotsc,q\}$ is an orthonormal basis of the real Hilbert space $(\Hq, \langle\cdot, \cdot\rangle )$.
 
We denote by $M_q(E)$ the $q\times q$~matrices with entries from the complexification $E+\im E$ of $E$ and by $\Hq(E)$ the Hermitian matrices in $M_q(E)$. 

Throughout this paper, we often use the one-to-one correspondence between real-linear mappings $L\colon E\to \Hq$ and real-linear functionals $\Lambda$ on $\Hq(E)$ developed in \cite{ms}, see formula~(3.1) and Lemmas~3.3 and~3.4 therein.
The functional $\Lambda$ associated with $L$, denoted $\Lambda_L$, is defined by 
\begin{equation}\label{defcl}
     \Lambda_L(F)
     :=\sum_{j,k=1}^q\langle L( \langle F,H_{jk}\rangle), H_{jk}\rangle,
     \quad F\in \Hq(E).
\end{equation}
The mapping $L$ can be uniquely recovered from the functional $\Lambda_L$ by 
\begin{equation}\label{lfvv}
    v^*L(f)v
    =\Lambda_L(fvv^*)
    \quad \text{for }f\in E,\, v\in\dC^q,
\end{equation} 
or by 
\begin{equation}\label{lfh}
    \Lambda_L(fH)
    =\langle H, L(f)\rangle
    \quad\text{for }f\in E,\, H\in \Hq.
\end{equation}
Recall from \cite[Proposition~4.5]{ms} that $\Lambda_L$ is a moment functional on $\Hq(E)$ if and only if $L$ is a matrix moment functional on $E$ and that in this case the corresponding sets of representing measures coincide.

Since each $A\in M_q(E)$ can be uniquely written as $A=F+\im G$, with $F,G\in \Hq(E)$, and the scalar product \eqref{defscalar} is linear in the first variable, it follows from \eqref{lfh} that $\Lambda_L$ extends to a complex-linear functional on $M_q(E)$, denoted again $\Lambda_L$, such that
\begin{equation}\label{traceaux}
    \Lambda_L(fA)
    =\langle A, L(f)\rangle
    =\tr(AL(f))
    \quad \text{for all }f\in E, \, A\in M_q(\dC).
\end{equation}
Clearly, then each complex-linear functional $\Lambda$ on $M_q(E)$ is of the form $\Lambda_L$, where $L$ is the unique real-linear mapping $L\colon E\to \cH_q$ satisfying \eqref{lfvv} or \eqref{lfh}.

Next we recall some measure-theoretic facts, see \cite{ms} for more details.
Suppose $\mu\in \cM_q(\cX,\fX)$.
Let $\tau:=\tr \mu$ be the trace measure of $\mu$.
Then there exists a matrix $\Phi(x)=(\phi_{ij}(x))_{i,j=1}^q$ of measurable functions $\phi_{ij}$ on $(\cX,\fX)$ such that $\Phi(x)\in \Hqgg $ for all $x\in \cX$ and $\dif\mu=\Phi(x) \dif\tau$.
A measurable mapping $F\colon \cX\to\cH_q$ on $(\cX,\fX)$ is said to be \emph{$\mu$\nobreakdash-integrable} if $\langle F,\Phi\rangle\in L^1(\tau;\dR)$.
In this case, $\int_{\cX} \langle F,\dif\mu\rangle$ is defined by
\[
    \int_{\cX} \langle F,\dif\mu\rangle
    :=\int_{\cX }\langle F,\Phi\rangle \dif\tau.
\]
Let $L^1(\mu;\cH_q)$ denote the set of all $\mu$\nobreakdash-integrable mappings $F\colon \cX\to \cH_q$.
If $\cE\subseteq L^1(\mu;\cH_q)$, then let $\Lambda^\mu\colon\cE\to\dR$ be defined by
\[
    \Lambda^\mu(F)
    :=\int_\cX \langle F,\dif\mu\rangle.
\]
If $E\subseteq L^1(\tau;\dR)$, then let $L^\mu \colon E\to \cH_q$ be defined by
\[
    L^\mu(f)
    :=\int_\cX f \dif\mu.
\]
 
Let $x\in \cX$ and $M\in \Hqgg $.
Then $M\delta_x$ belongs to $\cM_q(\cX,\fX)$ and fulfills $M\delta_x(X)=M$ if $x\in X$ and $M\delta_x(X)=O$ if $x\notin X$ for $X\in \fX$.
Further, $\ell_x$ denotes the point evaluation at $x$, that is, $\ell_x(F)=F(x)$ for $F\in \cE$.

A measure $\nu\in \cM_q(\cX,\fX)$ is called \emph{finitely atomic} if there exists a finite subset $N$ of $\cX$ such that $\nu(\cX\setminus N)=O$. 
Any such measure $\nu$ is of the form $\nu=\sum_{j=1}^k M_j\delta_{x_j}$, where $x_1,\dotsc,x_k\in\cX$ and $M_1,\dotsc,M_k\in\Hqgg $, and we have
$\int_\cX \langle F,\dif\nu\rangle =\sum_{j=1}^k \tr(F(x_j)M_j)$. \smallskip

The following important result, the matricial Richter--Tchakaloff theorem \cite[Theorem~5.1]{ms}, will be often used in the sequel.

\begin{thm}\label{richterm}
Each moment functional $\Lambda$ on $\cE$ has a finitely atomic representing measure $\nu=\sum_{j=1}^k M_j\delta_{x_j} $, where $x_1,\dotsc,x_k\in \cX$, $M_1,\dotsc,M_k\in\Hqgg $ and $k\leq \dim \cE$, that is, we have
\[
     \Lambda (F)
     =\sum_{j=1}^k \tr(F(x_j)M_j)\quad\text{for all }F\in \cE.
\]
\end{thm}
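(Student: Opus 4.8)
The plan is to reduce the matricial statement to the classical scalar Richter--Tchakaloff theorem by flattening the matrix-valued measure through its trace measure. Let $\mu\in\cM_\Lambda$ be a representing measure and write, as recalled in the preliminaries, $\tau:=\tr\mu$ and $\dif\mu=\Phi\,\dif\tau$ with a measurable matrix function $\Phi\colon\cX\to\Hqgg$. Then for every $F\in\cE$,
\[
    \Lambda(F)
    =\int_\cX\langle F,\dif\mu\rangle
    =\int_\cX\langle F(x),\Phi(x)\rangle\,\dif\tau(x)
    =\int_\cX\tr\bigl(F(x)\Phi(x)\bigr)\,\dif\tau(x),
\]
and the hypothesis $F\in\cE\subseteq L^1(\mu;\Hq)$ means precisely that $g_F:=\langle F(\cdot),\Phi(\cdot)\rangle\in L^1(\tau;\dR)$.

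Next I would pass to the finite-dimensional space of scalar functions. Fix a basis $F_1,\dotsc,F_n$ of $\cE$, where $n:=\dim\cE$, and set $g_i:=g_{F_i}\in L^1(\tau;\dR)$. The scalar Richter--Tchakaloff theorem, applied to $g_1,\dotsc,g_n$ with the representing measure $\tau$, yields finitely many points $x_1,\dotsc,x_k\in\cX$ and weights $c_1,\dotsc,c_k\geq 0$ with $k\leq n$ such that $\int_\cX g_i\,\dif\tau=\sum_{j=1}^k c_j\,g_i(x_j)$ for all $i$; by linearity the same identity then holds for $g_F$ with arbitrary $F\in\cE$.

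Finally I would translate this back into a matrix-valued measure. Put $M_j:=c_j\Phi(x_j)$; since $c_j\geq 0$ and $\Phi(x_j)\in\Hqgg$, each $M_j$ lies in $\Hqgg$, and $\nu:=\sum_{j=1}^k M_j\delta_{x_j}$ is a finitely atomic measure in $\cM_q(\cX,\fX)$ (here the standing assumption $\{x\}\in\fX$ guarantees that $\nu$ is well defined). For every $F\in\cE$,
\[
    \sum_{j=1}^k\tr\bigl(F(x_j)M_j\bigr)
    =\sum_{j=1}^k c_j\tr\bigl(F(x_j)\Phi(x_j)\bigr)
    =\sum_{j=1}^k c_j\,g_F(x_j)
    =\int_\cX g_F\,\dif\tau
    =\Lambda(F),
\]
so $\nu$ represents $\Lambda$ with $k\leq\dim\cE$ atoms, as claimed.

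The reduction itself is essentially bookkeeping; the substantive input is the scalar Richter--Tchakaloff theorem, and in particular its sharp atom bound $k\leq n$ (rather than $n+1$). The point I would expect to check with care is measurability: that $\Phi$ can be chosen measurable with values in $\Hqgg$ and that each $g_F$ is $\tau$\nobreakdash-integrable, so that the scalar theorem genuinely applies. In a direct approach the corresponding obstacle is that the convex cone generated by the point functionals $F\mapsto\tr(F(x)M)$, with $x\in\cX$ and $M\in\Hqgg$, need not be closed, so one cannot simply invoke Carath\'eodory's theorem on the integral $\Lambda=\int_\cX\tr(F(x)\Phi(x))\,\dif\tau$; it is exactly Richter's measure-theoretic argument that converts this integral into a finite sum while keeping the number of atoms down to $\dim\cE$.
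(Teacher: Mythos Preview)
Your argument is correct. The paper itself does not prove this theorem---it is quoted as \cite[Theorem~5.1]{ms}---but the proof of Proposition~\ref{L0836}, implication~\ref{L0836.iv}$\to$\ref{L0836.iii}, reveals that the argument in \cite{ms} produces masses of exactly the form $M_j=m_j\Phi(y_j)$ (see the reference to formula~(5.5) of \cite{ms}); this is precisely your construction, so your reduction to the scalar Richter--Tchakaloff theorem via the trace measure and Radon--Nikodym matrix $\Phi$ coincides with the approach of \cite{ms}.
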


For the next lemma we could not find an explicit reference in the literature; therefore we include a complete proof.
The special case stated as Corollary~\ref{totallycor} is essentially used in Sections~\ref{masses} and~\ref{orderded} below via Lemma~\ref{auxl}.

\begin{lem}\label{totallyord}
Let $(\cH,\langle \cdot,\cdot\rangle)$ be a complex Hilbert space and let $B(\cH)_+$ denote the bounded positive self-adjoint operators on $\cH$ equipped with the usual order relation ``\/$\leq$''.
Suppose that $\cN$ is a non-empty totally ordered set of $B(\cH)_+$ which is bounded from above (that is, there is a constant $c>0$ such that $A\leq c\cdot I$ for all $A\in \cN$).
Then there exist an operator $N_0\in B(\cH)_+$ and a net $(N_j)_{j\in J}$ from $\cN$ such that $A\leq N_0$ for all $A\in \cN$ and $N_0=\lim_j N_j$ in the weak operator topology of $\cH$ (that is, $\langle N_0\varphi,\psi\rangle =\lim_j\langle N_j\varphi,\psi\rangle$ for all $\varphi,\psi\in \cH$).
\end{lem}
\begin{proof}
For $\varphi\in \cH$ we define
\[
    \gs(\varphi)
    :=\sup \{\langle A\varphi,\varphi\rangle\colon A\in \cN \}.
\]
Since $\cN$ is bounded from above and $\cN\subseteq B(\cH)_+$, $\gs(\varphi)$ is finite and we have
\begin{equation}\label{sbounded}
    \gs(\varphi)
    =\lvert\gs(\varphi)\rvert
    \leq c\lVert\varphi\rVert^2
    \quad\textit{for all }\varphi\in \cH.
\end{equation}
Without loss of generality we can assume that $A\geq I$ for all $A\in \cN$.
Then $\varphi\mapsto \langle A\varphi,\varphi\rangle^{1/2}=\lVert A^{1/2}\varphi\rVert$ is a norm, so that $\varphi\mapsto \gs(\varphi)^{1/2}$ is a norm on $\cH$.

For any $A\in\cN$ the norm $\varphi\mapsto\langle A \varphi,\varphi\rangle^{1/2}$ on $\cH$ satisfies the parallelogram identity
\begin{equation}\label{paral}
    \langle A(\varphi+\psi),\varphi+\psi\rangle +\langle A(\varphi-\psi),\varphi-\psi\rangle
    = 2(\langle A\varphi,\varphi\rangle+\langle A\psi,\psi\rangle).
\end{equation}
Fix $\varphi,\psi\in \cH$ and let $\varepsilon >0$.
Since $\cN$ is totally ordered (!), there exists an operator $A_\varepsilon\in \cN$ such that
\begin{gather*}
    \lvert\gs(\varphi\pm\psi)-\langle A_\varepsilon (\varphi\pm\psi),\varphi\pm \psi\rangle\rvert\leq \varepsilon,\\ 
    \lvert\gs(\varphi)-\langle A_\varepsilon \varphi,\varphi\rangle\rvert\leq \varepsilon,\quad\lvert\gs(\psi)-\langle A_\varepsilon \psi,\psi\rangle\rvert\leq \varepsilon.
\end{gather*} 
Since $\varepsilon>0$ is arbitrary, it follows from \eqref{paral} that the norm $\varphi\mapsto \gs(\varphi)^{1/2}$ satisfies the parallelogram identity as well.
Therefore, by a classical result of J.~v.~Neumann (see e.\,g.\ \cite[Proposition~1.6]{weid}), this norm comes from a scalar product, denoted $\gs(\cdot,\cdot)$, which is given by the polarization identity
\begin{equation}\label{polarid}
    4\gs (\varphi,\psi)
    = \gs( \varphi+\psi) - \gs(\varphi-\psi) + \im \gs(\varphi+\im \psi) - \im \gs(\varphi-\im \psi).
\end{equation}
Then $\gs(\varphi,\varphi)=\gs(\varphi)$ for $\varphi\in \cH$.
Thus, by \eqref{sbounded}, $\gs(\cdot,\cdot)$ is a bounded positive sesquilinear form on $\cH$.
Hence there exists an operator $N_0\in B(\cH)_+$ such that $\gs(\varphi,\psi)=\langle N_0\varphi,\psi\rangle $ for all $\varphi,\psi\in \cH$.

Now fix $\varphi,\psi\in \cH$.
Using once more that $\cN$ is totally ordered, for any $\varepsilon>0$ we can find an operator $N_\varepsilon\in \cN$ such that
\begin{align*}
    \lvert\gs(\varphi\pm\psi)-\langle N_\varepsilon (\varphi\pm\psi),\varphi\pm \psi\rangle\rvert&\leq \varepsilon&
    &\text{and}&
    \lvert\gs(\varphi\pm\im\psi)-\langle N_\varepsilon (\varphi\pm \im \psi,\varphi\pm\im \psi\rangle\rvert&\leq \varepsilon.
\end{align*}
Therefore, by \eqref{polarid} and the polarization identity for the operator $N_\varepsilon$ this implies that 
\[
    \lvert\langle N_0\varphi,\psi\rangle-\langle N_\varepsilon\varphi,\psi\rangle\rvert
    =\lvert\gs(\varphi,\psi)-\langle N_\varepsilon\varphi,\psi\rangle\rvert
    \leq \varepsilon.
\]
From this it follows that $N_0$ is the limit in the weak operator topology of a net from $\cN$.
\end{proof}

\begin{cor}\label{totallycor}
Let $\cN$ be a subset of $\Hqgg $ which is totally ordered and bounded from above with respect the order relation ``\/$\succeq$''.
Then there exist a matrix $N_0\in\Hqgg $ and a sequence $(N_n)_{n\in \dN}$ of elements of $\cN$ such that $N_0\succeq N$ for all $N\in \cN$ and $N_0=\lim_n N_n$ in the norm topology of $\Hq$. 
\end{cor}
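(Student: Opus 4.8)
The plan is to obtain the corollary as the finite-dimensional special case of Lemma~\ref{totallyord}, applied to the Hilbert space $\cH=\dC^q$. Here $B(\cH)_+$ is exactly $\Hqgg$ and the operator order ``$\leq$'' coincides with ``$\succeq$'', so the conclusion of Lemma~\ref{totallyord} already gives almost everything we want. Only two discrepancies must be reconciled: Lemma~\ref{totallyord} delivers (i) convergence merely in the weak operator topology, and (ii) a net rather than a sequence.

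First I would verify the hypotheses of Lemma~\ref{totallyord}, tacitly assuming $\cN\neq\emptyset$ (otherwise the assertion is void, since no sequence of elements of $\cN$ exists). By assumption $\cN$ is totally ordered and possesses an upper bound $M\in\Hq$ with $M\succeq N$ for all $N\in\cN$. Putting $c:=\max\{\lVert M\rVert,1\}>0$, where $\lVert\cdot\rVert$ denotes the operator norm, we obtain $N\preceq M\preceq\lVert M\rVert\cdot I\preceq c\cdot I$ for every $N\in\cN$, so $\cN$ is bounded from above by $c\cdot I$. Lemma~\ref{totallyord} then furnishes a matrix $N_0\in\Hqgg$ and a net $(N_j)_{j\in J}$ from $\cN$ such that $N\preceq N_0$ for all $N\in\cN$ and $N_0=\lim_j N_j$ in the weak operator topology.

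It remains to upgrade the mode of convergence. For (i) I would use that on the finite-dimensional space $M_q(\dC)$ all Hausdorff vector-space topologies coincide, so the weak operator topology agrees with the norm topology, and hence $N_j\to N_0$ in norm. For (ii), since $\Hq$ is a finite-dimensional, and therefore metrizable, normed space, the existence of a net from $\cN$ with limit $N_0$ shows that $N_0$ lies in the norm closure of $\cN$; metrizability then yields a genuine sequence $(N_n)_{n\in\dN}$ from $\cN$ with $N_0=\lim_n N_n$ in norm. Combined with the inequality $N\preceq N_0$ already established, this gives the assertion.

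I do not expect a serious obstacle, as the substance is carried entirely by Lemma~\ref{totallyord}; the only point requiring care is that the replacement of ``net in the weak operator topology'' by ``sequence in the norm topology'' is legitimate, and this is precisely where finite-dimensionality (for the coincidence of topologies) and metrizability (for the passage from nets to sequences) are indispensable. As an alternative that bypasses Lemma~\ref{totallyord} entirely, I could argue directly: the trace $\tr$ is monotone along the total order, so choosing $N_n\in\cN$ with $\tr N_n$ increasing to $s:=\sup_{N\in\cN}\tr N$ and using $\lVert A\rVert\leq\tr A$ for $A\in\Hqgg$ renders $(N_n)$ norm-Cauchy; a short comparison argument, again exploiting total order, then identifies its limit $N_0$ as an upper bound of $\cN$.
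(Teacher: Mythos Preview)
Your proposal is correct and follows exactly the paper's own argument: apply Lemma~\ref{totallyord} with $\cH=\dC^q$, then use finite-dimensionality to pass from the weak operator topology to the norm topology and from a net to a sequence. The paper's proof is a terse two-sentence version of precisely this reasoning; your additional care in verifying the upper bound $c\cdot I$ and spelling out the metrizability step, as well as the alternative trace-based argument, go beyond what the paper records but are entirely in its spirit.
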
 
\begin{proof}
The assertion follows from Lemma~\ref{totallyord}, applied to the matrices of $\cN$ acting as operators on the Hilbert space $\cH=\dC^q$.
Then the order relation ``$\succeq$'' corresponds to ``$\geq$''.
In this case the weak operator topology is given by a norm, so we can choose a sequence from $\cN$ rather than a net. 
\end{proof}

\section{Hankel matrices}\label{hankelmatrices}
Let $n\in \dN$.
We denote by $E_n$ the real vector space spanned by all products $g_1\dotsm g_n$, where $g_j\in E$ or $g_j=1$ for $j=1,\dotsc,n$.
Further, let $M_q(E_n)$ be the $q\times q$~matrices with entries from the complex vector space $E_n+\im E_n$.
Then $M_q(E_n)$ is a complex vector space equipped with an involution given by $(a_{jk})^*:= (\,\overline{a_{kj}}\,)$ for $(a_{jk})\in M_q(E_n)$.
Since $1\in E_n$, $M_q(E_n)$ contains the unit matrix.
Clearly, we have
\[
    M_q(E_k) \cdot M_q(E_\ell)
    \subseteq M_q(E_{k+\ell})
    \quad\text{for all }k,\ell\in \dN.
\]

Now we fix a basis $\{f_1,\dotsc,f_d\}$ of the real vector space $E+\dR\cdot 1$.
For $\gk=(k_1,\dotsc,k_d)\in \dN_0^d$ we set $f^\gk:=f_1^{k_1}\dotsm f_d^{k_d}$ and $\lvert\gk\rvert :=k_1+\dotsb+k_d$.
Further, we abbreviate $I(d,n):=\{ \gk\in \dN_0^d\colon \lvert\gk\rvert \leq n\}$ for $n\in \dN$.

Clearly, $E_n$ is the complex linear span of the set $\{f^\gk\colon \gk\in I(d,n)\}$.
Note that all functions $f^\gk$ on $\cX$ are real-valued.

Each matrix $A\in M_q(E_n)$ can be written in the form 
\begin{align*}
    A
    =\sum_{\gk\in I(d,n)} f^\gk A_\gk,
    \quad\text{where }A_\gk\in M_q(\dC)\text{ for }\gk\in I(d,n).
\end{align*}
Now we fix an ordering of the index set $I(d,n)$ (for instance, the lexicographic ordering) and arrange the coefficients of $A\in M_q(E_n)$ as a column vector, denoted by $\vec A=(A_\gk)$, according to this ordering.

\begin{dfn}\label{D0912}
Let $\Lambda$ be a complex-linear functional on $M_q(E_{2n})$.
The \emph{Hankel matrix} $H_n(\Lambda)$ of $\Lambda$ is the $\binom{d+n}{n}\times\binom{d+n}{n}$\nobreakdash~block matrix defined by
\[
    H_n(\Lambda)
    :=(h_{\gk\gl}:=L(f^{\gk+\gl}))_{\gk,\gl\in I(d,n)},
\]
where $L\colon E_{2n}\to \Hq$ is the unique real-linear mapping given by \eqref{lfvv}, with $\Lambda_L$ replaced by $\rre\Lambda$ and $E$ replaced by $E_{2n}$.
\end{dfn}

Note that (in contrast to other places of the literature, see e.\,g.\ \cite{KimseyT}) the entries $h_{\gk\gl}=L(f^{\gk+\gl})$ of this Hankel matrix $H_n(\Lambda)$ are matrices of $\Hq$.
We consider the block matrix $H_n(\Lambda)$ written as a matrix with respect to the fixed ordering of the index set $I(d,n)$.
Then a product such as $\vec {B}^*H_n(\Lambda) \vec{A}$, with $A,B\in M_q(E_n)$, is well defined and gives a matrix of $M_q(\dC)$.

The following two lemmas contain matrix-valued counterparts of the corresponding assertions in the scalar case, see e.\,g.\ \cite[Proposition~17.17]{sch17}.

\begin{lem}
For $ A=\sum_{\gl\in I(d,n)} f^\gl A_\gl, B=\sum_{\gk\in I(d,n)} f^\gk B_\gk \in M_q(E_{n})$, we have
\begin{equation}\label{lambdahankel}
    \Lambda(AB^*)
    =\tr(\vec {B}^*H_n(\Lambda)\vec{A}).
\end{equation}
\end{lem}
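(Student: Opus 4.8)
The plan is to expand the product $AB^*$ into a single element of $M_q(E_{2n})$, apply the complex-linear functional $\Lambda$ termwise, and then reorganize the resulting scalar sum as the trace of the block-matrix product on the right-hand side. First I would compute $AB^*$ explicitly. Since the functions $f^\gk$ are real-valued, the involution on $M_q(E_n)$ acts on $B=\sum_{\gk} f^\gk B_\gk$ by $B^*=\sum_{\gk} f^\gk B_\gk^*$, where $B_\gk^*$ is the adjoint in $M_q(\dC)$; no conjugates of the scalar functions appear. Because the scalar functions commute and $f^\gl f^\gk=f^{\gk+\gl}$, multiplying out gives
\[
    AB^*=\sum_{\gk,\gl\in I(d,n)} f^{\gk+\gl}\,A_\gl B_\gk^*,
\]
an element of $M_q(E_{2n})$ since $f^{\gk+\gl}\in E_{2n}$, and each summand is a real-valued function $f^{\gk+\gl}$ times the constant matrix $A_\gl B_\gk^*\in M_q(\dC)$.

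The key step is then to apply $\Lambda$ and invoke the fundamental trace relation \eqref{traceaux}. By complex-linearity of $\Lambda$,
\[
    \Lambda(AB^*)=\sum_{\gk,\gl} \Lambda\big(f^{\gk+\gl}\,A_\gl B_\gk^*\big).
\]
Applying \eqref{traceaux} with $f=f^{\gk+\gl}\in E_{2n}$ (real-valued, so that the $L$ built from $\rre\Lambda$ in Definition~\ref{D0912} is exactly the map attached to $\Lambda$ via the correspondence) and with $A=A_\gl B_\gk^*$ converts each summand into $\tr\big(A_\gl B_\gk^*\,L(f^{\gk+\gl})\big)=\tr\big(A_\gl B_\gk^*\,h_{\gk\gl}\big)$, where $h_{\gk\gl}=L(f^{\gk+\gl})$ are precisely the blocks of $H_n(\Lambda)$.

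Finally I would match this double sum with the block-matrix trace. Writing out the block product, the $\gk$-th block of $H_n(\Lambda)\vec A$ is $\sum_\gl h_{\gk\gl}A_\gl$, so $\vec B^* H_n(\Lambda)\vec A=\sum_{\gk,\gl} B_\gk^* h_{\gk\gl}A_\gl\in M_q(\dC)$. Taking the trace and using its cyclicity gives $\tr(\vec B^* H_n(\Lambda)\vec A)=\sum_{\gk,\gl}\tr(A_\gl B_\gk^* h_{\gk\gl})$, which is exactly the expression obtained for $\Lambda(AB^*)$, and this establishes \eqref{lambdahankel}.

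The computation is essentially bookkeeping; the only points requiring care are the correct action of the involution together with the real-valuedness of the $f^\gk$ (so that $f^\gl f^\gk=f^{\gk+\gl}$ and the scalar functions are not conjugated), and the consistent index conventions identifying the block product $\vec B^* H_n(\Lambda)\vec A$ with the double sum $\sum_{\gk,\gl}B_\gk^* h_{\gk\gl}A_\gl$ under cyclicity of the trace. I expect the main potential pitfall to be keeping the order of the matrix factors straight through the involution and the trace, rather than any genuine conceptual difficulty.
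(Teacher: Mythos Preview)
Your proposal is correct and follows essentially the same approach as the paper: expand $AB^*$ using the real-valuedness of the $f^\gk$, apply $\Lambda$ termwise via the trace identity \eqref{traceaux}, and then recognize the resulting double sum as $\tr(\vec B^* H_n(\Lambda)\vec A)$ after a cyclic permutation. The paper's proof is a compressed one-line computation doing exactly these steps.
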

\begin{proof}
Applying formula \eqref{traceaux} we compute
\[\begin{split}
    &\Lambda(AB^*)
    =\Lambda\left(\sum_{\gl,\gk\in I(d,n)}(f^\gl A_\gl)(f^\gk B_\gk)^*\right)
    = \sum_{\gl,\gk\in I(d,n)} \Lambda(f^{\gl+\gk}A_\gl B_\gk^*)\\
    &=\sum_{\gl,\gk\in I(d,n)} \tr(A_\gl B_\gk^* L(f^{\gl+\gk})) 
    =\tr \left(\sum_{\gk,\gl\in I(d,n)}B_\gk^* L(f^{\gk+\gl})A_\gl\right)
    =\tr(\vec {B}^*H_n(\Lambda) \vec{A}).\qedhere
\end{split}\]
\end{proof}

\begin{lem}\label{poshankel}
$H_n(\Lambda)\succeq O$ if and only if $\Lambda(A^*A)\geq 0$ for all $A\in M_q(E_n)$.
\end{lem}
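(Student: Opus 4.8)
The plan is to derive both implications from the identity \eqref{lambdahankel} established in the preceding lemma. First I would note that $M_q(E_n)$ is closed under the involution $*$, so for $A\in M_q(E_n)$ the matrix $A^*$ again lies in $M_q(E_n)$ and \eqref{lambdahankel} may be applied with both of its arguments taken to be $A^*$. Since $(A^*)^*=A$, this specialization reads $\Lambda(A^*A)=\tr\bigl((\vec{A^*})^*\,H_n(\Lambda)\,\vec{A^*}\bigr)$. Writing $W:=\vec{A^*}$ and regarding this column of $q\times q$ blocks as a single $\binom{d+n}{n}q\times q$ matrix, I obtain the key formula
\[
    \Lambda(A^*A)=\tr\bigl(W^*H_n(\Lambda)W\bigr).
\]
As $A$ runs through $M_q(E_n)$, the assignment $A\mapsto A^*\mapsto\vec{A^*}=W$ is a bijection onto the set of all such matrices $W$ (it is the composition of the involution with the coefficient bijection $A\mapsto\vec A$); this surjectivity is what drives the harder direction.

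For the implication $H_n(\Lambda)\succeq O\Rightarrow\Lambda(A^*A)\ge0$, I would factor $H_n(\Lambda)=C^*C$. Then the key formula gives $\Lambda(A^*A)=\tr\bigl((CW)^*(CW)\bigr)\ge0$ for every $A$, since $(CW)^*(CW)$ is positive semidefinite.

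For the converse I would first record that $H_n(\Lambda)$ is Hermitian: its blocks $h_{\gk\gl}=L(f^{\gk+\gl})$ lie in $\Hq$ and satisfy $h_{\gk\gl}=h_{\gl\gk}$ because $\gk+\gl=\gl+\gk$, which is exactly the block-Hermiticity condition $h_{\gk\gl}^*=h_{\gl\gk}$. Assuming now $\Lambda(A^*A)\ge0$ for all $A$, the key formula together with the surjectivity noted above yields $\tr\bigl(W^*H_n(\Lambda)W\bigr)\ge0$ for every $\binom{d+n}{n}q\times q$ matrix $W$. To pass from this trace positivity to ordinary positive semidefiniteness, I would test against an arbitrary vector $v\in\dC^{\binom{d+n}{n}q}$ by choosing $W$ to be the matrix whose first column equals $v$ and whose remaining columns vanish; then $W^*H_n(\Lambda)W$ has $v^*H_n(\Lambda)v$ in its $(1,1)$ entry and zeros elsewhere, so $\tr\bigl(W^*H_n(\Lambda)W\bigr)=v^*H_n(\Lambda)v\ge0$. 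Since $v$ is arbitrary and $H_n(\Lambda)$ is Hermitian, this gives $H_n(\Lambda)\succeq O$.

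The only genuinely delicate step is this last one: the hypothesis controls only the trace of the $q\times q$ matrix $W^*H_n(\Lambda)W$, not that matrix itself, so one must engineer a choice of the matrix variable $W$ that isolates a single scalar quadratic form $v^*H_n(\Lambda)v$. The single-nonzero-column trick accomplishes exactly this; the remaining ingredients are the bookkeeping with \eqref{lambdahankel} and the bijectivity of $A\mapsto\vec{A^*}$.
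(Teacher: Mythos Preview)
Your proof is correct and follows essentially the same route as the paper's: both directions rest on the specialization of \eqref{lambdahankel} with $B=A^*$, and your ``single nonzero column'' choice of $W$ is precisely the paper's construction of matrices $A_\gk$ with $A_\gk^*v_1=u_\gk$ and $A_\gk^*v_j=0$ for $j\ge2$, viewed as a block column. One small caveat: the functions $f^\gk$ need not be linearly independent (only a spanning set for $E_n$), so $A\mapsto\vec{A^*}$ is not literally a well-defined bijection; however your argument only requires that every block column $W$ arises from \emph{some} representation of some $A^*\in M_q(E_n)$, which is immediate by taking $A^*:=\sum_\gk f^\gk W_\gk$.
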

\begin{proof} 
Let $(\cdot,\cdot)$ denote the standard scalar product of $\dC^q$.
Let $A\in M_q(E_n)$.
Then $B:=A^*$ belongs to $M_q(E_n)$.
We consider the matrix $\vec{B}^* H_n(\Lambda) \vec{B}$ acting on $\dC^q$.
For $v\in \dC^q$, we compute
\begin{equation}\label{HLambda0}
\begin{split}
     (\vec{B}^*H_n(\Lambda) \vec{B}v,v)
     &=\sum_{\gk,\gl\in I(d,n)} (B_\gk^* L(f^{\gk+\gl})B_\gl v,v)\\
     &=\sum_{\gk,\gl\in I(d,n)} (L(f^{\gk+\gl})B_\gl v,B_\gk v)
     =\sum_{\gk,\gl\in I(d,n)} (h_{\gk\gl}B_\gl v,B_\gk v).
\end{split}
\end{equation}
From \eqref{HLambda0} we see that if $H_n(\Lambda)\succeq O$, then $\vec {B}^* H_n(\Lambda) \vec{B}\succeq O$ and therefore $\Lambda(A^*A)=\Lambda(BB^*)=\tr(\vec {B}^*H_n(\Lambda) \vec{B})\geq 0$ by \eqref{lambdahankel}.

To prove the converse direction we first note that the Hankel matrix $H_n(\Lambda)$ acts on the direct sum of $\binom{d+n}{n}$ copies of $\dC^q$.
Let $u_\gk$, where $ \gk\in I(d,n)$, be arbitrary vectors of $\dC^q$.
Further, let $\{v_1,\dotsc,v_q\}$ be an orthonormal basis of $\dC^q$.
For each $\gk\in I(d,n)$ we choose a matrix $A_\gk\in M_q(\dC)$ such that 
\begin{align}\label{agk}
     A_\gk^* v_1&=u_\gk\quad
     \text{and}\quad 
     A_\gk^* v_j=0\quad\text{for }j\geq 2.
\end{align} 
Then, using \eqref{lambdahankel}, the definition of the trace, \eqref{HLambda0} and finally \eqref{agk} we derive
\[\begin{split}
    \Lambda(A^*A)
    &=\Lambda(BB^*)
    =\tr(\vec {B}^*H_n(\Lambda) \vec{B})
    =\sum_{j=1}^q (\vec{B}^*H_n(\Lambda) \vec{B}v_j,v_j)\\
    &=\sum_{j=1}^q\sum_{\gk,\gl\in I(d,n)} (h_{\gk\gl}B_\gl v_j,B_\gk v_j)
    =\sum_{j=1}^q\sum_{\gk,\gl\in I(d,n)} (h_{\gk\gl}A_\gl^* v_j,A_\gk^* v_j)\\
    &=\sum_{\gk,\gl\in I(d,n)} (h_{\gk\gl}A_\gl^* v_1,A_\gk^* v_1)
    =\sum_{\gk,\gl\in I(d,n)} (h_{\gk\gl}u_\gl,u_\gk).
\end{split}\]
Therefore, if $\Lambda(A^*A)\geq 0$ for all $A\in M_q(E_n)$, then $H_n(\Lambda)\succeq O$.
\end{proof}

\section{The matricial flat extension theorem}\label{flatextension}
First we recall some basic definitions and facts from \cite[Section~2]{mours}.
Let $m\in \dN$.

\begin{dfn}\label{defflatfunc}
A linear functional $\Lambda$ on $M_q(E_{2m+2})$ is called \emph{positive} if
\[
    \Lambda(A^*A)
    \geq 0
    \quad\text{for all }A\in M_q(E_{m+1}).
\]
\end{dfn}

Lemma~\ref{poshankel} gives a reformulation of the positivity of the functional $\Lambda$ in terms of the Hankel matrix:
\emph{$\Lambda$ is positive if and only if $H_{m+1}(\Lambda)\succeq O$.} 

\begin{dfn}\label{D0934}
A linear functional $\Lambda$ on $M_q(E_{2m+2})$ is said to be a \emph{flat extension} with respect to $M_q(E_m)$ if
\[
    M_q(E_{m+1})
    =M_q(E_{m}) + \ker_{m+1} \Lambda,
\]
where 
\[
    \ker_{m+1} \Lambda\colon
    = \left\{A \in M_q(E_{m+1})\colon \Lambda (B^*A) = 0\text{ for all }B\in M_q(E_{m+1}) \right\}.
\]
\end{dfn}
 
If the functional $\Lambda$ is positive, then by \cite[Lemma~2.2]{mours} we have
\[
    \ker_{m+1} \Lambda
    =\left\{A \in M_q(E_{m+1}) \colon \Lambda (A^* A) = 0\right\}.
\]
 
As usual, flat extensions can be characterized in terms of Hankel matrices:
\emph{A linear functional $\Lambda$ on $M_q(E_{2m+2})$ is a flat extension with respect to $M_q(E_m)$ if and only if}
\[
    \ran H_{m+1}(\Lambda)=\ran H_{m}(\Lambda).
\]
This follows by combining Proposition~2.1 and Definition~2.3 in \cite{mours}, applied to $\cC:=M_q(E_{m+1})$ and $\cB:=M_q(E_{m})$ in the notation used therein.

The following theorem is a matricial version of the flat extension theorem.

\begin{thm}\label{flateth}
Let $m\in \dN$ and let $\Lambda$ be a linear functional on $M_q(E_{2m+2})$.
Suppose $\Lambda$ is positive (or equivalently, $H_{m+1}(\Lambda)\succeq O$) and $\Lambda$ is a flat extension with respect to $M_q(E_m)$ (or equivalently, $\ran  H_{m+1}(\Lambda)=\ran  H_{m}(\Lambda))$.
Then there exist points $x_1,\dotsc,x_n\in \cX$ and matrices $M_1,\dotsc,M_n\in \Hqgg $ such that
\[
    \Lambda(A)
    =\sum_{i=1}^n \tr(A(x_i)M_i)
    \quad\text{for all }A \in M_q(E_{2m+2}).
\]
That is, $\Lambda$ is a moment functional on $M_q(E_{2m+2})$ with representing measure $\nu=\sum_{j=1}^n M_j\delta_{x_j}$.
\end{thm}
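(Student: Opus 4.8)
The plan is to reduce the matricial flat extension theorem to the scalar-valued (or rather $\ast$-algebra) flat extension result of \cite{mours}, which the excerpt explicitly invokes, and then to convert the abstract representing functional into a finitely atomic matrix-valued measure. The key structural observation is that $\Lambda$ is a positive functional on the $\ast$-vector space $\cC = M_q(E_{m+1})$ satisfying the flatness condition $M_q(E_{m+1}) = M_q(E_m) + \ker_{m+1}\Lambda$, which by the discussion after Definition~\ref{D0934} is equivalent to $\ran H_{m+1}(\Lambda) = \ran H_m(\Lambda)$. The framework of \cite{mours} (via Proposition~2.1 and Definition~2.3 therein, applied to $\cC = M_q(E_{m+1})$ and $\cB = M_q(E_m)$) should yield that such a flat positive functional extends uniquely to a positive functional on the full $\ast$-algebra generated by $M_q(E_{m+1})$, equivalently that the Hankel rank stabilizes and $\Lambda$ admits a representing object carried by a finite set.

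First I would set up the GNS-type construction adapted to the matrix case: form the quotient space $M_q(E_{m+1})/\ker_{m+1}\Lambda$, equipped with the (now definite) sesquilinear form induced by $(A,B) \mapsto \Lambda(B^*A)$. By Lemma~\ref{poshankel} positivity of $\Lambda$ is exactly $H_{m+1}(\Lambda) \succeq O$, so this form is positive semidefinite and descends to an honest inner product on the quotient, which is finite-dimensional since $M_q(E_{m+1})$ is. The flatness hypothesis guarantees that multiplication operators by the coordinate functions $f_1,\dotsc,f_d$ are well-defined on the quotient and, crucially, that they leave the image of $M_q(E_m)$ invariant up to the kernel, so the abstract multiplication operators are bounded commuting self-adjoint operators on this finite-dimensional Hilbert module. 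This is precisely the situation handled in \cite{mours}, and I would quote their theorem to obtain the extension of $\Lambda$ to a positive functional that is a genuine moment functional on the algebra.

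Next I would extract the atoms and masses. The commuting self-adjoint multiplication operators admit a joint spectral decomposition; the joint eigenvalues $(\lambda_1^{(i)},\dotsc,\lambda_d^{(i)})$ determine candidate evaluation points, and I would use the assumption that the $f_j$ separate points of (the relevant part of) $\cX$ — together with the fact that the point evaluations $\ell_{x}$ lie in the appropriate dual — to realize each joint eigenvalue tuple as $(f_1(x_i),\dotsc,f_d(x_i))$ for some genuine $x_i \in \cX$. The spectral projections, compressed through the correspondence \eqref{lfvv}–\eqref{traceaux} between functionals on $M_q(E)$ and $\Hq$-valued maps, produce the matrices $M_i \in \Hqgg$. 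Writing $\nu := \sum_i M_i \delta_{x_i}$, the finitely atomic formula $\int_\cX \langle F, \dif\nu\rangle = \sum_i \tr(F(x_i)M_i)$ recorded in the preliminaries then gives the claimed representation $\Lambda(A) = \sum_i \tr(A(x_i)M_i)$, first for $A \in \Hq(E_{2m+2})$ and then, by complex-linearity as in \eqref{traceaux}, for all $A \in M_q(E_{2m+2})$.

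The main obstacle I anticipate is the step of realizing the abstract joint spectrum inside $\cX$ and producing genuine points $x_i \in \cX$ rather than merely abstract characters of the algebra. In the purely algebraic $\ast$-algebra setting of \cite{mours} one obtains a representing measure on the character space, and transporting it to the concrete measurable space $(\cX,\fX)$ requires that the coordinate functions $f^\gk$ actually come from evaluations at points of $\cX$, i.e.\ that the relevant characters are point evaluations $\ell_{x_i}$. Handling this identification carefully — ensuring the eigenvalue data is consistent with the algebraic relations satisfied by the $f^\gk$ on $\cX$, so that each character is realized by a point — is the delicate part; the positivity and matrix bookkeeping via $H_{m+1}(\Lambda) \succeq O$ and the correspondence \eqref{traceaux} are comparatively routine once the scalar result is in hand.
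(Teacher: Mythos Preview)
Your proposal is essentially the paper's own argument: both reduce directly to the general flat extension theorem of \cite{mours} via the GNS quotient $M_q(E_{m+1})/\ker_{m+1}\Lambda$ and the joint spectral decomposition of the commuting multiplication operators, following the pattern of \cite[Proposition~4.3]{mours}, and both flag the identification of the abstract characters with genuine points $x_i\in\cX$ as the only substantive step beyond \cite{mours}. The one detail the paper makes more explicit than you do is that the masses come out as rank-one matrices $M_i=u_iu_i^*$, where the $u_i\in\dC^q$ are the components of the cyclic vector in the joint eigenspaces, rather than via a general ``compression of spectral projections'' through \eqref{lfvv}--\eqref{traceaux}.
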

\begin{proof}
This result is an application of the general flat extension theorem \cite[Theorem~3.1]{mours} to the present setup.
The proof of Theorem~\ref{flateth} follows almost verbatim the same pattern as the proof of Proposition~4.3 in \cite{mours} does in the special case $F=\Lin \{x_1,\dotsc,x_d\}$ and $\cX=\dR^d$.
We do not carry out the details and refer to \cite{mours}.
At the end of this proof it was shown that there exist points $x_i\in \cX$ and vectors $u_i=(u_{ji})\in \dC^d$, $i=1,\dotsc,n$, such that 
\begin{equation}\label{trunvmatrices}
    \Lambda((a_{jk}))
    =\sum_{j,k=1}^q\sum_{i=1}^n a_{jk}(x_i)u_{ki}\overline{u_{ji}}
\end{equation}
for $A=(a_{jk})\in M_q(E_{2m+2})$.
Let $M_i$ denote the matrix with $(k,j)$\nobreakdash-entry $u_{ki}\overline{u_{ji}}$.
Clearly, then $M_i\in \Hqgg $ and \eqref{trunvmatrices} read as $\Lambda(A)=\sum_{i=1}^n  \tr( A(x_i)M_i)$.
This completes the proof of the theorem.
\end{proof}

\section{An existence theorem}\label{existence}
For a moment functional $\Lambda$ on $\cE$ it is obvious that $\Lambda(F)\geq 0$ for $F\in \cE_\succeq$, where $\cE_\succeq:=\{A\in \cE\colon A\succeq O\}$.
It is natural to ask whether this positivity condition is sufficient for being a moment functional.
For the \emph{full} matricial moment problem of polynomials on $\dR^d$ this was proved in \cite[Proposition~2.1]{sch87}.
For the truncated moment problem it fails already in the scalar case (see \cite[Example~9.17]{sch17}).
However, if $\cX$ is a \emph{compact} space and all functions of $\cE$ are \emph{continuous}, this condition is sufficient, as shown by the next theorem.

\begin{thm}\label{compactpos}
Suppose that $\cX$ is a compact topological space and all $F\in \cE$ are continuous on $\cX$.
Suppose there is a matrix $e\in \cE$ and an $\varepsilon >0$ such that $e(x)\succeq \varepsilon I_q$ for all $x\in \cX$.
If $\Lambda$ is a linear functional on $\cE$ such that $\Lambda(F)\geq 0$ for all $F\in \cE_{\succeq}$, then $\Lambda$ is a moment functional.
\end{thm}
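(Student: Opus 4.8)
The plan is to identify $\Lambda$ as an element of the cone of moment functionals via a dual-cone (bipolar) argument, the decisive point being that this cone is \emph{closed}; this is exactly where the compactness of $\cX$ and the coercive element $e$ will enter. Let $\cC\subseteq\cE^*$ denote the convex cone of all moment functionals on $\cE$. By the matricial Richter--Tchakaloff theorem (Theorem~\ref{richterm}) together with the fact that every finite non-negative combination of the atomic functionals $\Lambda_{x,M}\colon F\mapsto\tr(F(x)M)$ (with $x\in\cX$, $M\in\Hqgg$, represented by $M\delta_x$) is a moment functional, $\cC$ is precisely the conic hull of these atoms.

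First I would compute the dual cone $\cC^{\vee}:=\{F\in\cE\colon \Lambda'(F)\geq 0\text{ for all }\Lambda'\in\cC\}$. Testing against the generators, the condition $\tr(F(x)M)\geq 0$ for all $M\in\Hqgg$ holds if and only if $F(x)\succeq O$, so $F\in\cC^{\vee}$ forces $F\in\cE_{\succeq}$; the reverse inclusion $\cE_{\succeq}\subseteq\cC^{\vee}$ is immediate from \eqref{scalarpos}. Hence $\cC^{\vee}=\cE_{\succeq}$. Since $\cE^*$ is finite-dimensional, the bipolar theorem then gives $\overline{\cC}=(\cC^{\vee})^{\vee}=(\cE_{\succeq})^{\vee}=\{\Lambda\colon\Lambda(F)\geq 0\text{ for all }F\in\cE_{\succeq}\}$. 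The hypothesis on $\Lambda$ says exactly that $\Lambda\in(\cE_{\succeq})^{\vee}=\overline{\cC}$, so it remains only to prove that $\cC$ is closed.

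To establish closedness --- the heart of the matter --- I would first convert the coercivity of $e$ into a bound on total mass. From $e(x)\succeq\varepsilon I_q$ one gets $\tr(e(x)M)\geq\varepsilon\tr M$ for every $M\in\Hqgg$, so any representing measure $\mu$ of $\Lambda'\in\cC$ satisfies $\Lambda'(e)=\int_\cX\langle e,\dif\mu\rangle\geq\varepsilon\,(\tr\mu)(\cX)$, whence its total mass is at most $\varepsilon^{-1}\Lambda'(e)$. Combining this with Theorem~\ref{richterm}, each $\Lambda'\in\cC$ admits a representation $\sum_{j=1}^{N}\Lambda_{x_j,M_j}$ with $N:=\dim\cE$ fixed (padding with zero matrices), $x_j\in\cX$, $M_j\in\Hqgg$ and $\sum_{j}\tr M_j\leq\varepsilon^{-1}\Lambda'(e)$. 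Now given $\Lambda_n\to\Lambda$ in $\cC$, the convergent values $\Lambda_n(e)$ are bounded by some $C$; setting $R:=\varepsilon^{-1}C$, every $\Lambda_n$ lies in the set $\Sigma_R$ of all sums $\sum_{j=1}^{N}\Lambda_{x_j,M_j}$ with $\sum_j\tr M_j\leq R$. The parameter domain $\{(x_j,M_j)_{j=1}^N\colon x_j\in\cX,\ M_j\in\Hqgg,\ \sum_j\tr M_j\leq R\}$ is compact, being a closed subset of a product of the compact space $\cX$ with trace-bounded balls of $\Hqgg$, and the map sending it to $\sum_j\Lambda_{x_j,M_j}\in\cE^*$ is continuous precisely because every $F\in\cE$ is continuous on $\cX$. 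Thus $\Sigma_R$ is compact, in particular closed, forcing $\Lambda\in\Sigma_R\subseteq\cC$.

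I expect the closedness of $\cC$ to be the only genuine obstacle, and I anticipate both hypotheses being indispensable exactly there: the scalar truncated counterexample recalled before the theorem shows the conclusion fails without compactness and without a coercive element, whereas the dual-cone identification $\cC^{\vee}=\cE_{\succeq}$ is routine once \eqref{scalarpos} is available.
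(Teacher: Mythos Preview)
Your argument is correct, and it takes a genuinely different route from the paper's own proof. The paper does not use duality or the closedness of the moment cone at all: instead it first exploits the coercive element $e$ to see that $\cE$ is cofinal in $\cH_q(C(\cX))$, extends $\Lambda$ to a positive functional on all of $\cH_q(C(\cX))$, and then constructs a representing measure by hand---applying the scalar Riesz representation theorem to the trace functional $T(f)=\sum_k\Lambda(fe_{kk})$ to produce a Radon measure $\tau$, showing each matrix-entry functional $l_{kn}$ is absolutely continuous with bounded density $\phi_{kn}$ against $\tau$, and finally verifying that the resulting matrix density $\Phi=(\phi_{kn})$ is positive semidefinite $\tau$-a.e.

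Your approach has the advantage of being purely finite-dimensional and of pinpointing exactly where each hypothesis enters (compactness of $\cX$ and continuity of the $F$'s for the continuity of the parametrization map, the coercive element $e$ for the mass bound), and it directly yields a finitely atomic representing measure; on the other hand it leans on the matricial Richter--Tchakaloff theorem as a black box. The paper's approach is more constructive---it actually builds a measure with an $L^\infty$ density with respect to a scalar Radon measure---and avoids Richter--Tchakaloff entirely, at the cost of the extension step and the density argument, which makes it closer in spirit to the infinite-dimensional result in \cite{sch87} it is modeled on.
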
 
\begin{proof}
The proof follows the lines of the proof of Proposition~2.1 in \cite{sch87}, adapted to the present situation.

We denote by $M_q(C(\cX))$ the $q\times q$~matrices with entries from $C(\cX)$, by $\Hq(C(\cX))$ the subset of Hermitian matrices 
and by $\Hqgg (C(\cX))$ the matrices $A \in \Hq(C(\cX))$ such that $A(x)\succeq O$ for all $x\in \cX$.
Since all $F\in \cE$ are continuous, $\cE\subseteq \cH_q(C(\cX))$.
Because $\cX$ is compact, for each $A\in \Hq(C(\cX))$ there exists $C>0$ such that $A(x)\preceq C\cdot I_q$, hence $A(x)\preceq C\varepsilon^{-1}\cdot e(x)$ for all $x\in \cX$.
Since $e\in \cE$, this implies that the vector space $\cE$ is cofinal in $\Hq(C(\cX))$ with respect to the matrix ordering ``$\succeq$''. 
By assumption the linear functional $\Lambda$ is non-negative on $\cE_\succeq$.
Therefore, $\Lambda$ can be extended to a real-valued linear functional on $\Hq(C(\cX))$ and then to a complex-valued linear functional, denoted again $\Lambda$, on $M_q(C(\cX))$ such that
\begin{equation}\label{succeqpos}
    \Lambda(A)
    \geq 0
    \quad \text{for all }A\in \Hqgg (C(\cX)).
\end{equation}

For $f\in C(\cX)$ we define $l_{kn}(f)=\Lambda(fe_{kn})$, $k,n=1,\dotsc,q$, and 
\[
    T(f)
    :=\sum_{k=1}^q l_{kk}(f).
\]
As usual, let $C(\cX)_+:=\{f\in C(\cX)\colon f(x)\geq 0\text{ for all }x\in \cX\}$.
For $f\in C(\cX)_+$ and $k=1,\dotsc,q$, we set $A_{f,k}:=(\sqrt{f}e_k)^*(\sqrt{f}e_k)$.
Then $A_{f,k }\in \Hqgg (C(\cX))$ and $\Lambda(A_{f,k})=l_{kk}(f)\geq 0$ by \eqref{succeqpos}.
Therefore, $T(f)\geq 0$ for all $f\in C(\cX)_+$, so $T$ is a positive linear functional on $C(\cX)$.
Since $\cX$ is compact, there exists a Radon measure $\tau$ on $\cX$ such that
\[
    T(f)
    =\int_\cX f(x)\, \dif{\tau}(x)
    \quad\text{for all }f\in C(\cX).
\]

Let $f\in C(\cX)_+$.
For $t=(t_1,\dotsc, t_q)\in \dC^q$, we consider the matrix $B_{f,t}:=(\,ft_i\overline{t_j}\,)\in \Hqgg(C(\cX))$.
From \eqref{succeqpos} we obtain
\begin{equation}\label{postmtn}
    \Lambda(B_{f,t})
    = \sum_{k,n=1}^q l_{kn}(f)t_k\overline{t_n}
    \geq 0.
\end{equation}
Since the $t_j\in \dC$ are arbitrary, it follows that $l_{kk}(f)\geq 0$, $l_{nn}(f)\geq 0$ and
\[
    \lvert l_{kn}(f)\rvert
    \leq \sqrt{l_{kk}(f)}\, \sqrt{l_{nn}(f)}
    \leq \frac{1}{2} \left( l_{kk}(f)+l_{nn}(f)\right)
    \leq T(f)
    = \int_\cX f\, \dif{\tau}
\]
for $k,n=1,\dotsc,q$.
This implies that
\[
    \lvert l_{kn}(f)\rvert
    \leq 4\int_\cX \lvert f(x)\rvert \, \dif{\tau}(x)
    \quad \text{for all }f \in C(\cX).
\]
Hence there exists a function $\phi_{kn}\in L^\infty(\cX;\tau)$ such that 
\begin{equation}\label{lkngkn}
    l_{kn}(f)
    = \int_\cX f(x)\phi_{kn}(x)\, \dif{\tau}(x)
    \quad \text{for all } f\in C(\cX).
\end{equation}
Now we use the matrix $B_{f,t}=(\,ft_i\overline{t_j}\,)_{i,j=1}^q$ of $\Hqgg(C(\cX))$ once more.
By \eqref{postmtn}, 
\begin{align*}
    \Lambda(B_{f,t})
    =\sum_{k,n=1}^q l_{kn}(f)  t_k\overline{t_n}
    = \int_\cX f(x)\left[\sum_{k,n=1}^q \phi_{kn}(x)t_k\overline{t_n}\right]\dif\tau(x)
    \geq 0
\end{align*}
for all $f\in C(\cX)_+$.
In particular, $\int_\cX[\sum_{k,n=1}^q \phi_{kn}(x)t_k\overline{t_n}]\dif\tau(x)\geq 0$.
Hence there is a $\tau$\nobreakdash-null set $N_t$ such that  $\sum_{k,n=1}^q \phi_{kn}(x)t_k\overline{t_n}\geq 0$ for all $x\in \cX\setminus N_t$.
We choose a countable dense subset $T$ of $\dC^q$.
Then the set $N:=\bigcup_{t\in T} N_t $ has $\tau$\nobreakdash-measure zero and we have $\sum_{k,n=1}^q \phi_{kn}(x)t_k\overline{t_n}\geq 0$ for all $x\in \cX\setminus N$ and $t\in T$.
Since $T$ is dense in $\dC^q$, the latter holds for all $t\in \dC^q$ and $x\in \cX\setminus N $.
Therefore we conclude that $\Phi(x):=(\phi_{kn}(x))\succeq O$ $\tau$\nobreakdash-a.\,e.\ on $\cX$.
Hence there is a measure $\mu\in \cM_q(\cX)$ given by $\dif\mu(x)=\Phi(x)\dif\tau(x)$.
Let $F=(f_{ij})\in \cE$.
Then, by \eqref{lkngkn}, we derive
\[
    \Lambda (F)
    =\sum_{k,n=1}^q l_{kn}(f_{kn})
    =\sum_{k,n=1}^q \int_\cX f_{kn} \phi_{kn}\, \dif\tau
    =\int_\cX\tr(F\Phi)\, \dif\tau
    =\Lambda^\mu(F),
\]
that is, $\Lambda=\Lambda^\mu$, so $\Lambda$ is a moment functional on $\cE$.
\end{proof}

\begin{cor}\label{C1002}
Retain the assumptions of Theorem~\ref{compactpos}.
For a linear functional $\Lambda$ on $\cE$ the following statements are equivalent:
\begin{enumerate}[(i)]
    \item\label{C1002.i} $\Lambda$ is a moment functional.
    \item\label{C1002.ii} $\Lambda(F)\geq 0$ for all $F\in \cE_{\succeq}$.
\end{enumerate}
\end{cor}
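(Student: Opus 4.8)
The plan is to establish the two implications separately, and to observe that one of them is nothing but Theorem~\ref{compactpos}. The implication (ii)$\,\Rightarrow\,$(i) is precisely the assertion of Theorem~\ref{compactpos}, whose hypotheses (compactness of $\cX$, continuity of all $F\in\cE$, and the existence of a uniformly positive definite $e\in\cE$) coincide with the assumptions retained in the corollary. I would therefore simply invoke that theorem. The entire substance of the corollary is thus absorbed into that existence result, and nothing further is needed for this direction.

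For the reverse implication (i)$\,\Rightarrow\,$(ii), I would argue directly, using neither compactness nor continuity. Suppose $\Lambda$ is a moment functional with representing measure $\mu\in\cM_q(\cX,\fX)$, and write $\dif\mu=\Phi(x)\,\dif\tau$ with $\tau:=\tr\mu$ and $\Phi(x)\in\Hqgg$ for all $x\in\cX$, as recalled in the preliminaries. For $F\in\cE_\succeq$ one has $F(x)\succeq O$ for every $x\in\cX$, so applying the positivity relation \eqref{scalarpos} to the pair $F(x),\Phi(x)\in\Hqgg$ shows that the integrand $\langle F(x),\Phi(x)\rangle=\tr(F(x)\Phi(x))$ is non-negative $\tau$-almost everywhere. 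Integrating against $\tau$ yields
\[
    \Lambda(F)=\int_\cX\langle F,\dif\mu\rangle\geq 0,
\]
which is statement (ii).

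Combining the two directions gives the equivalence. The only nontrivial ingredient is Theorem~\ref{compactpos}; the converse is the routine observation, already noted at the beginning of this section, that the integral of a pointwise non-negative matrix field against a positive $\Hq$-valued measure is non-negative. I therefore do not anticipate any genuine obstacle, since all the hard analysis has been carried out in Theorem~\ref{compactpos} and the remaining direction reduces to the elementary positivity of $\langle X,Y\rangle$ for $X,Y\in\Hqgg$.
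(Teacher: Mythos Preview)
Your proof is correct and follows essentially the same approach as the paper: the paper also obtains (ii)$\Rightarrow$(i) by invoking Theorem~\ref{compactpos}, and for (i)$\Rightarrow$(ii) it cites \cite[Lemma~4.4]{ms}, whose content is exactly the direct argument via \eqref{scalarpos} that you wrote out.
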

\begin{proof}
The easy part~\ref{C1002.i}$\to$\ref{C1002.ii} holds by \cite[Lemma~4.4]{ms} and the implication~\ref{C1002.ii}$\to$\ref{C1002.i} is Theorem~\ref{compactpos}.
\end{proof}

\section{Masses}\label{masses}
The set of finitely atomic representing measures is, of course, the most important characteristic of a moment functional.
Except from simple cases, it seems to be hopeless to obtain a complete description.
But it is natural to look for the sets of possible atoms and masses.
The set of atoms has been characterized by the core set in \cite[Theorem~8.16]{ms}. 
In this section and the following two we are looking at the set of possible masses at a fixed point.

We begin with some basic definitions.

\begin{dfn}\label{D1005}
For a matrix measure $\mu\in \cM_q(\cX,\fX)$ and a point $x_0\in \cX$, the matrix $\mu(\{x_0\})\in \Hqgg $ is called the \emph{mass} of $\mu$ at $x_0$. 
\end{dfn}

\begin{dfn}\label{D2116}
For a moment functional $\Lambda$ on $\cE$ and $x_0\in \cX$, we denote by
\begin{itemize}
    \item $\bM(\Lambda;x_0)$ the set of masses at $x_0$ of all representing measures of $\Lambda$,
    \item $\bM_\mathrm{max}(\Lambda;x_0)$ the set of maximal elements of $\bM(\Lambda;x_0)$ with respect to ``$\succeq$'',
    \item $\bP(\Lambda;x_0)$ the set of matrices $Y\in \Hqgg$ for which there exists a matrix $X\in \bM(\Lambda;x_0)$ such that $X\succeq Y$.
\end{itemize}
\end{dfn}

Let $\cM$ be a subset of $\Hq$.
Following \cite{pukelsheim}, the \emph{penumbra} of $\cM$ is the set
of matrices $A\in \Hq$ for which there exists a matrix $M\in \cM$ such that $A\preceq M$.
Then, by this definition, $\bP(\Lambda;x_0)$ is the intersection of $\Hqgg$ with the penumbra of $\bM(\Lambda;x_0)$.
Obviously,
\[
    \bM_\mathrm{max}(\Lambda,x_0)
    \subseteq \bM(\Lambda;x_0)
    \subseteq \bP(\Lambda;x_0).
\]

From the matricial Richter--Tchakaloff theorem it follows that $\bM(\Lambda;x_0)$ is equal to the set of masses at $x_0$ of all \emph{finitely atomic} representing measures of $\Lambda$, see \cite[Lemma~7.2]{ms}.

Throughout this section, we assume that $\Lambda$ is a \textbf{moment functional} on $\cE$ and that \textbf{there exist $e\in \cE$ and $\varepsilon >0$ such that $e(x)\succeq \varepsilon I_q$ for all $x\in \cX$.}

\begin{lem}\label{L0934}
If $M\in\bM(\Lambda;x_0)$, then $\Lambda(F)\geq\tr(F(x_0)M)$ for all $F\in\cE_\succeq$ and, in particular, $\Lambda(e)\geq\varepsilon\tr M$.
\end{lem}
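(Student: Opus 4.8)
The plan is to exploit directly the meaning of $M\in\bM(\Lambda;x_0)$: by definition there is a representing measure $\mu\in\cM_\Lambda$ of $\Lambda$ whose mass at $x_0$ is $\mu(\{x_0\})=M$. Since $\mu$ represents $\Lambda$, we have $\cE\subseteq L^1(\mu;\cH_q)$, so every integral below is well defined. The key idea is to peel off the atom at $x_0$ by writing $\mu=M\delta_{x_0}+\mu'$, where $\mu'$ is the $\Hqgg$\nobreakdash-valued measure given by $\mu'(X):=\mu(X\setminus\{x_0\})$ for $X\in\fX$; this $\mu'$ is again positive and satisfies $\mu'(\{x_0\})=O$.

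Now fix $F\in\cE_\succeq$. Using the decomposition and linearity of the integral, I would write
\[
    \Lambda(F)=\int_\cX\langle F,\dif\mu\rangle=\tr(F(x_0)M)+\int_\cX\langle F,\dif\mu'\rangle.
\]
The atomic term equals $\langle F(x_0),M\rangle=\tr(F(x_0)M^*)=\tr(F(x_0)M)$, the last equality because $M\in\Hqgg$ is Hermitian. For the remaining integral, $F\in\cE_\succeq$ means $F(x)\succeq O$ for all $x\in\cX$, and $\mu'$ is a positive measure; hence the integrand is pointwise nonnegative by \eqref{scalarpos}, so $\int_\cX\langle F,\dif\mu'\rangle\geq 0$. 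Combining these gives $\Lambda(F)\geq\tr(F(x_0)M)$, which is the first assertion.

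For the ``in particular'' part, I would observe that the distinguished matrix $e$ satisfies $e(x)\succeq\varepsilon I_q\succeq O$ on $\cX$, so $e\in\cE_\succeq$, and the inequality just proved yields $\Lambda(e)\geq\tr(e(x_0)M)$. Since $e(x_0)-\varepsilon I_q\succeq O$ and $M\succeq O$, applying \eqref{scalarpos} to this pair gives $\tr\bigl((e(x_0)-\varepsilon I_q)M\bigr)\geq 0$, i.e.\ $\tr(e(x_0)M)\geq\varepsilon\tr(I_qM)=\varepsilon\tr M$, whence $\Lambda(e)\geq\varepsilon\tr M$. The argument is essentially routine; the only point requiring care is the legitimacy of splitting $\mu$ into its atom at $x_0$ plus a positive remainder $\mu'$ and confirming that the singleton contributes exactly $\tr(F(x_0)M)$ --- once this is in place, all the inequalities follow directly from the nonnegativity \eqref{scalarpos} of the scalar product on pairs of positive matrices.
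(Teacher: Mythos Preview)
Your proof is correct and follows essentially the same approach as the paper: isolate the contribution of the atom at $x_0$ and use \eqref{scalarpos} to see that the remainder is nonnegative. The only cosmetic difference is that the paper works through the Radon--Nikodym density $\Phi$ with respect to the trace measure and bounds $\int_\cX\langle F,\Phi\rangle\,\dif\tau\geq\langle F(x_0),\Phi(x_0)\rangle\tau(\{x_0\})$, whereas you split the measure directly as $\mu=M\delta_{x_0}+\mu'$; both routes express the same idea and the ``in particular'' part is handled identically.
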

\begin{proof}
Let $F\in\cE_\succeq$.
There exists $\mu\in\cM_\Lambda$ with $\mu(\{x_0\})=M$.
Since $F(x)\succeq O$ by assumption and $\Phi(x)\succeq O$ by our convention, $\langle F(x),\Phi(x)\rangle\geq0$ by \eqref{scalarpos} for all $x\in\cX$.
We derive
\[\begin{split}
    \Lambda(F)
    &=\int_\cX\langle F,\dif\mu\rangle
    =\int_\cX\langle F,\Phi\rangle\dif\tau
    \geq\langle F(x_0),\Phi(x_0)\rangle\tau(\{x_0\})\\
    &=\langle F(x_0),\Phi(x_0)\tau(\{x_0\})\rangle
    =\langle F(x_0),\mu(\{x_0\})\rangle
    =\langle F(x_0),M\rangle
    =\tr(F(x_0)M).
\end{split}\]
Since obviously $e$ belongs to $\cE_\succeq$, we can conclude $\Lambda(e)\geq \tr(e(x_0)M)\geq \varepsilon \tr M$.
\end{proof}

\begin{lem}\label{convexbounded}
For any $x_0\in\cX$, $\bM(\Lambda;x_0)$ and $\bP(\Lambda;x_0)$ are bounded convex sets in $\Hq$.
\end{lem}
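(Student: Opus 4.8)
The plan is to establish boundedness and convexity for the two sets separately, handling $\bP(\Lambda;x_0)$ by reducing it to $\bM(\Lambda;x_0)$ via the order-theoretic definition of the penumbra. Boundedness comes essentially for free from Lemma~\ref{L0934}: every $M\in\bM(\Lambda;x_0)$ satisfies $\tr M\leq\varepsilon^{-1}\Lambda(e)$, and since $M\in\Hqgg$ has non-negative eigenvalues $\lambda_1,\dotsc,\lambda_q$, the estimate
\[
    \langle M,M\rangle
    =\tr(M^2)
    =\sum_{i=1}^q\lambda_i^2
    \leq\Bigl(\sum_{i=1}^q\lambda_i\Bigr)^2
    =(\tr M)^2
    \leq\varepsilon^{-2}\Lambda(e)^2
\]
bounds the norm of $M$ uniformly. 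For $\bP(\Lambda;x_0)$, any element $Y$ lies in $\Hqgg$ and satisfies $O\preceq Y\preceq X$ for some $X\in\bM(\Lambda;x_0)$, so $\tr Y\leq\tr X\leq\varepsilon^{-1}\Lambda(e)$, and the same eigenvalue estimate gives the identical norm bound.

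For convexity of $\bM(\Lambda;x_0)$, the key point I would use is that $\cM_q(\cX,\fX)$ is convex and that the set of representing measures $\cM_\Lambda$ is stable under convex combinations. Concretely, given $M_0,M_1\in\bM(\Lambda;x_0)$, I would pick $\mu_0,\mu_1\in\cM_\Lambda$ with $\mu_i(\{x_0\})=M_i$ and form $\mu_t:=(1-t)\mu_0+t\mu_1$ for $t\in[0,1]$. A convex combination of positive $\Hq$-valued measures is again positive, and since the integral $\Lambda^\mu(F)=\int_\cX\langle F,\dif\mu\rangle$ is linear in $\mu$, one gets $\Lambda^{\mu_t}(F)=(1-t)\Lambda(F)+t\Lambda(F)=\Lambda(F)$ for all $F\in\cE$. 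Hence $\mu_t\in\cM_\Lambda$ with $\mu_t(\{x_0\})=(1-t)M_0+tM_1$, so this mass lies in $\bM(\Lambda;x_0)$.

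Convexity of $\bP(\Lambda;x_0)$ then follows formally: given $Y_0,Y_1\in\bP(\Lambda;x_0)$, choose $X_0,X_1\in\bM(\Lambda;x_0)$ with $Y_i\preceq X_i$; for $t\in[0,1]$ the matrix $(1-t)Y_0+tY_1$ lies in the convex cone $\Hqgg$ and is dominated by $(1-t)X_0+tX_1\in\bM(\Lambda;x_0)$, whence $(1-t)Y_0+tY_1\in\bP(\Lambda;x_0)$. The argument is routine once Lemma~\ref{L0934} is available; the only step deserving a word of care is the claim that convex combinations preserve the representing-measure property, i.e.\ that every $F\in\cE$ remains $\mu_t$-integrable and $\Lambda^{\mu_t}=\Lambda$. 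This is not a genuine obstacle, however: it reduces to the linearity of the vector-valued integral $\int_\cX\langle F,\dif\mu\rangle$ in the measure $\mu$, which is immediate from its construction recalled in Section~\ref{prelim}.
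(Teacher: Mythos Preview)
Your proof is correct and follows essentially the same approach as the paper: convexity of $\bM(\Lambda;x_0)$ via convex combinations of representing measures, convexity of $\bP(\Lambda;x_0)$ by the formal order-theoretic argument (which the paper simply calls ``obvious''), and boundedness via the trace estimate $\tr M\leq\varepsilon^{-1}\Lambda(e)$ from Lemma~\ref{L0934}. The only cosmetic difference is that the paper invokes the trace norm directly (on $\Hqgg$ the trace \emph{is} the trace norm) and then appeals to equivalence of norms, whereas you pass explicitly to the Hilbert--Schmidt norm via the eigenvalue inequality $\sum\lambda_i^2\leq(\sum\lambda_i)^2$.
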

\begin{proof}
The convexity of $\bM(\Lambda; x_0)$ is clear, since any convex combination of matrices of $\bM(\Lambda; x_0)$ is the mass at $x_0$ of the corresponding convex combination of representing measures.
The convexity of $\bP(\Lambda; x_0)$ is obvious.

We prove that $\bM(\Lambda;x_0)$ is bounded.
Let $X\in\bM(\Lambda; x_0)$.
Because of Lemma~\ref{L0934}, then $\Lambda(e)\geq \varepsilon \tr X$, so that $0\leq \tr X\leq \varepsilon^{-1} \Lambda(e)$. 
Hence $\bM(\Lambda; x_0)$ is a bounded subset in the trace norm and therefore in any norm on $\Hq$. 

By definition, for $Y\in \bP(\Lambda;x_0)$ there exists a matrix $X\in \bM(\Lambda;x_0)$ such that $X\succeq Y\succeq O$.
Hence $\bP(\Lambda;x_0)$ is also bounded.
\end{proof}

\begin{prop}\label{P1014}
Suppose that $\cX$ is a compact topological space and each $F\in \cE$ is continuous on $\cX$.
Let $x_0\in \cX$.
Then $\bP(\Lambda;x_0)$ is a convex compact subset of $\Hqgg $.
\end{prop}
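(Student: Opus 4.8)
The plan is to reduce the statement to a closedness assertion and then prove it by a subtract-and-restore argument. By Lemma~\ref{convexbounded}, $\bP(\Lambda;x_0)$ is convex and bounded, and it is contained in $\Hqgg$ by its very definition. Since $\Hq$ is finite-dimensional, it remains only to show that $\bP(\Lambda;x_0)$ is closed, and for this I would identify it with the set
\[
    \cW := \bigl\{Y\in \Hqgg \colon \Lambda(F)\geq \tr(F(x_0)Y)\text{ for all }F\in \cE_\succeq\bigr\}.
\]
The set $\cW$ is the intersection of $\Hqgg$ with the closed half-spaces $\{Y\colon \tr(F(x_0)Y)\leq \Lambda(F)\}$ indexed by $F\in \cE_\succeq$, hence is closed; so the identity $\bP(\Lambda;x_0)=\cW$ would finish the proof.

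For the inclusion $\bP(\Lambda;x_0)\subseteq \cW$, let $Y\in \bP(\Lambda;x_0)$; then $Y\in \Hqgg$ and there is some $X\in \bM(\Lambda;x_0)$ with $X\succeq Y$. For $F\in \cE_\succeq$ we have $F(x_0)\succeq O$ and $X-Y\succeq O$, so $\tr\bigl(F(x_0)(X-Y)\bigr)\geq 0$ by \eqref{scalarpos}; together with the estimate $\Lambda(F)\geq \tr(F(x_0)X)$ furnished by Lemma~\ref{L0934}, this gives $\Lambda(F)\geq \tr(F(x_0)Y)$, i.e.\ $Y\in \cW$.

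The substance of the argument is the reverse inclusion $\cW\subseteq \bP(\Lambda;x_0)$, where the compactness of $\cX$ and the continuity of the elements of $\cE$ finally intervene. Given $Y\in \cW$, I would consider the linear functional $\Lambda'$ on $\cE$ defined by $\Lambda'(F):=\Lambda(F)-\tr(F(x_0)Y)$, which is $\Lambda$ minus the functional induced by the point mass $Y\delta_{x_0}$. The defining inequalities of $\cW$ say precisely that $\Lambda'(F)\geq 0$ for all $F\in \cE_\succeq$, so, using the standing hypothesis of this section that there exist $e\in \cE$ and $\varepsilon>0$ with $e(x)\succeq \varepsilon I_q$, Theorem~\ref{compactpos} applies to $\Lambda'$ and yields a representing measure $\mu'\in \cM_q(\cX,\fX)$. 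Then $\mu:=\mu'+Y\delta_{x_0}$ is again a positive $\Hq$-valued measure, and for every $F\in \cE$ one has $\int_\cX\langle F,\dif\mu\rangle=\Lambda'(F)+\tr(F(x_0)Y)=\Lambda(F)$, so $\mu\in \cM_\Lambda$. Its mass at $x_0$ is $X:=\mu(\{x_0\})=\mu'(\{x_0\})+Y\succeq Y$, whence $X\in \bM(\Lambda;x_0)$ and $Y\in \bP(\Lambda;x_0)$. This establishes $\bP(\Lambda;x_0)=\cW$, so $\bP(\Lambda;x_0)$ is closed, and being convex and bounded it is a convex compact subset of $\Hqgg$. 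I expect the only delicate point to be the verification that the construction indeed reproduces $\Lambda$ and respects positivity, i.e.\ that subtracting $Y\delta_{x_0}$ leaves a functional to which Theorem~\ref{compactpos} is applicable; the remaining ingredients are routine.
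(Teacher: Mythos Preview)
Your proof is correct, but it takes a different route from the paper's. The paper proves closedness by a direct sequential compactness argument: given $Y_n\to Y$ in $\bP(\Lambda;x_0)$, it picks $X_n\in\bM(\Lambda;x_0)$ with $X_n\succeq Y_n$ and representing measures $\mu_n$ with $\mu_n(\{x_0\})=X_n$, applies the matricial Richter--Tchakaloff theorem to make each $\mu_n-X_n\delta_{x_0}$ finitely atomic with at most $\dim\cE$ atoms, then uses compactness of $\cX$ (and the uniform mass bound coming from the element $e$) to extract convergent subsequences of atoms and masses and pass to the limit. In particular, the paper's proof of Proposition~\ref{P1014} does \emph{not} invoke Theorem~\ref{compactpos}.

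Your argument, by contrast, identifies $\bP(\Lambda;x_0)$ with the manifestly closed set $\cW=\fM(\Lambda;x_0,O)$ by appealing to Theorem~\ref{compactpos} to turn the ``subtracted'' functional $\Lambda'$ back into a moment functional. This is precisely the content of Proposition~\ref{lpos}, which the paper establishes a few results later under the abstract hypothesis~(PC); you have in effect anticipated that proposition and specialized it to the compact-continuous setting. Your approach is shorter and more conceptual, and since Theorem~\ref{compactpos} is already available there is no circularity. The paper's approach, on the other hand, is self-contained at the level of Richter--Tchakaloff plus point-set compactness, and illustrates a limiting technique that is of independent interest.
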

\begin{proof}
That $\bP(\Lambda;x_0)$ is convex and bounded was already stated in Lemma~\ref{convexbounded}.
Thus it remains to prove that the set $\bP(\Lambda; x_0)$ is closed in $\Hq$.
For we suppose that $(Y_n)_{n\in \dN}$ is a sequence of $\bP(\Lambda; x_0)$ converging to some matrix $Y\in \Hq$.
Let $n\in \dN$.
By the definition of $ \bP(\Lambda; x_0)$, there exists a matrix $X_n\in\bM(\Lambda; x_0)$ such that $X_n\succeq Y_n\succeq O$ and there is a representing measure $\mu_n$ for $\Lambda$ such that $\mu_n(\{x_0\})=X_n$.
Then $\tilde{\mu}_n:=\mu_n-X_n\delta_{x_0}$ is a measure of $\cM_q(\cX,\fX)$.
Let $\tilde{\Lambda}_n$ be the moment functional of $\tilde{\mu}_n$ on $\cE$.
By Theorem~\ref{richterm}, $\tilde{\Lambda}_n$ has a representing measure of the form $\nu_n=\sum_{j=1}^{k_n} M_{n,j}\delta_{x_{n,j}}$, where $k_n\leq d:=\dim \cE$, $x_{n,j}\in \cX$ and $M_{n,j}\in \Hqgg $.
Without loss of generality, we can assume $k_n=d$.
Then 
\[\begin{split}
    \Lambda(e)
    &=\tilde{\Lambda}_n(e)+ \tr(e(x_0)X_n)
    =\sum_{i=1}^d \tr(e(x_{n,i})M_{n,i})+\tr(e(x_0)X_n)\\
    &\geq \tr(e(x_{n,j})M_{n,j})+\tr(e(x_0)X_n)
    \geq \varepsilon(\tr M_{n,j} +\tr X_n).
\end{split}\]
Hence the set of all matrices $X_n$ and $M_{n,j}$, where $n\in \dN$ and $j=1,\dotsc,d$, is bounded in $\Hq$ with respect to the trace norm and so in any norm.
Therefore, since $\cX$ is compact, by passing to appropriate sub-sequences if necessary, we can assume that the sequences $(X_n)_{n\in \dN}$ and $(M_{n,j})_{n\in \dN}$ in $\Hq$ and $(x_{n,j})_{n\in \dN}$ in $\cX$ converge for $j=1,\dotsc,d$.
Let $X:=\lim_n X_n$, $M_j:=\lim_n M_{n,j}$ and $x_j:=\lim_n x_{n,j}$.
Then $X\in \Hqgg $ and $M_j\in \Hqgg $.
For $F\in \cE$, 
\begin{equation}\label{lambda}
    \Lambda(F)
    =\tilde{\Lambda}_n(F)+ \tr(F(x_0)X_n)
    =\sum_{j=1}^d \tr(F(x_{n,j})M_{n,j})+ \tr(F(x_0)X_n).
\end{equation}
By assumption each $F\in \cE$ is continuous on $\cX$.
Therefore, we can pass to the limit in \eqref{lambda} and obtain
\[
    \Lambda(F)
    =\sum_{j=1}^d \tr(F(x_{j})M_{j})+ \tr( F(x_0)X).
\]
Hence $\nu:=\sum_{j=1}^d M_j\delta_{x_j} + X\delta_{x_0}$ is a representing measure of $\Lambda$.
Then $\nu(\{x_0\})\succeq X$.
From $X_n\succeq Y_n\succeq O$ we get $X\succeq Y\succeq O$, so that $\nu(\{x_0\})\succeq Y\succeq O$.
This means that $Y$ belongs to $\bP(\Lambda;x_0)$.
\end{proof}

For our considerations the following two  sets are useful.

\begin{dfn}\label{D0709}
Let $\Lambda$ be a moment functional on $\cE$,  $x_0\in \cX$, and  $M_0\in\Hqgg$.
We denote by
\begin{itemize}
    \item $\fM(\Lambda;x_0,M_0)$ the set of $M\in \Hqgg$ satisfying $M\succeq M_0$ and $\Lambda(F) \geq \tr(F(x_0)M)$ for all $F\in \cE_\succeq$.
    \item $\fM_\mathrm{max}(\Lambda;x_0,M_0)$ the set of  maximal elements of $\fM(\Lambda;x_0,M_0)$ with respect to ``$\succeq$''.
\end{itemize}
\end{dfn}

Clearly
\begin{equation}\label{G0814}
  \fM(\Lambda;x_0,M_1)
  \subseteq\fM(\Lambda;x_0,M_0)
  \subseteq\fM(\Lambda;x_0,O)
  \quad\text{for }M_1\succeq M_0\succeq O.    
\end{equation}

The following lemma is a step in the proof of Proposition~\ref{maxdominating}.
The second assertion is used in the proof of Theorem~\ref{constrmethod} below.

\begin{lem}\label{auxl}
Let $x_0\in \cX$ and $M_0\in \Hqgg $.
If~ $\fM(\Lambda;x_0,M_0)\neq\emptyset$, then $\fM(\Lambda;x_0,M_0)$ has at least one maximal element with respect to the ordering ``\/$\succeq$''. 
In particular, $\fM(\Lambda;x_0,O)$ has at least one maximal element.

Moreover, for each maximal element $M_1$ of $\fM(\Lambda;x_0,O)$ with $M_1\neq O$, there exists a sequence $(F_n)_{n\in \dN}$ of $\cE_\succeq$ such that 
\begin{equation}\label{auxsequ}
    \lim_{n\to\infty} \Lambda(F_n)=1\text{ and }\tr(F_n(x_0)M_1)=1\text{ for all }n\in \dN.
\end{equation}
\end{lem}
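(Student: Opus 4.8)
The plan is to treat the two assertions separately. For the existence of maximal elements I would invoke Zorn's lemma, using Corollary~\ref{totallycor} to produce an upper bound for each chain; for the second assertion I would identify the correct infimum and play homogeneity against the maximality of $M_1$.

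First I would show that every non-empty chain $\cN$ in $\fM(\Lambda;x_0,M_0)$ admits an upper bound inside $\fM(\Lambda;x_0,M_0)$. The key preliminary point is that $\cN$ is bounded from above: for $N\in\cN$ the defining inequality applied to $e\in\cE_\succeq$ gives $\Lambda(e)\geq\tr(e(x_0)N)\geq\varepsilon\tr N$ (using $e(x_0)\succeq\varepsilon I_q$, $N\succeq O$, and \eqref{scalarpos}), whence $\tr N\leq\varepsilon^{-1}\Lambda(e)$ and so $N\preceq\varepsilon^{-1}\Lambda(e)\cdot I_q$. Thus $\cN$ is a totally ordered, bounded-above subset of $\Hqgg$, and Corollary~\ref{totallycor} yields a matrix $N_0\in\Hqgg$ with $N_0\succeq N$ for all $N\in\cN$ together with a sequence $(N_n)$ in $\cN$ converging to $N_0$ in norm. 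I would then verify $N_0\in\fM(\Lambda;x_0,M_0)$: the relation $N_0\succeq M_0$ follows from $N_0\succeq N_n\succeq M_0$, and for $F\in\cE_\succeq$ the inequalities $\Lambda(F)\geq\tr(F(x_0)N_n)$ pass to the limit, by continuity of $M\mapsto\tr(F(x_0)M)$, to give $\Lambda(F)\geq\tr(F(x_0)N_0)$. Zorn's lemma then furnishes a maximal element. The ``in particular'' clause follows because $O\in\fM(\Lambda;x_0,O)$, since $\Lambda(F)\geq0=\tr(F(x_0)O)$ for $F\in\cE_\succeq$ as $\Lambda$ is a moment functional, so this set is non-empty.

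For the second assertion, fix a maximal $M_1$ with $M_1\neq O$ and set $c:=\inf\{\Lambda(F)\colon F\in\cE_\succeq,\ \tr(F(x_0)M_1)=1\}$. This feasible set is non-empty: since $M_1\neq O$ we have $\tr M_1>0$ and $\tr(e(x_0)M_1)\geq\varepsilon\tr M_1>0$, so an appropriate positive multiple of $e$ lies in it. Because $M_1\in\fM(\Lambda;x_0,O)$ we have $\Lambda(F)\geq\tr(F(x_0)M_1)=1$ for every such $F$, hence $c\geq1$. I claim $c=1$. Suppose instead $c>1$. Using that $\tr(F(x_0)M_1)\geq0$ for all $F\in\cE_\succeq$ by \eqref{scalarpos}, scaling gives $\Lambda(F)\geq c\,\tr(F(x_0)M_1)=\tr(F(x_0)(cM_1))$ for all $F\in\cE_\succeq$: this is the normalization of the inequality defining $c$ when $\tr(F(x_0)M_1)>0$, and it holds trivially when $\tr(F(x_0)M_1)=0$. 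Consequently $cM_1\in\fM(\Lambda;x_0,O)$, while $cM_1\succeq M_1$ and $cM_1\neq M_1$ (as $c>1$ and $M_1\neq O$), contradicting the maximality of $M_1$. Hence $c=1$, and any minimizing sequence $(F_n)$ in $\cE_\succeq$ with $\tr(F_n(x_0)M_1)=1$ and $\Lambda(F_n)\to c=1$ is exactly the sequence required in \eqref{auxsequ}.

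The routine ingredients are the trace estimate and the limit passage in the first assertion. I expect the main obstacle to be the homogeneity argument in the second assertion: one must recognise $c$ as the right quantity, handle the degenerate directions where $\tr(F(x_0)M_1)=0$ separately, and see that any gap $c>1$ manufactures the strictly larger feasible point $cM_1$, which is what collides with maximality.
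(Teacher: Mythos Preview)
Your proof is correct and follows essentially the same approach as the paper: Zorn's lemma plus Corollary~\ref{totallycor} for the first assertion, and for the second assertion the paper defines $I(M_1):=\inf_{F\in\cE_\succeq}\Lambda(F)/\tr(F(x_0)M_1)$ (with $a/0:=+\infty$), which is precisely your $c$ after homogenization, and argues exactly as you do that $I(M_1)>1$ would place $I(M_1)M_1$ in $\fM(\Lambda;x_0,O)$ and contradict maximality. The only cosmetic difference is that the paper works with the ratio infimum rather than your normalized-constraint infimum, but the argument and the handling of the degenerate case $\tr(F(x_0)M_1)=0$ are the same.
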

\begin{proof}
To prove the first assertion we want to apply Zorn's lemma.
First we note that $\fM(\Lambda;x_0,M_0)$ is not empty by assumption.
Let $\fN$ be a non-empty totally ordered subset of $\fM(\Lambda;x_0,M_0)$.
Let $N\in \fN$.
Since $e$ obviously belongs to $\cE_\succeq$, we conclude that $\Lambda(e)\geq \tr(e(x_0)N)\geq \varepsilon\, \tr N$, so that $\tr N\leq \varepsilon^{-1}\Lambda (e)$.
This implies that the set $\fN$ is bounded from above, so Corollary~\ref{totallycor} applies.
Hence there exist $N_0\in \Hqgg $ and a sequence $(N_n)_{n\in \dN}$ from $\fN$ such that $N_0\succeq N$ for all $N\in \fN$ and $N_0=\lim_n N_n$.
Then, for each $n\in \dN$, we have $\Lambda (F)\geq \tr(F(x_0)N_n)$ for all $F\in \cE_\succeq$ and $N_n\succeq M_0$.
Passing to the limit we obtain $\Lambda(F)\geq \tr(F(x_0)N_0)$ for all $F\in \cE_\succeq$ and $N_0\succeq M_0$.
Thus $N_0\in \fM(\Lambda;x_0,M_0)$.
Therefore, by Zorn's lemma, $\fM(\Lambda;x_0,M_0)$ has at least one maximal element.
Obviously, $O\in \fM(\Lambda;x_0,O)$.
In particular, $\fM(\Lambda;x_0,O)\neq\emptyset$, so that $\fM(\Lambda;x_0,O)$ as at least one maximal element.

Now we verify the second assertion.
Assume that $M_1\neq O$ is a maximal element of $\fM(\Lambda;x_0,O)$.
Then, $M_1\in\Hqgg$.
First, for $M\in \Hqgg $ we define 
\begin{equation}\label{defim}
    I(M)
    :=\inf_{F\in \cE_{\succeq}}\frac{\Lambda(F)}{\tr(F(x_0)M)},
\end{equation}
where we set $\frac{a}{0}:=+\infty$ for any $a\in [0,+\infty)$.
Clearly, $M\in \fM(\Lambda;x_0,O)$ if and only if $I(M)\geq 1$.
Thus, $I(M_1)\geq 1$.
We show that $I(M_1)=1$.
Since $M_1\succeq O$ and $M_1\neq O$ by assumption, we have $\tr(e(x_0)M_1)\neq 0$ and therefore $I(M_1)$ is finite in \eqref{defim}.
Then $\Lambda (F)\geq I(M_1)\tr(F(x_0)M_1)=\tr(F(x_0)[I(M_1)M_1])$ for all $F\in \cE_{\succeq}$ implies that $I(M_1)M_1\in \fM(\Lambda;x_0,O)$.
Regarding $M_1\neq O$, hence $I(M_1)>1$ would contradict the maximality of $M_1$.
Thus we have proved that $I(M_1)=1$. 
 
From the definition of $I(M_1)$ it follows that there exists a sequence $(F_n)_{n\in \dN}$ from $ \cE_{\succeq}$ such that
\[
    \lim_{n\to \infty} \frac{\Lambda(F_n)}{\tr(F_n(x_0)M_1)}
    =I(M_1)
    =1.
\]
We can assume that $\tr(F_n(x_0)M_1)>0$ for all $n\in \dN$.
By scaling $F_n$ we obtain $\tr(F_n(x_0)M_1)=1$ for $n\in \dN$ and $\lim_n \Lambda(F_n)=1$. 
\end{proof}

\begin{rem}\label{R1308}
If $M_0\in\Hqgg$ and $M\in\bM(\Lambda;x_0)$ satisfies $M\succeq M_0$, then we have $M\in\fM(\Lambda;x_0,M_0)$ by Lemma~\ref{L0934}.
In particular, $\bM(\Lambda;x_0)\subseteq\fM(\Lambda;x_0,O)$.
\end{rem}

For the following results in this section we assume that condition~(PC) holds:
\begin{PC}
    A linear functional $\Lambda$ on $\cE$ is a moment functional if $\Lambda(F)\geq 0$ for all $F\in \cE_\succeq$. 
\end{PC}

Recall that in Theorem~\ref{compactpos} we have shown that condition~(PC) is fulfilled if $\cX$ is a compact space, each $F\in \cE$ is continuous on $\cX$, and there exists $e\in \cE$ such that $e(x)\succeq \varepsilon I_q$ on $\cX$ for some $\varepsilon>0$.

\begin{prop}\label{lpos}
For $Y\in \Hqgg $, we have $Y\in \bP(\Lambda;x_0)$ if and only if 
\begin{equation}\label{cond2}
    \tr(F(x_0)Y)
    \leq \Lambda(F)
    \quad\text{for all }F\in \cE_\succeq.
\end{equation}
That is, $\bP(\Lambda;x_0)=\fM(\Lambda;x_0,O)$.
\end{prop}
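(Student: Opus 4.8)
The plan is to prove the two inclusions defining the set equality $\bP(\Lambda;x_0)=\fM(\Lambda;x_0,O)$, treating the easy direction first and reserving condition~(PC) for the hard direction.

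\emph{The direction $Y\in\bP(\Lambda;x_0)\Rightarrow$ \eqref{cond2}.} This is immediate from the earlier machinery. If $Y\in\bP(\Lambda;x_0)$, then by definition there is $X\in\bM(\Lambda;x_0)$ with $X\succeq Y\succeq O$. By Lemma~\ref{L0934} applied to $X$, we have $\Lambda(F)\geq\tr(F(x_0)X)$ for all $F\in\cE_\succeq$. Since $F(x_0)\succeq O$ and $X\succeq Y$, the monotonicity of the trace pairing against a positive matrix (namely $\tr(F(x_0)X)\geq\tr(F(x_0)Y)$ because $\tr(F(x_0)(X-Y))=\langle F(x_0),X-Y\rangle\geq0$ by \eqref{scalarpos}) gives \eqref{cond2}. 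In the language of Definition~\ref{D0709}, this says exactly $Y\in\fM(\Lambda;x_0,O)$, which matches Remark~\ref{R1308}.

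\emph{The converse, $Y$ satisfying \eqref{cond2} $\Rightarrow Y\in\bP(\Lambda;x_0)$.} This is where condition~(PC) enters and is the main obstacle. Given $Y\in\Hqgg$ satisfying \eqref{cond2}, the goal is to manufacture a representing measure $\mu$ of $\Lambda$ whose mass at $x_0$ dominates $Y$. The natural strategy is to consider the ``residual'' functional $\Lambda'(F):=\Lambda(F)-\tr(F(x_0)Y)$ on $\cE$. Condition~\eqref{cond2} says precisely that $\Lambda'(F)\geq0$ for all $F\in\cE_\succeq$, i.e.\ $\Lambda'$ is nonnegative on $\cE_\succeq$. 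By condition~(PC), $\Lambda'$ is therefore itself a moment functional, with some representing measure $\mu'\in\cM_q(\cX,\fX)$. Then $\mu:=\mu'+Y\delta_{x_0}$ lies in $\cM_q(\cX,\fX)$ and represents $\Lambda$, since for every $F\in\cE$ one has $\int_\cX\langle F,\dif\mu\rangle=\Lambda'(F)+\tr(F(x_0)Y)=\Lambda(F)$; note this extends from $\cE_\succeq$ to all of $\cE$ because both sides are linear and $\cE=\cE_\succeq-\cE_\succeq$ (every Hermitian-matrix-valued element is a difference of two positive ones, using the presence of $e$ with $e\succeq\varepsilon I_q$ to absorb spectral bounds). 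Finally $\mu(\{x_0\})=\mu'(\{x_0\})+Y\succeq Y$, so $Y\in\bP(\Lambda;x_0)$ by the definition of the penumbra, namely $\mu(\{x_0\})\in\bM(\Lambda;x_0)$ dominates $Y$.

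The only subtlety to watch is the legitimacy of applying condition~(PC) to $\Lambda'$: one must confirm that $\Lambda'$ is genuinely a \emph{linear functional on $\cE$} (it is, being the difference of $\Lambda$ and the linear map $F\mapsto\tr(F(x_0)Y)$) and that the standing hypotheses under which (PC) was invoked—compactness of $\cX$, continuity of elements of $\cE$, and the existence of $e$—are the ambient assumptions of this section, which they are. With those in place the argument is complete, and the displayed equality $\bP(\Lambda;x_0)=\fM(\Lambda;x_0,O)$ follows by combining both inclusions with Definition~\ref{D0709}.
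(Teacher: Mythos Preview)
Your proof is correct and follows essentially the same route as the paper: the easy direction via Lemma~\ref{L0934} and trace monotonicity, and the converse by forming the residual functional $\Lambda'(F)=\Lambda(F)-\tr(F(x_0)Y)$, invoking (PC) to get a representing measure $\mu'$, and setting $\mu=\mu'+Y\delta_{x_0}$. One small point: the aside about extending from $\cE_\succeq$ to all of $\cE$ via $\cE=\cE_\succeq-\cE_\succeq$ is unnecessary, since $\Lambda'$ is by definition linear on all of $\cE$ and $\mu'$ represents it there; also note that the section assumes condition~(PC) directly rather than the compactness/continuity hypotheses.
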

\begin{proof} 
First we assume that \eqref{cond2} holds.
From this inequality and condition~(PC) it follows that the linear functional $\tilde{\Lambda}\colon\cE\to \dR$ defined by $\tilde{\Lambda}(F):=\Lambda(F)-\tr(F(x_0)Y)$ is a moment functional.
Let $\tilde\mu$ be a representing measure of $\tilde{\Lambda}$.
Then $\mu:= \tilde\mu + Y\delta_{x_0}$ is a representing measure of $\Lambda$, so $\mu(\{x_0\})\in \bM(\Lambda;x_0)$ and $\mu(\{x_0\})= \tilde\mu(\{x_0\})+ Y\succeq Y$.
Thus, $Y\in \bP(\Lambda;x_0)$.
 
Now we show the converse direction.
Assume that $Y\in \bP(\Lambda;x_0)$.
Then there exists $X\in \bM(\Lambda;x_0)$ such that $X\succeq Y$.
Let $F\in \cE_\succeq$.
From $F(x_0)\succeq O$ and $X-Y\succeq O$ (by $X\succeq Y$), we conclude that $\tr(F(x_0)(X-Y)) \geq 0$, so that $\tr(F(x_0)Y)\leq \tr(F(x_0)X)$.
Combined with Lemma~\ref{L0934}, we obtain
\[
    \tr(F(x_0)Y)
    \leq \tr(F(x_0)X)
    \leq \Lambda(F),
\]
which proves condition \eqref{cond2}.
\end{proof}

A sufficient condition for $M$ being a mass of some representing measure of $\Lambda$ is the following.

\begin{cor}\label{C1420}
Let $M\in \Hqgg$ be such that $\tr(F(x_0)M)\leq \Lambda(F)$ for all $F\in \cE_\succeq$.
Suppose that $M'\succeq M$ and
\begin{equation}\label{cond4}
    \tr(F(x_0)M')
    \leq \Lambda(F)
    \quad\text{for all }F\in \cE_\succeq
\end{equation}
imply $M'=M$.
Then $M\in \bM(\Lambda;x_0)$.
\end{cor}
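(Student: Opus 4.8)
The plan is to recognize that the two hypotheses together say exactly that $M$ is a maximal element of $\fM(\Lambda;x_0,O)$, and then to exhibit $M$ as the mass at $x_0$ of an actual representing measure. First I would unpack the hypotheses. The first inequality, $\tr(F(x_0)M)\leq\Lambda(F)$ for all $F\in\cE_\succeq$, is precisely the condition defining membership in $\fM(\Lambda;x_0,O)$, so $M\in\fM(\Lambda;x_0,O)$ (equivalently $M\in\bP(\Lambda;x_0)$ by Proposition~\ref{lpos}). The second hypothesis says that no strictly larger matrix $M'\succeq M$ can again satisfy that inequality; since any $M'\succeq M$ lies automatically in $\Hqgg$, this is just the statement that $M$ is maximal in $\fM(\Lambda;x_0,O)$ with respect to ``$\succeq$''. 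Thus the corollary asserts $\fM_{\mathrm{max}}(\Lambda;x_0,O)\subseteq\bM(\Lambda;x_0)$ for this $M$.

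Next I would produce a representing measure of $\Lambda$ whose mass at $x_0$ dominates $M$, using the constructive forward direction of the proof of Proposition~\ref{lpos}. Since $\tr(F(x_0)M)\leq\Lambda(F)$ for all $F\in\cE_\succeq$ and condition~(PC) holds, the functional $\tilde\Lambda(F):=\Lambda(F)-\tr(F(x_0)M)$ is non-negative on $\cE_\succeq$ and hence a moment functional. Choosing a representing measure $\tilde\mu$ of $\tilde\Lambda$ and setting $\mu:=\tilde\mu+M\delta_{x_0}$ gives a representing measure of $\Lambda$ with mass
\[
    \mu(\{x_0\})=\tilde\mu(\{x_0\})+M\succeq M.
\]
Write $M':=\mu(\{x_0\})$. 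Then $M'\in\bM(\Lambda;x_0)$ and $M'\succeq M$.

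The final step upgrades this domination to equality via Remark~\ref{R1308}. Since $M'\in\bM(\Lambda;x_0)\subseteq\fM(\Lambda;x_0,O)$, we have $\tr(F(x_0)M')\leq\Lambda(F)$ for all $F\in\cE_\succeq$, so $M'$ satisfies both $M'\succeq M$ and the inequality~\eqref{cond4}. The maximality hypothesis then forces $M'=M$, whence $M=\mu(\{x_0\})\in\bM(\Lambda;x_0)$, which is the claim. There is essentially no hard step here: the result is a short synthesis of Proposition~\ref{lpos} (whose proof already manufactures a dominating representing measure through condition~(PC)), Remark~\ref{R1308}, and the maximality assumption. The only point requiring care is to observe that the abstract maximality hypothesis is precisely maximality inside $\fM(\Lambda;x_0,O)$, and that the mass of the constructed measure lands back in $\fM(\Lambda;x_0,O)$, so domination can be promoted to equality.
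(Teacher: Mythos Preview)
Your argument is correct and follows essentially the same route as the paper: invoke Proposition~\ref{lpos} to place $M$ in $\bP(\Lambda;x_0)$, obtain a dominating mass $M'\in\bM(\Lambda;x_0)$ with $M'\succeq M$, check via Lemma~\ref{L0934}/Remark~\ref{R1308} that $M'$ again satisfies \eqref{cond4}, and conclude $M'=M$ from the maximality hypothesis. The only difference is cosmetic: where the paper simply cites Proposition~\ref{lpos} to get the dominating $X\in\bM(\Lambda;x_0)$, you re-run the constructive step of its proof (building $\tilde\Lambda$ and $\mu=\tilde\mu+M\delta_{x_0}$); this is redundant but harmless.
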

\begin{proof}
By Proposition~\ref{lpos} we have $M\in\bP(\Lambda;x_0)$, i.\,e., there is a matrix $X\in \bM(\Lambda;x_0)$ such that $X\succeq M$.
But $M':=X$ satisfies \eqref{cond4} as well, so $X=M'=M$ by the assumption.
\end{proof}

\begin{lem}\label{L1419}
Let $x_0\in \cX$.
Then\, $\bM_\mathrm{max}(\Lambda;x_0)=\fM_\mathrm{max}(\Lambda;x_0,O)\neq\emptyset$.
\end{lem}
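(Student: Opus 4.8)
The plan is to reduce everything to Proposition~\ref{lpos} together with a purely order-theoretic observation, and then to invoke Lemma~\ref{auxl} for non-emptiness. By Proposition~\ref{lpos} we have $\bP(\Lambda;x_0)=\fM(\Lambda;x_0,O)$, and passing to $\succeq$-maximal elements on both sides shows that $\fM_\mathrm{max}(\Lambda;x_0,O)$ is precisely the set of maximal elements of $\bP(\Lambda;x_0)$. Hence it suffices to prove that $\bM(\Lambda;x_0)$ and $\bP(\Lambda;x_0)$ share the same maximal elements.

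The decisive point is the following order-theoretic fact. Recall that $\bP(\Lambda;x_0)$ is exactly the down-set of $\bM(\Lambda;x_0)$ inside $\Hqgg$: one has $Y\in\bP(\Lambda;x_0)$ if and only if $Y\in\Hqgg$ and $X\succeq Y$ for some $X\in\bM(\Lambda;x_0)$, and in particular $\bM(\Lambda;x_0)\subseteq\bP(\Lambda;x_0)$. For a subset $\cM$ of the poset $(\Hqgg,\succeq)$ and its down-set $\cP$ (so that $\cM\subseteq\cP$), the maximal elements coincide, which I would verify by three short inclusions. (i)~If $Y$ is maximal in $\cP$, then some $X\in\cM\subseteq\cP$ satisfies $X\succeq Y$, and maximality of $Y$ forces $X=Y$, so $Y\in\cM$. (ii)~Such a $Y$ is maximal in $\cM$, because any $X\in\cM$ with $X\succeq Y$ lies in $\cP$ and hence equals $Y$. (iii)~Conversely, if $Y$ is maximal in $\cM$ and $Z\in\cP$ satisfies $Z\succeq Y$, choose $X\in\cM$ with $X\succeq Z\succeq Y$; maximality of $Y$ in $\cM$ gives $X=Y$, whence $Z=Y$, so $Y$ is maximal in $\cP$. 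Applying this with $\cM=\bM(\Lambda;x_0)$ and $\cP=\bP(\Lambda;x_0)$ yields $\bM_\mathrm{max}(\Lambda;x_0)=\fM_\mathrm{max}(\Lambda;x_0,O)$.

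For non-emptiness, note that $O\in\fM(\Lambda;x_0,O)$, so $\fM(\Lambda;x_0,O)$ is non-empty, and Lemma~\ref{auxl} guarantees it has at least one $\succeq$-maximal element. Thus $\fM_\mathrm{max}(\Lambda;x_0,O)\neq\emptyset$, and by the established equality also $\bM_\mathrm{max}(\Lambda;x_0)\neq\emptyset$. The only genuine content is Proposition~\ref{lpos} (which already encodes condition~(PC)) and Lemma~\ref{auxl}; the remainder is bookkeeping about maximal elements of a set and its down-set, so I do not anticipate any real obstacle beyond keeping these order relations straight.
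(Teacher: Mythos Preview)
Your proof is correct and follows essentially the same approach as the paper. Both arguments rest on Proposition~\ref{lpos} (identifying $\fM(\Lambda;x_0,O)$ with $\bP(\Lambda;x_0)$) and Lemma~\ref{auxl} (for non-emptiness), and then verify the two inclusions between maximal elements; you have simply packaged the two-inclusion argument as a general order-theoretic fact about a set and its down-set, whereas the paper carries out the same steps inline using Remark~\ref{R1308} and the definition of $\bP(\Lambda;x_0)$.
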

\begin{proof}
Lemma~\ref{auxl} yields $\fM_\mathrm{max}(\Lambda;x_0,O)\neq\emptyset$.

First let $M_1\in\fM_\mathrm{max}(\Lambda;x_0,O)$.
Let $M$ be an element of $\bM(\Lambda;x_0)$ such that $M\succeq M_1$.
Remark~\ref{R1308} implies $M\in\fM(\Lambda;x_0,O)$.
Hence, $M=M_1$ by the maximality of $M_1$.
Since $M_1\in\fM(\Lambda;x_0,O)$, Proposition~\ref{lpos} yields $M_1\in \bP(\Lambda;x_0)$, so there exists $M'\in \bM(\Lambda;x_0)$ such that $M'\succeq M_1$.
As above, we can infer then $M'=M_1$. In particular, this shows that $M_1\in\bM(\Lambda;x_0)$.
Consequently, $M_1\in \bM_\mathrm{max}(\Lambda;x_0)$.

Now let $M_1\in\bM_\mathrm{max}(\Lambda;x_0)$.
Then, $M_1\in\bM(\Lambda;x_0)$, so that Remark~\ref{R1308} implies $M_1\in\fM(\Lambda;x_0,O)$.
Take an element  $M\in \fM(\Lambda;x_0,O)$ such that $M\succeq M_1$.
Then Proposition~\ref{lpos} shows $M\in \bP(\Lambda;x_0)$, so there exists $M'\in\bM(\Lambda;x_0)$ such that $M'\succeq M$.
Hence $M'\succeq M\succeq M_1$. This implies $M'=M_1$ by the maximality of $M_1$.
In particular, $M=M_1$ follows.
Consequently, $M_1\in\fM_\mathrm{max}(\Lambda;x_0,O)$.
\end{proof}

\begin{lem}\label{L1346}
Let $x_0\in \cX$ and let $M_0\in \bM(\Lambda;x_0)$.
Then
\[
    \emptyset
    \neq\fM_\mathrm{max}(\Lambda;x_0,M_0)
    =\{M_1\in\bM_\mathrm{max}(\Lambda;x_0)\colon M_1\succeq M_0\}.
\]
\end{lem}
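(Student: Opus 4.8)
The plan is to prove non-emptiness first and then establish the two inclusions separately, following the same pattern as the proof of Lemma~\ref{L1419}, which is precisely the special case $M_0=O$. All the analytic work has already been done (in Lemma~\ref{auxl} via Corollary~\ref{totallycor}), so the argument here is essentially a bookkeeping exercise combining Remark~\ref{R1308}, Proposition~\ref{lpos}, and the two maximality notions.

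For non-emptiness, I would observe that $M_0\in\bM(\Lambda;x_0)$ trivially satisfies $M_0\succeq M_0$, so Remark~\ref{R1308} gives $M_0\in\fM(\Lambda;x_0,M_0)$; in particular $\fM(\Lambda;x_0,M_0)\neq\emptyset$, and Lemma~\ref{auxl} then yields a maximal element, so that $\fM_\mathrm{max}(\Lambda;x_0,M_0)\neq\emptyset$.

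For the inclusion ``$\subseteq$'', take $M_1\in\fM_\mathrm{max}(\Lambda;x_0,M_0)$. By definition $M_1\succeq M_0$, and $M_1\in\fM(\Lambda;x_0,M_0)$ means that $M_1$ satisfies \eqref{cond2}, so Proposition~\ref{lpos} gives $M_1\in\bP(\Lambda;x_0)$; hence there is $X\in\bM(\Lambda;x_0)$ with $X\succeq M_1\succeq M_0$. By Remark~\ref{R1308}, $X\in\fM(\Lambda;x_0,M_0)$, and maximality of $M_1$ forces $X=M_1$, whence $M_1\in\bM(\Lambda;x_0)$. To see that $M_1$ is even maximal there, I would take any $M\in\bM(\Lambda;x_0)$ with $M\succeq M_1$; then $M\succeq M_1\succeq M_0$, so $M\in\fM(\Lambda;x_0,M_0)$ by Remark~\ref{R1308}, and maximality of $M_1$ again gives $M=M_1$. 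Thus $M_1\in\bM_\mathrm{max}(\Lambda;x_0)$ with $M_1\succeq M_0$.

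For the reverse inclusion ``$\supseteq$'', take $M_1\in\bM_\mathrm{max}(\Lambda;x_0)$ with $M_1\succeq M_0$. Then $M_1\in\bM(\Lambda;x_0)$ and $M_1\succeq M_0$, so Remark~\ref{R1308} gives $M_1\in\fM(\Lambda;x_0,M_0)$. For maximality, suppose $M\in\fM(\Lambda;x_0,M_0)$ satisfies $M\succeq M_1$. Since $M$ satisfies \eqref{cond2}, Proposition~\ref{lpos} produces $M'\in\bM(\Lambda;x_0)$ with $M'\succeq M\succeq M_1$, and maximality of $M_1$ in $\bM(\Lambda;x_0)$ forces $M'=M_1$, hence $M=M_1$. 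Therefore $M_1\in\fM_\mathrm{max}(\Lambda;x_0,M_0)$. I expect no genuine obstacle: the only real mechanism is the repeated use of Proposition~\ref{lpos} to convert a matrix satisfying the inequality \eqref{cond2} into an honest mass dominating it, after which the two maximality hypotheses collapse every chain to equality.
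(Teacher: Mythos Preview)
Your proof is correct and follows essentially the same approach as the paper's own proof: both establish non-emptiness via Remark~\ref{R1308} and Lemma~\ref{auxl}, and then prove the two inclusions by repeatedly invoking Proposition~\ref{lpos} to produce a dominating mass in $\bM(\Lambda;x_0)$ and collapsing chains via the relevant maximality. The only cosmetic difference is the order in which you show $M_1\in\bM(\Lambda;x_0)$ and its maximality in the ``$\subseteq$'' direction, and that you appeal directly to \eqref{cond2} rather than routing through \eqref{G0814}; these are immaterial.
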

\begin{proof}
Remark~\ref{R1308} yields $M_0\in\fM(\Lambda;x_0,M_0)$.
Hence, $\fM(\Lambda;x_0,M_0)\neq\emptyset$.
Because of Lemma~\ref{auxl}, then $\fM_\mathrm{max}(\Lambda;x_0,M_0)\neq\emptyset$.

First we let $M_1\in\fM_\mathrm{max}(\Lambda;x_0,M_0)$.
Then $M_1\in\fM(\Lambda;x_0,M_0)$, so that $M_1\succeq M_0$ by Definition~\ref{D0709}.
Let $M$ be an  element of $ \bM(\Lambda;x_0)$ such that $M\succeq M_1$.
Then $M\succeq M_0$, so that Remark~\ref{R1308} implies $M\in\fM(\Lambda;x_0,M_0)$.
Therefore, $M=M_1$ by the maximality of $M_1$.
Since $M_1\in\fM(\Lambda;x_0,M_0)$, we have $M_1\in\fM(\Lambda;x_0,O)$ by \eqref{G0814}.
Hence $M_1\in \bP(\Lambda;x_0)$ by Proposition~\ref{lpos}, so there exists $M'\in \bM(\Lambda;x_0)$ such that $M'\succeq M_1$.
As above, we conclude that $M'=M_1$. In particular, $M_1\in\bM(\Lambda;x_0)$, so that $M_1\in \bM_\mathrm{max}(\Lambda;x_0)$.

Now let $M_1\in\bM_\mathrm{max}(\Lambda;x_0)$ be such that $M_1\succeq M_0$.
Then, $M_1\in\bM(\Lambda;x_0)$, so that Remark~\ref{R1308} implies $M_1\in\fM(\Lambda;x_0,M_0)$.
Let $M$ be an arbitrary element of $\fM(\Lambda;x_0,M_0)$ such that $M\succeq M_1$.
According to \eqref{G0814}, in particular $M\in\fM(\Lambda;x_0,O)$.
Then Proposition~\ref{lpos} shows $M\in \bP(\Lambda;x_0)$, so there exists $M'\in\bM(\Lambda;x_0)$ such that $M'\succeq M$.
Hence $M'\succeq M\succeq M_1$ which implies that $M'=M_1$ by the maximality of $M_1$.
In particular, $M=M_1$ follows.
Consequently, $M_1\in\fM_\mathrm{max}(\Lambda;x_0,O)$.
\end{proof}

\begin{prop}\label{maxdominating}
For each $x_0\in \cX$ and $M_0\in \bM(\Lambda;x_0)$ there exists a matrix $M_1\in \bM_\mathrm{max}(\Lambda;x_0)$ such that $M_1\succeq M_0$.
\end{prop}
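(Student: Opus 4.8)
The plan is to read this statement off almost verbatim from Lemma~\ref{L1346}, which already does all the work. Fixing $x_0\in\cX$ and $M_0\in\bM(\Lambda;x_0)$, Lemma~\ref{L1346} asserts that
\[
    \emptyset\neq\fM_\mathrm{max}(\Lambda;x_0,M_0)=\{M_1\in\bM_\mathrm{max}(\Lambda;x_0)\colon M_1\succeq M_0\}.
\]
Since this set is non-empty, any element of it is a matrix $M_1\in\bM_\mathrm{max}(\Lambda;x_0)$ with $M_1\succeq M_0$, which is exactly the claim. So I would make this a one-line invocation of Lemma~\ref{L1346}.

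If instead I wanted a self-contained argument not quoting Lemma~\ref{L1346}, I would run the same bootstrap that underlies it. First, by Remark~\ref{R1308} the hypothesis $M_0\in\bM(\Lambda;x_0)$ yields $M_0\in\fM(\Lambda;x_0,M_0)$, so $\fM(\Lambda;x_0,M_0)\neq\emptyset$. Next I would apply Lemma~\ref{auxl} to pick a maximal element $M_1$ of $\fM(\Lambda;x_0,M_0)$ with respect to ``$\succeq$''. From Definition~\ref{D0709} I immediately get $M_1\succeq M_0$ together with $\tr(F(x_0)M_1)\le\Lambda(F)$ for all $F\in\cE_\succeq$. What remains is to promote $M_1$ from a merely \emph{dominating} matrix to an actual mass, and then to verify it is maximal inside $\bM(\Lambda;x_0)$.

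The hard part — and the only place where condition~(PC) really enters — is upgrading $M_1\in\fM(\Lambda;x_0,O)$ to $M_1\in\bM(\Lambda;x_0)$. By \eqref{G0814} we have $M_1\in\fM(\Lambda;x_0,O)$, and Proposition~\ref{lpos} identifies $\fM(\Lambda;x_0,O)=\bP(\Lambda;x_0)$, so there is a genuine mass $M'\in\bM(\Lambda;x_0)$ with $M'\succeq M_1\succeq M_0$. Remark~\ref{R1308} then gives $M'\in\fM(\Lambda;x_0,M_0)$, whence $M'=M_1$ by the maximality of $M_1$; in particular $M_1=M'\in\bM(\Lambda;x_0)$. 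Finally, if $M\in\bM(\Lambda;x_0)$ satisfies $M\succeq M_1$, then $M\succeq M_0$, so $M\in\fM(\Lambda;x_0,M_0)$ again by Remark~\ref{R1308}, forcing $M=M_1$; thus $M_1\in\bM_\mathrm{max}(\Lambda;x_0)$. I expect this promotion step to be essentially the only obstacle: the existence of an abstract maximizer is handled by Zorn's lemma in Lemma~\ref{auxl}, but knowing that this maximizer is actually realized by a representing measure hinges on the identity $\bP(\Lambda;x_0)=\fM(\Lambda;x_0,O)$ from Proposition~\ref{lpos}, and hence on condition~(PC).
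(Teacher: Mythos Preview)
Your proposal is correct and matches the paper's own proof, which is literally the one-line sentence ``This is an immediate consequence of Lemma~\ref{L1346}.'' Your optional self-contained argument simply reproduces the relevant part of the proof of Lemma~\ref{L1346}, so there is nothing to add.
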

\begin{proof}
This is an immediate consequence of Lemma~\ref{L1346}.
\end{proof}

It seems that not much is known about the set $\bM(\Lambda;x_0)$ of masses.
The following natural and important questions are worth to be studied:

\begin{question}\label{Q1436}
When is the set $\bM(\Lambda;x_0)$ closed?
\end{question}

\begin{question}\label{Q1437}
When is $\bM(\Lambda;x_0)$ matrix convex, that is, when does $X_1\succeq X\succeq X_2$ with $X_1,X_2\in \bM(\Lambda;x_0)$ imply that $X\in \bM(\Lambda;x_0)$? 
\end{question}

\section{Ordered maximal mass measures}\label{orderded}
For the scalar truncated moment problem in one variable it is well-known that there exist finitely atomic representing measures such that the mass at each atom is maximal.
In the multi-dimensional scalar case it is a difficult task to find such representing measures even if the space $\cX$ is compact.
To remedy this the weaker notion of ordered maximal mass measures was proposed and an existence result was proved in \cite[Section~18.5]{sch16}.
In this section we essentially use Theorem~\ref{compactpos} to derive a similar result in the matricial case.
The corresponding definition is the following.

\begin{dfn}\label{maxmassmeasure}
Suppose $\Lambda$ is a moment functional on $\cE$ and $\nu=\sum _{j=1}^n  M_j\delta_{x_j}$, $n\in \dN$, is a representing measure for $\Lambda$.
Let $\nu_{k}:=\sum_{j=k}^n M_j\delta_{x_j}$ and $\Lambda_{k}:=\Lambda^{\nu_k}$ for $k=1,\dotsc,n$.
Then $\nu$ is called \emph{ordered maximal mass} for $\Lambda$ if for each $k=1,\dotsc,n$ the following is true:
 
If $\mu_k$ is an arbitrary representing measure for $\Lambda_{k}$ such that $\mu_k(\{x_k\})\succeq M_k$, then $\mu_k(\{x_k\})= M_k$.
\end{dfn} 
 
Recall that the matrix $\mu_k(\{x_k\})$ is called the mass of $\mu_k$ at $x_k$.
Therefore, the latter condition in Definition~\ref{maxmassmeasure} means that $M_k$ is a \emph{maximal} mass at the point $x_k$ for the representing measures of $\Lambda_k$.
In contrast to the scalar case this does not mean that $M_k$ is the \emph{largest} possible mass among all representing measures for $\Lambda_k$.
We will discuss this difference in Section~\ref{largestmasses}.

The set of atoms of $\mu\in\cM_q(\cX,\fX)$ is
\[
    \ats(\mu)
    :=\{x\in\cX\colon\mu(\{x\})\neq O\}
\]
and the set of atoms of a moment functional $\Lambda$ (cf. \cite[Definition~7.1]{ms}) is 
\[
    \cW(\Lambda)
    := \bigcup_{\mu\in\cM_\Lambda}\ats (\mu).
\]
In \cite[Theorem~8.16]{ms} it was shown that the core set $\cV(\Lambda)$ of a moment functional $\Lambda\neq0$ (cf. \cite[Definition~8.2]{ms}) coincides with $\cW(\Lambda)$.

\begin{thm}\label{constrmethod}
Suppose that $\cX$ is a compact topological space, all $F\in \cE$ are continuous on $\cX$, and there are a matrix $e\in \cE$ and an $\varepsilon >0$ such that $e(x)\succeq \varepsilon I_q$ for all $x\in \cX$.
Let $\Lambda \neq 0$ be a moment functional on $\cE$.
Then $\Lambda$ has a representing measure $\nu=\sum_{j=1}^n M_j\delta_{x_j}$, $n\leq \dim \cE$, which is ordered maximal mass.
Further, any point from the set $\cW(\Lambda)=\cV(\Lambda)$ can be taken for $x_1$.
\end{thm}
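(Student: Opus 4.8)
The key engine is Lemma~\ref{auxl} together with the results of Section~\ref{masses}, all of which are available because the hypotheses of Theorem~\ref{constrmethod} guarantee that condition~(PC) holds (by Theorem~\ref{compactpos}) and that there is a uniformly positive $e\in\cE$. I would proceed by induction on $\dim\cE$, extracting at each step an atom carrying a \emph{maximal} mass in the sense of $\bM_\mathrm{max}$, and showing that the residual functional is again a moment functional whose dimension (of the relevant span) strictly drops, so the process terminates after at most $\dim\cE$ steps.

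\emph{Construction of the first atom.} Fix any point $x_1\in\cW(\Lambda)=\cV(\Lambda)$. Since $x_1$ is an atom, there is a representing measure with nonzero mass at $x_1$, so $\bM(\Lambda;x_1)$ contains a matrix $M\neq O$, and hence $\fM(\Lambda;x_1,O)\ni M$ with $M\neq O$. By Lemma~\ref{auxl} (applied to $M_0=O$) the set $\fM(\Lambda;x_1,O)$ has a maximal element $M_1$, and by Proposition~\ref{maxdominating} we may take $M_1\succeq M\neq O$, so $M_1\neq O$; by Lemma~\ref{L1419} we have $M_1\in\bM_\mathrm{max}(\Lambda;x_1)$, i.e.\ $M_1$ is an actual mass at $x_1$ of some representing measure and it is maximal there. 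Writing $\Lambda_1:=\Lambda$ and $x_1,M_1$ as the first atom, I set $\tilde\Lambda:=\Lambda-\tr(\,\cdot\,(x_1)M_1)=\Lambda^{\mu-M_1\delta_{x_1}}$, which is again a moment functional on $\cE$ (its representing measure is $\mu-M_1\delta_{x_1}\in\cM_q(\cX,\fX)$ since $\mu(\{x_1\})=M_1$).

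\emph{Iteration and termination.} I would then repeat the construction with $\tilde\Lambda$ in place of $\Lambda$, choosing the next atom $x_2\in\cW(\tilde\Lambda)$ and a maximal mass $M_2\in\bM_\mathrm{max}(\tilde\Lambda;x_2)$, and so on. The maximality condition built into each step is exactly the condition in Definition~\ref{maxmassmeasure}: with $\Lambda_k=\Lambda^{\nu_k}$ and $\nu_k=\sum_{j\geq k}M_j\delta_{x_j}$, the fact that $M_k\in\bM_\mathrm{max}(\Lambda_k;x_k)$ says precisely that no representing measure of $\Lambda_k$ can carry a strictly larger mass at $x_k$. To bound the number of atoms by $\dim\cE$, I would invoke Theorem~\ref{richterm} at each stage, or---more cleanly---run the whole scheme starting from a finitely atomic representing measure $\nu_0=\sum_{j=1}^n N_j\delta_{y_j}$ with $n\leq\dim\cE$ provided by the matricial Richter--Tchakaloff theorem, and argue that enlarging the mass at the chosen point $x_k$ to its maximal value never increases the total number of distinct atoms.

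\emph{The main obstacle} is the bookkeeping that ensures termination with $n\leq\dim\cE$ while simultaneously realizing a \emph{prescribed} first atom $x_1\in\cW(\Lambda)$ and preserving the ordered-maximal property at every peeled stage. The delicate point is that after maximizing the mass at $x_k$ the residual functional $\Lambda_{k+1}$ must remain a genuine moment functional with a finitely atomic representation whose support does not grow; here one must use that $M_k$ was chosen as an honest mass in $\bM_\mathrm{max}(\Lambda_k;x_k)$ (not merely in $\fM_\mathrm{max}$), so that $\Lambda_k-\tr(\,\cdot\,(x_k)M_k)$ is represented by subtracting $M_k\delta_{x_k}$ from an existing representing measure of $\Lambda_k$. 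The equivalence $\bM_\mathrm{max}=\fM_\mathrm{max}(\cdot,O)$ from Lemma~\ref{L1419}, which rests on Proposition~\ref{lpos} and hence on condition~(PC), is what makes the two descriptions compatible and lets the induction close.
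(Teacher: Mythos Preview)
Your overall strategy---peel off a maximal mass at a chosen atom, show the residual is again a moment functional, and iterate---matches the paper's. The identification of $M_1\in\bM_\mathrm{max}(\Lambda;x_1)$ via Lemmas~\ref{auxl} and~\ref{L1419} is fine (the paper argues more directly with $\fM(\Lambda;x_1,O)$, but the content is the same, and both rely on condition~(PC) via Theorem~\ref{compactpos}).

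The genuine gap is the termination argument. You propose either ``induction on $\dim\cE$'' or ``invoke Theorem~\ref{richterm} at each stage'' or ``start from a finitely atomic $\nu_0$ and argue that maximizing a mass never increases the number of atoms.'' None of these works as stated. The residual $\Lambda_{k+1}$ is still a functional on the same space $\cE$, so no dimension drops and there is nothing to induct on. Applying Richter--Tchakaloff to each $\Lambda_k$ bounds the number of atoms of a representation of that one functional, but says nothing about how many iterations the peeling takes. And the claim that the maximal mass $M_k$ at $x_k$ can be realized within the support of a \emph{given} finitely atomic representation is unjustified: the representing measure achieving $M_k\in\bM_\mathrm{max}(\Lambda_k;x_k)$ may have atoms completely different from those of $\nu_0$. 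You yourself flag this as ``the main obstacle,'' and you have not cleared it.

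The paper's resolution uses the \emph{second} assertion of Lemma~\ref{auxl}, which you do not invoke: for each $k$ there is a sequence $(F_{kn})_n\subseteq\cE_\succeq$ with $\Lambda_k(F_{kn})\to 1$ and $\Lambda_{k+1}(F_{kn})\to 0$; since $\Lambda_\ell(F)\leq\Lambda_{k+1}(F)$ on $\cE_\succeq$ for all $\ell>k$, also $\Lambda_\ell(F_{kn})\to 0$ for $\ell>k$. These sequences witness that $\Lambda_1,\Lambda_2,\dotsc$ are linearly independent in $\cE^*$ (a triangular separation argument), so there can be at most $\dim\cE$ of them before the process stops. Without this ingredient or a genuine replacement, your sketch does not yield the bound $n\leq\dim\cE$, nor even finite termination.
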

\begin{proof}
Since $\Lambda\neq 0$, we have $\cW(\Lambda)\neq \emptyset$.
Take a point $x_1\in \cW(\Lambda)$.
Let $M_1$ be a maximal element of the set $\fM(\Lambda;x_1,O)$ from Lemma~\ref{auxl}.
Since $x_1\in \cW(\Lambda)$, $\Lambda_1:=\Lambda $ has a representing measure which has a mass $N_1\neq O$ at $x_1$, so $N_1\in \fM(\Lambda;x_1,O)$ and $N_1\neq O$.
Hence $M_1\neq O$, because $M_1$ is maximal in $\fM(\Lambda;x_1,O)$.
Define $\Lambda_2(F):=\Lambda_1 (F)- \tr(F(x_1)M_1)$ for $F\in \cE$.
If $(F_{1n})_{n\in \dN}$ denotes the corresponding sequence of $\cE_\succeq$ from Lemma~\ref{auxl}, then by \eqref{auxsequ} we have
\begin{align*}
    \lim_{n\to \infty} \Lambda_1(F_{1n})&=1&
    &\text{and}&
    \lim_{n\to \infty} \Lambda_2(F_{1n})&= \lim_{n\to \infty} \Lambda_1(F_{1n})- 1=0.
\end{align*}
Since $\Lambda(F)\geq \tr(F(x_1)M_1)$ for all $F\in \cE_{\succeq}$ by  Definition~\ref{D0709}, we have $\Lambda_2(F)\geq 0$ for all $F\in \cE_{\succeq}$.
Therefore, by Theorem~\ref{compactpos}, $\Lambda_2$ is a moment functional. 

Let $\mu_1$ be an arbitrary representing measure of the moment functional $\Lambda_1= \Lambda$ such that $M:=\mu_1(\{x_1\})\succeq M_1$.
Define $\Lambda'(F):=\Lambda(F)- \tr(F(x_1)M)$, $F\in \cE$.
Then $\Lambda'$ has the representing measure $\mu_1-M\delta_{x_1}$.
In particular, $\Lambda'$ is a moment functional.
Therefore, for all $F\in \cE_{\succeq}$ we have $\Lambda'(F)\geq 0$ and hence $\Lambda(F)\geq \tr(F(x_1)M)$, so that $M\in \fM(\Lambda;x_1,O)$.
Since $M\succeq M_1$ and $M_1$ is maximal in $\fM(\Lambda;x_1,O)$, it follows that $\mu_1(\{x_1\})=M=M_1$.

If $\Lambda_2= 0$, we set $n:=1$, $\nu:=M_1\delta_{x_1}$ and we are done.
If $\Lambda_2\neq 0$, we apply the preceding construction with $\Lambda_1=\Lambda$ replaced by $\Lambda_2$.
Continuing this procedure, we obtain $\kappa\in\dN\cup\{\infty\}$, a sequence $(x_k)_{k=1}^{\kappa}$ of points from $\cX$, a sequence $(M_k)_{k=1}^{\kappa}$ of matrices from $\Hqgg $, all different from $O$, and a sequence $(\Lambda_k)_{k=1}^{\kappa+1}$ of moment functionals on $\cE$, such that the following four conditions are true:
\begin{enumerate}[(I)]
    \item\label{constrmethod.I} $\Lambda_{k+1}(F)=\Lambda_k(F)-\tr(F(x_k)M_k)$ for all $F\in \cE$ and each $k\in\dN$ with $k\leq\kappa$.
    \item\label{constrmethod.II} For each $k\in\dN$ with $k\leq\kappa$, there is a sequence $(F_{kn})_{n\in \dN}$ from $\cE_\succeq$ such that $\lim_{n\to \infty} \Lambda_{k}(F_{kn})=1$ and $\lim_{n\to \infty}\Lambda_{k+1}(F_{kn})=0$.
    \item\label{constrmethod.III} If $k\in\dN$ with $k\leq\kappa$ and $\mu_{k}$ is a representing measure for $\Lambda_k$ such that $\mu_k(\{x_k\})\succeq M_k$, then $\mu_k(\{x_k\})=M_k$.
    \item\label{constrmethod.IV} $\Lambda_k\neq0$ for all $k\in\dN$ with $k\leq\kappa$ and $\Lambda_{\kappa+1}=0$ if $\kappa<\infty$.
\end{enumerate}
Since $\Lambda_s(F)\leq \Lambda_r(F)$ for all $F\in \cE_\succeq$ and $r,s\in\dN$ with $r\leq s\leq\kappa+1$ by~\ref{constrmethod.I}, the second identity in~\ref{constrmethod.II} implies that
\begin{equation}\label{limjk2}
    \lim_{n\to \infty} \Lambda_\ell(F_{kn})
    =0
    \quad\text{for all }k,\ell\in \dN\text{ with }k+1\leq\ell\leq\kappa+1.
\end{equation}

Setting $m:=\dim \cE$, we have $m<\infty$.
Next we show that $\kappa\leq m$.
Assume to the contrary that $\kappa>m$.
According to~\ref{constrmethod.IV} and~\ref{constrmethod.II}, for each $k=1,\dotsc, m+1$, then $\Lambda_k\neq 0$ and the sequence $(F_{kn})_{n\in \dN}$ is defined.
Since $m+1> \dim \cE$, the linear functionals $\Lambda_k$, $k=1,\dotsc,m+1$, on $\cE$ are linearly dependent.
Hence there are reals $\lambda_1,\dotsc,\lambda_{m+1}$, not all zero, such that 
\begin{equation}\label{relationF_j}
    \sum_{k=1}^{m+1} \lambda_k \Lambda_k(F)
    =0
    \quad \text{for all }  F\in \cE.
\end{equation} 
Let $r$ be the smallest index for which $\lambda_r\neq 0$. 
Now we set $F=F_{rn}$ in \eqref{relationF_j}.
Passing to the limit $ n\to \infty$ and using with $k=r$ the first identity in~\ref{constrmethod.II} and \eqref{limjk2}, we obtain $\lambda_r=0$, a contraction.
This proves that $\kappa\leq m$.

Setting $n:=\kappa$, we have then $n\leq\dim\cE$ and, in particular, $\kappa=n<\infty$, so that $\Lambda_{n+1}=0$ by~\ref{constrmethod.IV}.
Regarding $\Lambda_1=\Lambda$ and~\ref{constrmethod.I}, we easily see $\Lambda_{k+1}(F)    =\Lambda(F)-\sum_{j=1}^{k} \tr(F(x_j)M_j)$ for all $F\in \cE$ and $k=1,\dotsc,n$.
For $k=n$ it follows that 
\[
    \Lambda(F)
    = \sum_{j=1}^n \tr(F(x_j)M_j)
    \quad\text{for all } F\in \cE,
\]
implying in turn $\Lambda_k(F)= \sum_{j=k}^n \tr(F(x_j)M_j)$ for all $F\in \cE$ and $k=1,\dotsc,n$.
That is, $\nu:=\sum_{j=1}^n M_j\delta_{x_j}$ is a representing measure for $\Lambda$ and $\nu_{k}:=\sum_{j=k}^n M_j\delta_{x_j}$ fulfills $\Lambda_{k}=\Lambda^{\nu_k}$ for $k=1,\dotsc,n$.
Therefore, by~\ref{constrmethod.III}, $\nu$ is ordered maximal mass.
This completes the proof of Theorem~\ref{constrmethod}.
\end{proof}

\section{Largest Masses}\label{largestmasses}
The scalar version of the following result was noted in \cite{sch16}, see also \cite[Theorem~18.33(i)]{sch17}.

\begin{prop}\label{P1505}
Suppose that $\Lambda$ is a moment functional on $\Hq(E)$ which has a representing measure $\nu=M_0\delta_{x_0}+ \sum_{j=1}^k M_i\delta_{x_j}$.
If there exists a function $f\in E$ such that $f(x)\geq 0$ for all $x\in \cX$, 
\begin{equation}\label{assumptionf}
    f(x_0)> 0\text{ and }f(x_1)=\dotsb=f(x_k)=0,
\end{equation}
then $M_0$ is the largest mass of $\Lambda$ at $x_0$, that is, $M_0\succeq M$ for all $M\in \bM(\Lambda; x_0)$.
\end{prop}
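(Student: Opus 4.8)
The plan is to exploit the separating function $f$ to isolate the atom at $x_0$ by passing to the associated matrix moment functional $L\colon E\to\Hq$. Recall from the preliminaries that $\Lambda=\Lambda_L$ on $\Hq(E)$ shares its representing measures with $L$, so that $L(g)=\int_\cX g\,\dif\mu$ holds for every $g\in E$ and every representing measure $\mu$ of $\Lambda$. The whole argument then reduces to comparing two evaluations of $L(f)$.

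First I would evaluate $L(f)$ against the \emph{given} measure $\nu$. Since $f$ vanishes at $x_1,\dotsc,x_k$ by \eqref{assumptionf} and $\nu$ is supported on $\{x_0,x_1,\dotsc,x_k\}$, every term except the one at $x_0$ drops out, leaving $L(f)=\int_\cX f\,\dif\nu=f(x_0)M_0$.

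Next, fix an arbitrary $M\in\bM(\Lambda;x_0)$ and pick a representing measure $\mu$ of $\Lambda$ with $\mu(\{x_0\})=M$. Writing $\dif\mu=\Phi\,\dif\tau$ with $\tau=\tr\mu$ and $\Phi(x)\succeq O$, the hypothesis $f(x)\geq0$ makes the integrand $f\Phi$ positive semidefinite at every point. Splitting the domain of integration into $\{x_0\}$ and its complement, the complementary part contributes a matrix $\succeq O$, while the part over $\{x_0\}$ equals $f(x_0)\mu(\{x_0\})=f(x_0)M$. Hence $L(f)=\int_\cX f\,\dif\mu\succeq f(x_0)M$. (Equivalently, one tests against an arbitrary $v\in\dC^q$ and compares the resulting scalar integrals.)

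Combining the two computations gives $f(x_0)M_0=L(f)\succeq f(x_0)M$, and since $f(x_0)>0$ one divides by $f(x_0)$ to obtain $M_0\succeq M$. As $M\in\bM(\Lambda;x_0)$ was arbitrary, $M_0$ is the largest mass of $\Lambda$ at $x_0$. The one step that requires a little care is the matrix monotonicity $\int_\cX f\,\dif\mu\succeq f(x_0)M$, which rests on the $\Phi\,\dif\tau$ representation together with the positivity observed in \eqref{scalarpos}; the remaining steps are direct substitutions.
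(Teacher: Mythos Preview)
Your proof is correct. The route differs from the paper's: you pass to the matrix moment functional $L$ associated with $\Lambda=\Lambda_L$ via \eqref{lfvv}--\eqref{lfh}, evaluate $L(f)$ against the two representing measures, and obtain the matrix inequality $f(x_0)M_0=L(f)\succeq f(x_0)M$ directly from the pointwise positivity of $f\Phi$. The paper stays at the scalar level: it uses Lemma~\ref{L0934} to get $\tr(F(x_0)M_0)=\Lambda(F)\geq\tr(F(x_0)M)$ for every $F\in\Hqgg(E)$ vanishing at $x_1,\dotsc,x_k$, and then specializes to $F=f\,yy^*$ with $y$ an eigenvector of $M_0-M$ to force each eigenvalue of $M_0-M$ to be non-negative. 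Your argument is shorter and avoids the eigenvector step, at the cost of invoking the $\Lambda_L\leftrightarrow L$ correspondence and the shared representing measures (from \cite[Proposition~4.5]{ms}); the paper's argument is self-contained within the $\Lambda$-framework and reuses the already-proved Lemma~\ref{L0934}. The two are essentially dual through \eqref{lfvv}: testing your matrix inequality against $v\in\dC^q$ recovers exactly the paper's scalar inequality for $F=f\,vv^*$.
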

\begin{proof}
Let $M$ be an arbitrary matrix belonging to $\bM(\Lambda; x_0)$.
Then there exists a representing measure $\mu$ of $\Lambda$ satisfying $\mu(\{x_0\})=M$.

If $F\in \Hqgg (E)$ is a matrix such that 
\begin{equation}\label{azeros}
    F(x_1)
    =\dotsb
    =F(x_k)
    =O,
\end{equation}
then $\tr(F(x_j)M_j)=0$ for $j=1,\dotsc,k$, so that 
\[
    \tr(F(x_0)M_0)
    =\Lambda^\nu(F)
    =\Lambda(F)
    \geq \tr( F(x_0)M)
\]
by Lemma~\ref{L0934} and hence
\begin{equation}\label{auxpos}
    \tr(F(x_0)(M_0-M))
    \geq 0.
\end{equation}

Now let $\lambda$ be an eigenvalue of the Hermitian matrix $M_0-M $ with eigenvector $y$. 
We define a matrix $F(x)$ by $F(x):=f(x)yy^*$, where $f$ is as assumed in the proposition.
Since $f(x)\geq 0$ on $\cX$, we have $F\in\Hqgg (E)$.
Because $f$ satisfies \eqref{assumptionf}, it follows that \eqref{azeros} and hence \eqref{auxpos} hold.
We derive
\[\begin{split}
    \tr(F(x_0)(M_0-M ))
    &=\tr(f(x_0)yy^*(M_0-M ))
    =\tr(f(x_0)y^*(M_0-M )y)\\
    &=\tr(f(x_0) \lambda y^*y)
    =f(x_0)\lambda\lVert y\rVert^2
    \geq 0.
\end{split}\]
Since $y\neq 0$ and $f(x_0)>0$, we conclude that $\lambda \geq 0$.
Thus all eigenvalues of $M_0-M $ are non-negative, so that $M_0-M \succeq O$.
Therefore, $M_0\succeq M $.
\end{proof} 

In the remaining part of this section we construct moment functionals which have no largest mass at some point.
We begin with a finitely atomic \emph{scalar} measure 
\[
     \nu
     =\sum_{j=0}^{k} m_j \delta_{x_j}, 
\]
with $m_j\geq0$ for $j=0,\dotsc,k$ and pairwise different points $ x_j\in \cX$.

We define a linear subspace $\cE_0$ of $\cH_2(E)$ by
\begin{equation}\label{defce}
    \cE_0
    :=
    \left\{F=
     \begin{pmatrix}
         f_{11} & f_{12}\\
         f_{21} & f_{22}
     \end{pmatrix}\in \cH_2(E)\colon f_{12}(x_0)+f_{21}(x_0)=0 \right\}.
\end{equation}
Further, we consider positive semi-definite real matrices 
\[
    M_j
    \equiv
     \begin{pmatrix}
         m_{j} & w_j \\
         w_j & m_{j}
     \end{pmatrix},
     \quad j=0,\dotsc,k,
\]
and define a matrix measure
\[
    \mu
    :=\sum_{j=0}^k M_j\delta_{x_j}
    \in\cM_2(\cX,\fX).
\] 

\begin{prop}\label{nolargest}
Suppose that $m_0>0$,  $M_0$ is positive definite, and  the moment functional $l^\nu$ of $\nu$ on $E$ has maximal mass $m_0$ at $x_0$.
Then the moment functional $\Lambda^\mu$ on $\cE_0$ has no largest mass matrix at $x_{0}$. 
\end{prop}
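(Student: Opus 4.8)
The plan is to exploit the defining constraint of $\cE_0$, namely $f_{12}(x_0)+f_{21}(x_0)=0$, which renders the off-diagonal part of the mass at $x_0$ invisible to $\Lambda^\mu$. First I would record the elementary computation
\[
    \tr(F(x_0)N)
    = m_0\bigl(f_{11}(x_0)+f_{22}(x_0)\bigr)
    \quad\text{for }N=\begin{pmatrix}m_0 & w\\ w & m_0\end{pmatrix},\ F=(f_{ij})\in\cE_0,
\]
valid for \emph{every} real $w$, since the coefficient of $w$ is exactly $f_{12}(x_0)+f_{21}(x_0)=0$. Consequently, replacing $M_0$ in $\mu$ by $M_0^\pm:=\begin{pmatrix}m_0 & \pm m_0\\ \pm m_0 & m_0\end{pmatrix}$ does not change $\Lambda^\mu$ on $\cE_0$. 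Both $M_0^{\pm}$ are positive semi-definite of rank one, with eigenvalues $0$ and $2m_0$ (here $m_0>0$ is used), so the measures $\mu_\pm:=M_0^\pm\delta_{x_0}+\sum_{j=1}^{k}M_j\delta_{x_j}$ belong to $\cM_2(\cX,\fX)$ and represent $\Lambda^\mu$ on $\cE_0$. Hence $M_0^+,M_0^-\in\bM(\Lambda^\mu;x_0)$.

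Now I would suppose, for contradiction, that $\Lambda^\mu$ possesses a largest mass $M^\ast$ at $x_0$; then $M^\ast\in\bM(\Lambda^\mu;x_0)$ and $M^\ast\succeq M_0^+$ as well as $M^\ast\succeq M_0^-$. The crucial step is a matching pair of trace bounds. By monotonicity of the trace, $M^\ast\succeq M_0^{\pm}$ gives $\tr M^\ast\ge\tr M_0^{\pm}=2m_0$. For the reverse inequality I would observe that $fI_2\in\cE_0$ for every $f\in E$, so that any representing measure $\rho$ of $\Lambda^\mu$ satisfies
\[
    \int_\cX f\,\dif(\tr\rho)
    =\int_\cX\langle fI_2,\dif\rho\rangle
    =\Lambda^\mu(fI_2)
    =2\,l^\nu(f)
    \quad\text{for all }f\in E,
\]
whence $\tfrac12\tr\rho$ is a representing measure of $l^\nu$ with mass $\tfrac12\tr\bigl(\rho(\{x_0\})\bigr)$ at $x_0$. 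Applying this to a representing measure realizing $M^\ast$ and invoking the hypothesis that $m_0$ is the maximal mass of $l^\nu$ at $x_0$ yields $\tfrac12\tr M^\ast\le m_0$, i.e.\ $\tr M^\ast\le 2m_0$.

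Combining the two bounds forces $\tr M^\ast=2m_0=\tr M_0^{\pm}$. Since $M^\ast-M_0^{\pm}\succeq O$ has vanishing trace, it must be the zero matrix, so $M^\ast=M_0^+$ and simultaneously $M^\ast=M_0^-$; as $m_0>0$, these two matrices are distinct, a contradiction. Therefore no largest mass exists. I expect the main obstacle to be the upper trace bound: the point is to recognize that restricting $\Lambda^\mu$ to the diagonal subspace $\{fI_2\colon f\in E\}\subseteq\cE_0$ recovers (twice) the scalar functional $l^\nu$, which lets the hypothesis on the maximal \emph{scalar} mass at $x_0$ be transported into a bound on $\tr M^\ast$; the comparisons with $M_0^{\pm}$ are then routine trace-monotonicity arguments. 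Note that the essential hypotheses entering the argument are $m_0>0$ and the maximality of the mass of $l^\nu$ at $x_0$.
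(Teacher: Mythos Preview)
Your argument is correct. Both your proof and the paper's exploit the same core observation---that the defining constraint of $\cE_0$ makes the real part of the off-diagonal of the mass at $x_0$ invisible---but the implementations differ. The paper perturbs $M_0$ to $M_0(\alpha)=\bigl(\begin{smallmatrix}m_0 & w_0+\alpha\\ w_0+\alpha & m_0\end{smallmatrix}\bigr)$ for \emph{small} $\alpha$, and uses the positive definiteness of $M_0$ only to guarantee that $M_0(\alpha)$ stays positive definite. It then pins down the $(1,1)$ entry of a putative largest mass $A$ via the upper-left-corner scalar functional (using $\bigl(\begin{smallmatrix}f&0\\0&0\end{smallmatrix}\bigr)\in\cE_0$), obtains $a=m_0$, and derives a contradiction from $A\succeq M_0(\alpha)$ for a whole interval of $\alpha$'s. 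You instead push the off-diagonal all the way to the boundary $\pm m_0$, obtaining two rank-one matrices $M_0^{\pm}$, and use the \emph{trace} functional (via $fI_2\in\cE_0$) together with trace monotonicity to force $M^\ast=M_0^+=M_0^-$. Your route is a little sharper: as you note, it never uses the hypothesis that $M_0$ is positive definite, only that $m_0>0$, so your argument actually proves a slightly stronger statement.
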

\begin{proof}
Since $M_0$ is positive definite, $ m_{0}^2-w_0^2>0$.
Hence there exists an $\varepsilon >0$ such that for all $\alpha\in \dR$ satisfying $\lvert \alpha\rvert \leq \varepsilon$ we have
\[
    m_{0}^2-(w_0+\alpha)^2
    >0.
\]
Then the matrix
\[
    M_0(\alpha)
    :=
    \begin{pmatrix}
        m_{0} & w_0+\alpha\\
        w_0+\alpha& m_{0}
    \end{pmatrix}
\]
is also positive definite for all $\lvert \alpha\rvert \leq \varepsilon$.
Define
\[
    \mu_\alpha
    :=M_0(\alpha)\delta_{x_0}+\sum_{j=1}^k M_j\delta_{x_j}
    \in \cM_2(\cX,\fX).
\] 

Let $F\in \cE_0$.
Recall that $f_{12}(x_0)+f_{21}(x_0)=0$ by \eqref{defce}.
Using this equality we derive
\[\begin{split}
    \Lambda^{\mu_\alpha}(F)-\Lambda^\mu(F)
    &=\tr (F(x_0) M_0(\alpha))-\tr (F(x_0) M_0)\\
    &=\tr (F(x_0) [M_0(\alpha)-M_0])
    =f_{21}(x_0)+f_{12}(x_0)
    =0,
\end{split}\]
so $\mu_\alpha$ is a representing measure for the moment functional $\Lambda^\mu$ on $\cE_0$.

Now assume to the contrary that $\Lambda^\mu$ has a largest mass matrix at $x_0$, say 
\[
    A
    :=
    \begin{pmatrix}
         a & b\\
         \overline{b} & c \\
    \end{pmatrix},
\]
which is obtained for some measure $\mu'\in \cM_2(\cX,\fX)$.
Regarding \cite[Lemma~7.2]{ms}, we can assume that $\mu'$ is of the form $\mu'=A\delta_{x_0}+\sum_{i=1}^m N_i\delta_{y_i}$, where $y_i\neq x_0$ for $i=1,\dotsc,m$.
Since $\Lambda^\mu=\Lambda^{\mu'}$, the corresponding scalar moment functionals in the left upper corner coincide.
This implies that 
\begin{equation}\label{eauitau}
     l^\nu(f)
    =m_0f(x_0)+\sum_{j=1}^k m_{j}f(x_j)
    =af(x_0)+\sum_{i=1}^m n_{i}f(y_i)\quad \text{for all }f\in E,
\end{equation} 
where $n_{i}$ denotes the left upper corner entry of $N_i$.
By assumption, $m_0$ is the maximal mass of the moment functional $ l^\nu$ on $E$ at $x_0$.
Therefore, by \eqref{eauitau}, $m_0\geq a$.
On the other hand, by assumption $A$ is the largest mass matrix of $\Lambda^\mu$ at $x_0$.
Therefore, $A\succeq M_0$ and hence $a\geq m_0$.
Thus, $m_0=a$. 

Since $\Lambda^{\mu_\alpha}=\Lambda^\mu$ as shown above and $A$ is the largest mass matrix at $x_0$, we have
\[
    A-M_0(\alpha)
    =
    \begin{pmatrix}
         0 & b- (w_{0}+\alpha)\\
         \overline{b}-(w_{0}+\alpha)& 0 \\
    \end{pmatrix}
    \succeq O.
\]
Therefore, $b- (w_{0}+\alpha)=0$ for all reals $\alpha$ such that $\lvert \alpha\rvert \leq \varepsilon$, a contradiction.
This completes the proof.
\end{proof}

The following very simple examples show how extreme cases for  sets of masses  occur.

\begin{exm}\label{E1012}
Let $\cX:=\{0\}$ and let $E$ be the linear span of  the polynomial $1$.
Then $k=0$, $x_0=0$, and  
\[
    \cE_0
    =\left\{\begin{pmatrix}\alpha &-\im\beta \\\im\beta&\gamma\end{pmatrix}\colon \alpha,\beta,\gamma\in\dR\right\}.
\]
Further, we set $m_0=1$, i.\,e., $\nu=\delta_0$, and  $w_0=0$, i.\,e., $M_0=I_2$, so that $\mu=I_2\delta_0$.
Then the assumptions of Proposition~\ref{nolargest} are fulfilled.

In this case, the set of  representing measures of the moment functional $\Lambda:=\Lambda^\mu$ on $\cE_0$  consists of all  measures $\mu'=A_\eta\delta_0$, where $\eta\in \dR$, $\lvert\eta\rvert\leq 1$, and
\begin{equation*}\label{E1012.2}
    A_\eta
    :=
    \begin{pmatrix}1 &\eta \\\eta&1\end{pmatrix}
    =M_0+\eta\begin{pmatrix}0 &1\\1&0\end{pmatrix}.
\end{equation*} 
Clearly, any such $\mu'=A_\eta\delta_0$ is a representing measure of $\Lambda$.
Let $\mu'$ be an arbitrary representing measure of $\Lambda$.
Since $\cX=\{0\}$, we have $\mu'=A\delta_0$ for some matrix $A=(\begin{smallmatrix}a &b \\\overline{b}&c\end{smallmatrix})\in \cH_{2,\succeq}$.
The equality $\Lambda=\Lambda^{\mu'}$ means  that $\alpha+\gamma=\alpha a +\im \beta\, b -\im \beta\, \overline{b} +\gamma c$ for all $\alpha,\beta,\gamma\in\dR$.
The latter is equivalent to $a=c=1$ and $b\in \dR$, i.\,e., $A=A_b$.
Since $A\succeq O$, $\lvert b\rvert\leq 1$.

Thus, $\bM(\Lambda;x_0)$ is the set of matrices $A_\eta$, where $\eta\in[-1,1]$.
Hence the set $\bM(\Lambda;x_0)$ is closed and it is an antichain with respect to ``$\succeq$'' (that is, if $X_1,X_2\in \bM(\Lambda;x_0)$ with $X_1\neq X_2$, then $X_1\nsucceq X_2$ and $X_2\nsucceq X_1$). In particular, we have $\bM_\mathrm{max}(\Lambda;x_0)=\bM(\Lambda;x_0)$.
\end{exm} 

\begin{exm}\label{E1802}
Suppose $\cX$ consists of the following four points of $\dR^2$: 
\begin{align*}
    x_0&=(0,0),&
    x_1&=(1,0),&
    x_2&=(0,1),&
    x_3&=(1,1).
\end{align*}
Let $E$ be the linear span of  polynomials $1,x,x^2,y,y^2$.
We easily compute 
\begin{equation}\label{linrela}
    \ell_{x_0}+\ell_{x_3}
    =\ell_{x_1}+\ell_{x_2}.
\end{equation}
Suppose that $M_0, M_1, M_2, M_3$ are matrices of $\cH_{2,\succeq}$ and set
\[
    \mu
    :=M_0\delta_{x_0}+	M_1 \delta_{x_1}+	M_2 \delta_{x_2}+	M_3\delta_{x_3}.
\]
Let $\mu'$ be an arbitrary representing measure of the moment functional $L:=L^\mu$ on $E$.
Since $\cX=\{ x_0,x_1,x_2,x_3\}$, then we can assume that 
\begin{equation}\label{muprime}
    \mu'
    =M_0'\delta_{x_0}+	M_1' \delta_{x_1}+	M_2' \delta_{x_2}+	M_3'\delta_{x_3}.
\end{equation} 
Let $M_k=(m_{ij;k})_{i,j=1}^2$ and $M_k'=(m_{ij;k}')_{i,j=1}^2$ for $k=0,1,2,3$.
The equality $L^{\mu'}=L$ means that for $i,j=1,2$ we have
\[
    \sum_{k=0}^3 m_{ij;k}' f(x_k)
    =\sum_{k=0}^3 m_{ij;k} f(x_k)
    \quad\text{for all } f\in E.
\]
A simple computation shows that this equation implies that there is a real number $a_{ij}$ such that $m_{ij;k}'= m_{ij;k}+a_{ij}$ for $k=0,3$ and $m_{ij;k}'= m_{ij;k}-a_{ij}$ for $k=1,2$.
Therefore, if $A$ denotes the matrix $(a_{ij})_{i,j=0}^2$, then 
\begin{align}\label{mprime}
    M_0'&=M_0+A,&
    M_1'&=M_1-A,&
    M_2'&=M_2-A,&
    M_3'&=M_3+A.
\end{align}
In particular, $A\in \cH_2$.
Conversely, from \eqref{linrela} it follows that for any matrix $A\in \cH_2$ such that the matrices $M_0', M_1', M_2', M_3'$ defined by \eqref{mprime} are in $\cH_{2,\succeq}$, equation \eqref{muprime} gives a representing measure $\mu'$ of $L$.
Consequently, all representing matrix measures $\mu'$ of $L$ are known.
They are parametrized by the matrices $A\in \cH_2$ for which all four matrices $M_k'$ given by \eqref{mprime} are in $\cH_{2,\succeq}$, or equivalently,
\begin{equation}\label{inequmi}
    M_1
    \succeq A\succeq -M_0
    \text{ and }
    M_2
    \succeq A\succeq - M_3.
\end{equation}

Regarding Definition~\ref{D2116} and \cite[Proposition~4.5]{ms}, thus $\bM(\Lambda_L;x_0)$ is the set of matrices $M_0+A$, where $A\in \cH_2$ satisfies \eqref{inequmi}.
In particular, this implies that the set $\bM(\Lambda_L;x_0)$ is closed and order-convex (that is, if $X_1\preceq X\preceq X_2$ with $X_1,X_2\in \bM(\Lambda_L;x_0)$, then $X\in \bM(\Lambda_L;x_0)$).

For instance, in the special case $M_0=M_1=M_2=M_3:=I$, we have
\[
    \bM(\Lambda_L;x_0)
    =\{M\in \cH_2\colon: O\preceq M\preceq 2I \}
    =[O,2 I],
\]
so that $\bM_\mathrm{max}(\Lambda_L;x_0)=\{2I\}$.
\end{exm}

\section{Commutative matrix moment functionals}\label{commuative}
In this section we study the very special class of matrix moment functionals $L$ on $E$ for which $L(f)L(g)=L(g)L(f)$ for all $f,g\in E$.

We begin with a number of standard notions.

Let $L_1\colon E\to \cH_{q_1}, \dotsc, L_m\colon E\to \cH_{q_m}$, $m\in \dN$, be linear mappings.
The linear mapping $L_1\oplus\dotsb \oplus L_m\colon E\to \cH_{q_1+\dotsb+q_m}$ defined by
\[
    (L_1\oplus\dotsb \oplus L_m)(f)
    :=L_1(f)\oplus \dotsb\oplus L_{m}(f),
    \quad f\in E,
\]
is called the \emph{direct sum} of the linear mappings $L_1,\dotsc,L_m$. 
 
We say two mappings $L,L'\colon E\to \Hq$ are \emph{unitarily equivalent} if there exists a unitary matrix $U\in M_q(\dC)$ such that $L'(f)=U^*L(f)U$ for all $f\in E$.
 
Matrix moment functionals $L\colon E\to \cH_1\cong \dR$ are called \emph{scalar} moment functionals.
We will denote scalar moment functionals by $l$.
In particular, $ l^\mu$ is the moment functional on $E$ with representing measure $\mu$.

\begin{dfn}\label{defcodi}
A matrix moment functional $L$ on $E$ is called
\begin{itemize}
    \item \emph{commutative} if $L(f)L(g)=L(g)L(f)$ for all $f,g\in E$
    \item \emph{diagonalizable} if it is unitarily equivalent to a direct sum of scalar moment functionals.
\end{itemize}
\end{dfn}

\begin{prop}\label{L0836}
For any moment functional $\Lambda\neq 0$ on $\cE$ the following statements are equivalent:
\begin{enumerate}[(i)]
    \item\label{L0836.ii} There exists an orthonormal basis $\{u_1,\dotsc,u_q\}$ of $\dC^q$ and scalar moment functionals $l_1,\dotsc,l_q$ on $E:=\{v^*Fv\colon F\in\cE,\,v\in\dC^q\}$, such that $\Lambda(F)=\sum_{r=1}^ql_r(u_r^*Fu)$ for all $F\in\cE$. 
    \item\label{L0836.iii} There exists a finitely atomic representing measure $\nu=\sum_{j=1}^k M_j\delta_{x_j}$ of $\Lambda$ such that $M_iM_j=M_jM_i$ for all $i,j=1,\dotsc,k$.
    \item\label{L0836.iv} There exists a representing measure $\mu$ of $\Lambda$, with Radon--Nikodym matrix $\Phi$ with respect to the trace measure, such that $\Phi(x)\Phi(y)=\Phi(y)\Phi(x)$ for all $x,y\in \cX$.
\end{enumerate}
\end{prop}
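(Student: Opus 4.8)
The plan is to establish the cycle (i) $\Rightarrow$ (iii) $\Rightarrow$ (ii) $\Rightarrow$ (i). The common thread is that commutativity of a family of Hermitian matrices is equivalent to their simultaneous diagonalizability by a \emph{single} unitary, so throughout I pass to a common orthonormal eigenbasis $\{u_1,\dots,u_q\}$ of $\dC^q$ and exploit that, in this basis, the relevant matrices are diagonal. I present the implications in order of increasing difficulty, ending with (iii) $\Rightarrow$ (ii), where I expect the only genuine obstacle to lie.

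For (i) $\Rightarrow$ (iii): let $\sigma_r$ be a scalar positive representing measure of $l_r$, so $l_r(g)=\int_\cX g\,\dif\sigma_r$ for $g\in E$, and set $\mu:=\sum_{r=1}^q \sigma_r\, u_ru_r^*\in\cM_q(\cX,\fX)$. Since the $u_ru_r^*$ are mutually orthogonal rank-one projections, $\mu$ is diagonal in the basis $\{u_r\}$; hence its Radon--Nikodym matrix $\Phi$ with respect to $\tau=\tr\mu=\sum_r\sigma_r$ is diagonal, and consequently $\Phi(x)\Phi(y)=\Phi(y)\Phi(x)$ for all $x,y$. A short computation gives $\int_\cX\langle F,\dif\mu\rangle=\sum_r\int_\cX (u_r^*Fu_r)\,\dif\sigma_r=\sum_r l_r(u_r^*Fu_r)=\Lambda(F)$, and the integrability $\cE\subseteq L^1(\mu;\Hq)$ follows from $u_r^*Fu_r\in E\subseteq L^1(\sigma_r)$. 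Thus $\mu$ is a representing measure of the required form.

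For (ii) $\Rightarrow$ (i): diagonalize the pairwise commuting matrices $M_1,\dots,M_k\succeq O$ simultaneously, writing $M_j=\sum_{r=1}^q\lambda_{j,r}\,u_ru_r^*$ with $\lambda_{j,r}\geq 0$ in a common orthonormal basis $\{u_r\}$. Then
\[
    \Lambda(F)=\sum_{j=1}^k\tr(F(x_j)M_j)=\sum_{r=1}^q\sum_{j=1}^k\lambda_{j,r}\,u_r^*F(x_j)u_r,
\]
so the functionals $l_r(g):=\sum_{j=1}^k\lambda_{j,r}\,g(x_j)$ on $E$ are scalar moment functionals (with the finitely atomic, hence trivially integrable, representing measures $\sum_j\lambda_{j,r}\delta_{x_j}$) and satisfy $\Lambda(F)=\sum_r l_r(u_r^*Fu_r)$.

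The heart of the proof is (iii) $\Rightarrow$ (ii). Since the $\Phi(x)$ pairwise commute and their complex linear span is finite-dimensional, a single unitary diagonalizes all of them; fix the corresponding common orthonormal eigenbasis $\{u_1,\dots,u_q\}$ and write $F^\flat:=\sum_r(u_r^*Fu_r)u_ru_r^*$ for the diagonal part of an $\Hq$-valued $F$. Because each $\Phi(x)$ is diagonal, $\langle F,\Phi\rangle=\langle F^\flat,\Phi\rangle$, so $\Lambda$ depends only on $F^\flat$ and descends to a well-defined functional on the finite-dimensional space $\cE^\flat:=\{F^\flat\colon F\in\cE\}$; this functional is represented by the \emph{same} measure $\mu$, with $\cE^\flat\subseteq L^1(\mu;\Hq)$ holding via $\langle F^\flat,\Phi\rangle=\langle F,\Phi\rangle\in L^1(\tau)$. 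I stress this reduction, since it is the point where naive approaches fail: one cannot split $\Lambda$ into $q$ scalar moment functionals by integrating $u_r^*Fu_r$ against $\lambda_r\,\dif\tau$ individually, as these integrals may diverge even though their sum converges; the conditional-expectation-type passage to $\cE^\flat$ is what circumvents this integrability failure. Applying the matricial Richter--Tchakaloff theorem (Theorem~\ref{richterm}) to the functional on $\cE^\flat$ yields a finitely atomic representing measure $\sum_j N_j\delta_{y_j}$ with $N_j\in\Hqgg$. Finally, replacing each $N_j$ by its diagonal part $N_j^\flat\in\Hqgg$ is legitimate because all test functions are diagonal, so that $\tr(F^\flat N_j)=\tr(F^\flat N_j^\flat)=\tr(F N_j^\flat)$, and the matrices $N_j^\flat$, being simultaneously diagonal, pairwise commute. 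Hence $\nu:=\sum_j N_j^\flat\delta_{y_j}$ is a finitely atomic representing measure of $\Lambda$ with commuting masses, which is (ii).
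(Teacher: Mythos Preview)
Your proof is correct, and your implication (ii)$\Rightarrow$(i) is essentially identical to the paper's. The other two implications, however, are argued differently.

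For (i)$\Rightarrow$(iii) you go directly: form $\mu=\sum_r\sigma_r\,u_ru_r^*$ from scalar representing measures $\sigma_r$ of the $l_r$, so that the Radon--Nikodym matrix is diagonal in the fixed basis and hence commutative everywhere. The paper instead passes through (ii): it applies the scalar Richter--Tchakaloff theorem to each $l_r$, collects the finitely many atoms, and assembles commuting masses $M_j=UC_jU^*$ with $C_j$ diagonal; (iii) then follows from (ii) by observing that the Radon--Nikodym matrix of a finitely atomic measure at an atom is just the normalized mass. Your route is slightly more direct and avoids invoking Richter--Tchakaloff at this stage.

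The more substantial difference is in (iii)$\Rightarrow$(ii). The paper does not argue as you do; it simply cites that inside the proof of \cite[Theorem~5.1]{ms} (the matricial Richter--Tchakaloff theorem) the masses of the constructed finitely atomic measure are of the form $M_j=m_j\Phi(y_j)$ for certain $y_j\in\cX$ and $m_j\geq 0$, so the commutativity of the $\Phi(x)$ transfers immediately to the $M_j$. This is a one-line appeal to an external proof. Your argument is self-contained: you diagonalize all $\Phi(x)$ simultaneously, pass to the ``diagonal part'' space $\cE^\flat$, apply the matricial Richter--Tchakaloff theorem as a black box to that auxiliary moment functional, and then replace the resulting masses by their diagonal parts $N_j^\flat$. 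The key identity $\tr(F(y_j)N_j^\flat)=\tr(F^\flat(y_j)N_j^\flat)=\tr(F^\flat(y_j)N_j)$ is what makes this legitimate, and your observation that $N_j^\flat\succeq O$ (diagonal entries of a positive semidefinite matrix in an orthonormal basis are nonnegative) closes the argument. Your approach costs a few more lines but has the advantage of not relying on internal details of another proof; the paper's approach is terser but less self-contained.
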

\begin{proof}
\ref{L0836.ii}$\to$\ref{L0836.iii}:
By the (scalar) Richter--Tchakaloff theorem, each moment functional $ l_r$ has a finitely atomic representing measure $\nu_r$.
Let $\{x_1,\dotsc,x_k\}$ be the set of all atoms of these $q$ measures.
Then each measure $\nu_r$ is of the form $\nu_r=\sum_{j=1}^k c_{rj}\delta_{x_j}$, where $c_{rj}\geq 0$ for $r=1,\dotsc,q$, $j=1,\dotsc, k$.
(Note that if $x_j$ is not an atom of $\nu_r$, we set $c_{rj}:=0$.)
Let $C_j\in \Hqgg $ denote the diagonal matrix with diagonal entries $c_{1j},\dotsc,c_{qj}$ and put $M_j:=UC_jU^*=\sum_{r=1}^qc_{rj}u_ru_r^*$, where $U:=(u_1,\dotsc,u_q)$.
Since the diagonal matrices $C_j$ pairwise commute and $U$ is unitary, the matrices $M_j$ pairwise commute as well.
Furthermore, $\nu:=\sum_{j=1}^k M_j\delta_{x_j}$ is a representing measure of $\Lambda$, since
\[\begin{split}
    \Lambda^\nu(F)
    &=\sum_{j=1}^k\tr(F(x_j)M_j)
    =\sum_{j=1}^k\sum_{r=1}^qc_{rj}\tr(F(x_j)u_ru_r^*)\\ &=\sum_{r=1}^q\sum_{j=1}^kc_{rj}u_r^*F(x_j)u_r
    =\sum_{r=1}^ql_r(u_r^*Fu_r)
    =\Lambda(F)
\end{split}\]
for all $F\in\cE$ by~\ref{L0836.ii}.
That is,~\ref{L0836.iii} is proved. 
 
\ref{L0836.iii}$\to$\ref{L0836.ii}: 
The matrices $M_1,\dotsc,M_k$ belong to $\Hqgg$ and  pairwise commute by~\ref{L0836.iii}.
Therefore, these matrices have a common set of eigenvectors, say $u_1,\dotsc,u_q$, which form an orthonormal basis of $\dC^q$.
Then, for $j=1,\dotsc,k$, there exist non-negative real numbers $ m_{j1},\dotsc, m_{jq}$ such that $M_j=\sum_{r=1}^q m_{jr}u_ru_r^*$. 
For $r=1,\dotsc,q$, we see that $l_r\colon E\to\dR$ defined by $l_r(f):=\sum_{j=1}^k m_{jr}f(x_j)$ is a scalar moment functional, since $ m_{jr}\geq0$.
Using~\ref{L0836.iii} we derive 
\[\begin{split}
    \Lambda(F)
    =\Lambda^\nu(F)
    &=\sum_{j=1}^k\tr(F(x_j)M_j)
    =\sum_{j=1}^k\sum_{r=1}^q m_{jr}\tr(F(x_j)u_ru_r^*)\\
    &=\sum_{r=1}^q\sum_{j=1}^k m_{jr}u_r^*F(x_j)u_r
    =\sum_{r=1}^ql_r(u_r^*Fu_r)
\end{split}\]
for all $F\in\cE$.
Consequently,~\ref{L0836.ii} is proved. 

\ref{L0836.iii}$\to$\ref{L0836.iv}:
Without loss of generality we can assume that $M_j\neq O$ for all $j$.
Since $M_j\succeq O$, then $\tr M_j\neq 0$ and for the Radon--Nikodym matrix $\Phi$ of $\nu$ with respect to the trace measure of $\nu$ we have $\Phi(x_j)=(\tr M_j)^{-1} M_j$ for $j=1,\dotsc,k$ and $\Phi(x)=O$ otherwise (see \cite[Example~2.2]{ms}).
Thus,~\ref{L0836.iii} implies~\ref{L0836.iv}.

\ref{L0836.iv}$\to$\ref{L0836.iii}: 
In the proof of Theorem~5.1 given in \cite{ms} it was shown that $\Lambda^\mu$ has a finitely atomic representing measure $\nu$ such that all masses are of the form $M_j=m_j\Phi(y_j)$, see formula (5.5) in \cite{ms}.
Hence~\ref{L0836.iv} implies~\ref{L0836.iii}.
\end{proof}

The following theorem shows that a number of natural conditions on matrix moment functionals are equivalent.

\begin{thm}\label{cummcond}
For a matrix moment functional $L$, $L\neq O$, on $E$ the following are equivalent:
\begin{enumerate}[(i)]
    \item\label{cummcond.i} $L$ is commutative, that is, $L(f)L(g)=L(g)L(f)$ for all $f,g\in E$.
    \item\label{cummcond.ii} $L$ is diagonalizable. 
    \item\label{cummcond.v} There are an orthonormal basis $\{u_1,\dotsc,u_q\}$ of $\dC^q$ and scalar moment functionals $l_1,\dotsc,l_q$ on $E$ such that $\Lambda_L(F)=\sum_{r=1}^ql_r(u_r^*Fu)$ for all $F\in\cH_q(E)$.
    \item\label{cummcond.iii} There exists a finitely atomic representing measure $\nu=\sum_{j=1}^k M_j\delta_{x_j}$ of $L$ such that $M_iM_j=M_jM_i$ for all $i,j=1,\dotsc,k$.
    \item\label{cummcond.iv} There exists a representing measure $\mu$ of $L$, with Radon--Nikodym matrix $\Phi$ with respect to the trace measure, such that $\Phi(x)\Phi(y)=\Phi(y)\Phi(x)$ for all $x,y\in \cX$.
\end{enumerate}
\end{thm}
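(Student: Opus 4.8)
The plan is to reduce the bulk of the statement to Proposition~\ref{L0836}, which already records the equivalence of the three measure-theoretic conditions, and to attach the two genuinely new conditions---commutativity~\ref{cummcond.i} and diagonalizability~\ref{cummcond.ii}---by elementary linear algebra together with the correspondence between $L$ and $\Lambda_L$ recalled in Section~\ref{prelim}.

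First I would apply Proposition~\ref{L0836} with $\cE:=\Hq(E)$ and $\Lambda:=\Lambda_L$. By \cite[Proposition~4.5]{ms} the map $L$ is a matrix moment functional precisely when $\Lambda_L$ is a moment functional, with the same representing measures, and $L\neq O$ is equivalent to $\Lambda_L\neq0$, so the hypotheses of Proposition~\ref{L0836} hold. For this choice of $\cE$ the space $\{v^*Fv\colon F\in\cE,\,v\in\dC^q\}$ occurring there equals $E$: every $f\in E$ arises as $e_1^*(fe_{11})e_1$, while conversely $v^*Fv\in E$ for each $F\in\Hq(E)$. Hence the three items of Proposition~\ref{L0836} are, verbatim, the items \ref{cummcond.v}, \ref{cummcond.iii}, and~\ref{cummcond.iv}, whose mutual equivalence is thereby established.

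It remains to link these to~\ref{cummcond.i} and~\ref{cummcond.ii}. The implication \ref{cummcond.ii}$\to$\ref{cummcond.i} is immediate, since matrices simultaneously diagonalized by one unitary commute. For \ref{cummcond.i}$\to$\ref{cummcond.ii} I would fix a basis $f_1,\dotsc,f_d$ of $E$; the Hermitian matrices $L(f_1),\dotsc,L(f_d)$ pairwise commute by~\ref{cummcond.i}, hence are simultaneously diagonalizable by a single unitary $U=(u_1,\dotsc,u_q)$, so that by linearity
\[
    L(f)=\sum_{r=1}^q l_r(f)\,u_ru_r^*\quad\text{for }f\in E,\qquad l_r(f):=u_r^*L(f)u_r.
\]
Choosing any representing measure $\mu$ of $L$ gives $l_r(f)=\int_\cX f\,\dif(u_r^*\mu u_r)$ with $u_r^*\mu u_r$ a positive scalar measure, so each $l_r$ is a scalar moment functional and $L$ is diagonalizable. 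To pass between~\ref{cummcond.ii} and~\ref{cummcond.v} I would feed the diagonal form of $L$ into formula~\eqref{lfh}: for $f\in E$ and $H\in\Hq$,
\[
    \Lambda_L(fH)=\tr(HL(f))=\sum_{r=1}^q l_r(f)\,u_r^*Hu_r=\sum_{r=1}^q l_r\bigl(u_r^*(fH)u_r\bigr);
\]
since the products $fH$ span $\Hq(E)$---via $F=\sum_{j,k}\langle F,H_{jk}\rangle H_{jk}$ with coefficients $\langle F,H_{jk}\rangle\in E$---linearity gives~\ref{cummcond.v}, and reading the same computation backwards recovers the diagonal form of $L$ from~\ref{cummcond.v} using~\eqref{lfvv}.

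I expect no genuine obstacle here; the only point demanding care is the two-sided translation between $L$ and $\Lambda_L$ in \ref{cummcond.ii}$\Leftrightarrow$\ref{cummcond.v}, where one must match the ambient function spaces and record that the products $fH$ span $\Hq(E)$. Granting this, the whole argument rests on the simultaneous diagonalizability of a commuting family of Hermitian matrices and on the linearity of $\Lambda_L$.
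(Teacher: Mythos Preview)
Your proposal is correct and rests on the same two pillars as the paper's proof: Proposition~\ref{L0836} for the equivalence of \ref{cummcond.v}, \ref{cummcond.iii}, \ref{cummcond.iv}, and simultaneous diagonalization of a commuting family of Hermitian matrices for \ref{cummcond.i}$\to$\ref{cummcond.ii}. The organization differs slightly. To connect the block $\{\ref{cummcond.i},\ref{cummcond.ii}\}$ with the block $\{\ref{cummcond.v},\ref{cummcond.iii},\ref{cummcond.iv}\}$, the paper goes \ref{cummcond.ii}$\to$\ref{cummcond.iii} by applying the scalar Richter--Tchakaloff theorem to each $l_r$ and assembling the resulting atomic measures into a single $\nu$ with commuting diagonal masses, then closes the loop via the trivial \ref{cummcond.iii}$\to$\ref{cummcond.i}. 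You instead link \ref{cummcond.ii}$\leftrightarrow$\ref{cummcond.v} directly by the trace computation $\Lambda_L(fH)=\tr(HL(f))$ from~\eqref{lfh}, which is a clean use of the $L\leftrightarrow\Lambda_L$ dictionary and avoids rebuilding the commuting masses by hand. In your \ref{cummcond.i}$\to$\ref{cummcond.ii} step you also bypass the matricial Richter--Tchakaloff theorem that the paper invokes: taking any representing measure $\mu$ of $L$ and compressing it to the scalar measure $u_r^*\mu u_r$ already exhibits each $l_r$ as a scalar moment functional. Both routes are short; yours is marginally more economical.
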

\begin{proof}
\ref{cummcond.i}$\to$\ref{cummcond.ii}: 
Let us fix a basis $\{f_1,\dotsc,f_n\}$ of the vector space $E$.
Then $L(f_1),\dotsc,L(f_n)$ are pairwise commuting (by assumption~\ref{cummcond.i}) Hermitian matrices of $\Hq$.
Therefore, these matrices have a common set of eigenvectors, say $v_1,\dotsc,v_q$, which form an orthonormal basis of $\dC^q$.
Then there exist real numbers $\lambda_{ir}$, where $i=1,\dotsc,n$, $ r=1,\dotsc,q$ such that $L(f_i)v_r=\lambda_{ir}v_r$. 

Since $\{f_1,\dotsc,f_n\}$ is a vector space basis of $E$, each $f\in E$ is of the form $f=\sum_{i=1}^n\alpha_if_i$, with $\alpha_1,\dotsc,\alpha_n\in \dR$ uniquely determined by $f$.
For $r=1,\dotsc,q$, we define $ l_r\colon E\to\dR$ by $l_r(f):=\sum_{i=1}^n\alpha_i\lambda_{ir}$.
Then  $L(f)v_r=\sum_{i=1}^n\alpha_iL(f_i)v_r = l_r(f)v_r$ and hence 
\begin{align}\label{Lr1}
    \langle L(f)v_r,v_s\rangle
    = l_r(f)\delta_{rs}
    \quad \text{for all }f\in E\text{ and }r,s=1,\dotsc,q. 
\end{align}
By assumption, $L$ is a matrix moment functional.
Hence, by the matricial Richter--Tchakaloff theorem \cite[Corollary~5.3]{ms}, $L$ has a finitely atomic representing measure $\nu=\sum_{j=1}^k M_j\delta_{x_j}$, where $x_1,\dotsc,x_k\in \cX$ and $M_1,\dotsc,M_k\in \Hqgg $.
Then, for $f\in E$, 
\begin{equation}\label{Lr2}
     l_r(f)
    =\langle L(f)v_r,v_r \rangle
    =\langle L^\nu(f)v_r,v_r\rangle
    =\sum_{j=1}^k f(x_j)\langle M_jv_r,v_r\rangle
    = l^{\nu_r}(f)
\end{equation} 
where $\nu_r:=\sum_{j=1}^k \langle M_jv_r,v_r\rangle \delta_{x_j}$.
Note that $ \langle M_jv_r,v_r\rangle \geq 0$, because $M_j\in \Hqgg $.
By \eqref{Lr2}, $ l_r$ is a (scalar) moment functional on $E$ with representing measure $\nu_r$.
Let $U\in M_q(\dC)$ denote the unitary matrix defined by $Uv_r=e_r$, where $r=1,\dotsc,q$.
Using \eqref{Lr1} it is straightforward to verify that $L(f)=U^*[l_1(f)\oplus\dotsb\oplus l_q(f)]U$ for all $f\in E$.
Thus $L$ is diagonalizable.

\ref{cummcond.ii}$\to$\ref{cummcond.iii}:
Since $L$ is diagonalizable, there exist a unitary matrix $U\in M_q(\dC)$ and (scalar) moment functionals $ l_1,\dotsc, l_q$ on $E$ such that $L=U^*( l_1\oplus\dotsb\oplus l_q)U$.
By the (scalar) Richter--Tchakaloff theorem, each moment functional $ l_r$ has a finitely atomic representing measure $\nu_r$.
Let $\{x_1,\dotsc,x_k\}$ be the set of all atoms of these $q$ measures.
Then each measure $\nu_r$ is of the form $\nu_r=\sum_{j=1}^k c_{rj}\delta_{x_j}$, where $c_{rj}\geq 0$ for all $r=1,\dotsc,q$, $j=1,\dotsc, k$.
(Note that if $x_j$ is not an atom of $\nu_r$, we set $c_{rj}:=0$.)
Let $M_j'\in \Hqgg $ denote the diagonal matrix with diagonal entries $c_{1j},\dotsc,c_{qj}$ and put $M_j:=U^*M_j'U$.
Then, $\nu:=\sum_{j=1}^k M_j\delta_{x_j}$ is a representing measure of $L=U^*( l_1\oplus\dotsb\oplus l_q)U$.
Since the diagonal matrices $M_j'$ pairwise commute, so do the matrices $M_j=U^*M_j'U$.
That is,~\ref{cummcond.iii} is proved. 
 
\ref{cummcond.iii}$\to$\ref{cummcond.i} is clear, because $L(f)=L^\nu(f)=\sum_{j=1}^k f(x_j)M_j$ for all $f\in E$.

\ref{cummcond.v}$\leftrightarrow$\ref{cummcond.iii}$\leftrightarrow$\ref{cummcond.iv}:
We have $E=\{v^*Fv\colon F\in\cH_q(E),\,v\in\dC^q\}$.
Hence, we can apply Proposition~\ref{L0836} to $\Lambda_L$ and use \cite[Proposition~4.5]{ms}.
\end{proof}

\section{Transport of linear functionals by positive maps}\label{transport}
Representing measures of moment functionals and of matrix moment functionals are positive $\Hq$\nobreakdash-valued measures on $(\cX,\fX)$.
In this section we transform moment functionals and measures under positive linear mappings of matrices.

First we recall a number of well-known notions and facts on positive maps.

\begin{dfn}\label{D1826}
A linear map $\phi\colon M_q(\dC)\to M_p(\dC)$ is called \emph{positive} if $\phi(A)\in\mathcal{H}_{p,\succeq}$ for all $A\in\Hqgg $.
\end{dfn}

If $\phi\colon M_q(\dC)\to M_p(\dC)$ is a positive map, then we have $\phi(A^*)=[\phi(A)]^*$ for all $A\in M_q(\dC)$; in particular, $\phi$ maps $\Hq $ into $\mathcal{H}_p$.

For a linear map $\phi\colon M_q(\dC)\to M_p(\dC)$, the \emph{adjoint} $\phi^\dagger$ is defined by 
\[
    \langle \phi(A),B\rangle
    =\langle A,\phi^\dagger(B)\rangle,
    \quad A\in M_q(\dC),\, B\in M_p(\dC).
\]
Then $\phi$ is positive if and only if $\phi^\dagger$ is positive. 

\begin{exm}\label{E1828}
The linear map $M_q(\dC)\owns A\mapsto\phi(A):=\tr A\in\dC$ is positive and its adjoint $\phi^\dagger\colon \dC\to M_q(\dC)$ is given by $\phi^\dagger(b)= b\cdot I_q\in M_q(\dC)$, $b\in \dC$.
\end{exm}

As usual, let $M_{q,p}(\dC)$ denote the $q\times p$~matrices with complex entries.

\begin{exm}\label{exvj}
Let $k\in\dN$ and $V_1,\dotsc,V_k\in M_{q,p}(\dC)$.
Then 
\begin{equation}\label{choi}
    \phi(A)
    :=\sum_{j=1}^kV_j^* AV_j,
    \quad A\in M_q(\dC),
\end{equation}
defines a positive map $\phi\colon M_q(\dC)\to M_p(\dC)$ with adjoint $\phi^\dagger \colon M_p(\dC)\to M_q(\dC)$ given by $\phi^\dagger (B)=\sum_{j=1}^kV_j BV_j^*$, $B\in M_p(\dC)$.
\end{exm}

\begin{exm}\label{E1148}
Let $k\in\dN$ and let $V\in M_{pq,p}(\dC)$.
We define two mappings $\Pi_p\colon M_q(\dC)\to M_{pq}(\dC)$ and $\phi\colon M_q(\dC)\to M_p(\dC)$ by
\begin{align}\label{E1148.1}
    \Pi_p(A)&:=A\oplus\dotsb\oplus A\text{ ($p$~times)},&
    \phi(A)&:=V^* \Pi_p(A)V,
    \quad A\in M_q(\dC).
\end{align} 
Then $\phi$ is a positive map.
\end{exm}

An important class of positive maps are the \emph{completely positive maps}, see e.\,g.\ \cite[Chapter~3]{bhatia}.
Suppose $\phi\colon M_q(\dC)\to M_p(\dC)$ is a completely positive map.
Then, by a theorem of Choi and Kraus \cite[Theorem~3.1.1]{bhatia}, $\phi$ is of the form \eqref{choi} with $k\leq pq$.
Further, by the Stinespring dilation theorem for matrix maps \cite[Theorem~3.1.2]{bhatia}, $\phi$ can be written as in \eqref{E1148.1}.

\begin{exm}\label{R1606}
Let $U$ be a linear subspace of $\dC^q$ and let $P_U$ denote the orthogonal projection on $U$.
Then the linear map $\pi_U \colon M_q(\dC)\to M_q(\dC)$ defined by $ \pi_U (A):=P_U A P_U$ is positive.
Of course, this is a special case of Example~\ref{exvj}.
Note that for $A\in\Hqgg $ we have $\tr\pi_U (A)=0$ if and only if $U\subseteq\ker A$.
\end{exm}

Next we transport functionals via a positive map.
For a linear map $L\colon E\to \Hq$ we define $L_{\otimes}\colon\cH_q(E)\to \cH_q \otimes \cH_q$ by $L_{\otimes}(F):=\sum_{j,k=1}^q H_{jk} \otimes  L(\langle F,H_{jk}\rangle)$ (cf.\ \cite[Definition~3.1]{ms}).

\begin{prop}\label{R1453}
Suppose that $\phi\colon M_q(\dC)\to M_p(\dC)$ is a positive linear map.
Let $L\colon E\to \Hq$ be a linear map and set $L':=\phi\circ L$.
Then we have
\[
    \Lambda_{L'}
    =\Lambda_L\circ\phi^\dagger
    \text{ and if $p=q$, then }
    (L')_\otimes
    =(\id\otimes\phi)\circ L_\otimes.
\]
\end{prop}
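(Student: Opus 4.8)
The plan is to treat the two identities separately. The second one, $(L')_\otimes = (\id\otimes\phi)\circ L_\otimes$, is immediate from the definition of $L_\otimes$, so I would dispose of it first; the first identity, $\Lambda_{L'} = \Lambda_L\circ\phi^\dagger$, is where the (minor) work lies, and it reduces to a short computation with the trace pairing \eqref{traceaux} and the defining relation of the adjoint.

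For the first identity I would begin by noting that both $\Lambda_{L'}$ and $\Lambda_L\circ\phi^\dagger$ are complex-linear functionals on $M_p(E)$. Here $\phi^\dagger$ is extended entrywise to a map $M_p(E)\to M_q(E)$, so that $\phi^\dagger(fB)=f\,\phi^\dagger(B)$ for $f\in E$ and $B\in M_p(\dC)$; since $\phi$ is positive, $\phi^\dagger$ is positive as well, hence maps $\mathcal{H}_p(E)$ into $\mathcal{H}_q(E)$, and $L'=\phi\circ L$ takes values in $\mathcal{H}_p$, so both functionals are genuinely defined. Because the elements $fB$ with $f\in E$ and $B\in M_p(\dC)$ span $M_p(E)$ over $\dC$, it suffices to verify the equality on such generators. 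Applying \eqref{traceaux} (with $q$ replaced by $p$ and $L$ replaced by $L'$) gives $\Lambda_{L'}(fB)=\langle B,\phi(L(f))\rangle$. Now, using conjugate symmetry of $\langle\cdot,\cdot\rangle$ together with the defining relation $\langle\phi(A),B\rangle=\langle A,\phi^\dagger(B)\rangle$ applied with $A=L(f)$, I would rewrite
\[
    \langle B,\phi(L(f))\rangle
    =\overline{\langle\phi(L(f)),B\rangle}
    =\overline{\langle L(f),\phi^\dagger(B)\rangle}
    =\langle\phi^\dagger(B),L(f)\rangle,
\]
and then \eqref{traceaux} again yields $\langle\phi^\dagger(B),L(f)\rangle=\Lambda_L(f\,\phi^\dagger(B))=(\Lambda_L\circ\phi^\dagger)(fB)$. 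This chain establishes the identity on the spanning set, hence on all of $M_p(E)$.

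For the second identity (the case $p=q$) there is essentially nothing beyond unwinding definitions: for $F\in\cH_q(E)$ I would apply $\id\otimes\phi$ termwise to $L_\otimes(F)=\sum_{j,k=1}^q H_{jk}\otimes L(\langle F,H_{jk}\rangle)$, obtaining $\sum_{j,k=1}^q H_{jk}\otimes\phi(L(\langle F,H_{jk}\rangle))=\sum_{j,k=1}^q H_{jk}\otimes L'(\langle F,H_{jk}\rangle)$, which is precisely $(L')_\otimes(F)$. The main (and only mild) obstacle is the bookkeeping in the first identity: one must keep track of which variable the pairing $\langle\cdot,\cdot\rangle$ is linear in versus antilinear in, apply conjugate symmetry in tandem with the adjoint relation correctly, and justify the reduction to the generators $fB$. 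Positivity of $\phi$ is invoked only to make $\Lambda_{L'}$ and $\phi^\dagger$ well-defined on the Hermitian matrices; the equality itself is a formal consequence of the adjoint relation and \eqref{traceaux}.
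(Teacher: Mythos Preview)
Your proof is correct. The second identity is handled identically in the paper, but for the first identity you take a different route: you test on the spanning set $\{fB:f\in E,\,B\in M_p(\dC)\}$ and use the trace formula \eqref{traceaux} together with the adjoint relation, whereas the paper expands $\phi(H_{jk})$ and $\phi^\dagger(H_{\ell m})$ in the orthonormal basis $\{H_{\ell m}\}$ and carries out a direct index computation starting from the definition \eqref{defcl}. Your argument is shorter and more conceptual, avoiding the basis expansion entirely; the paper's version is more explicit and stays with the basis-dependent definition of $\Lambda_L$. One small wording issue: ``extended entrywise'' is not quite right, since $\phi^\dagger$ does not act on the individual $E$-valued entries but on the matrix coefficients---what you mean (and correctly use) is the $(E+\im E)$-linear extension via $\phi^\dagger(fB)=f\,\phi^\dagger(B)$.
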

\begin{proof}
We develop with respect to the corresponding orthonormal bases:
\begin{align*}
    \phi(H_{jk})&=\sum_{\ell,m=1}^p \langle \phi(H_{jk}),H_{\ell m}\rangle H_{\ell m},\\
    \phi^\dagger (H_{\ell m})&=\sum_{j,k=1}^q\langle \phi^\dagger (H_{\ell m}),H_{jk}\rangle H_{jk}.
\end{align*}
Using these equations and \eqref{defcl} we derive for $F\in\cH_p(E)$, 
\[\begin{split}
    \Lambda_L (\phi^\dagger (F))
    &=\sum_{j,k=1}^q\langle L(\langle \phi^\dagger (F), H_{jk}\rangle),H_{jk}\rangle
    =\sum_{j,k=1}^q\langle L(\langle F,\phi(H_{jk}\rangle), H_{jk}\rangle\\
    &=\sum_{j,k=1}^q\sum_{\ell,m=1}^p\langle \phi(H_{jk}) , H_{\ell m}\rangle ~ \langle L(\langle F,H_{\ell m}\rangle), H_{jk}\rangle\\
    &=\sum_{j,k=1}^q\sum_{\ell,m=1}^p\langle H_{jk},\phi^\dagger (H_{\ell m})\rangle~ \langle L(\langle F,H_{\ell m}\rangle), H_{jk}\rangle\\
    &=\sum_{j,k=1}^q \sum_{\ell,m=1}^p\langle \phi^\dagger (H_{\ell m}),H_{jk}\rangle~ \langle L(\langle F,H_{\ell m}\rangle ), H_{jk}\rangle\\
    &=\sum_{\ell,m=1}^p \langle L(\langle F,H_{\ell m}\rangle),\phi^\dagger (H_{\ell m})\rangle
    =\sum_{\ell,m=1}^p\langle \phi(L(\langle F,H_{\ell m}\rangle)) ,H_{\ell m}\rangle\\
    &=\sum_{\ell,m=1}^p\langle L'(\langle F,H_{\ell m}\rangle),H_{\ell m}\rangle
    =\Lambda_{L'}(F)
\end{split}\]
and if $p=q$,
\begin{multline*}
    (\id \otimes\phi)(L_\otimes(F))
    =(\id \otimes\phi)\left(\sum_{j,k=1}^q H_{jk}\otimes L(\langle F,H_{jk}\rangle)\right)\\
    =\sum_{j,k=1}^q H_{jk}\otimes\phi(L(\langle F, H_{jk}\rangle ))
    =\sum_{j,k=1}^q H_{jk}\otimes L'(\langle F, H_{jk}\rangle)
    =(L')_\otimes(F).\qedhere
\end{multline*}
\end{proof}

\begin{prop}\label{R1511}
Let $\phi\colon M_q(\dC)\to M_p(\dC)$ be a positive linear map.
\begin{enumerate}[(a)]
    \item\label{R1511.a} If $\mu\in\cM_q(\cX ,\mathfrak{X})$, then $\phi\circ\mu\in\cM_p(\mathcal{X},\mathfrak{X})$. 
    \item\label{R1511.b} If $L$ is matrix moment functional on $E$ and $\mu\in\cM_q(\cX ,\mathfrak{X})$ is a representing measure of $L$, then $\phi\circ L$ is also a matrix moment functional on $E$ and $\phi\circ\mu$ is a representing measure of $\phi\circ L$.
\end{enumerate}
\end{prop}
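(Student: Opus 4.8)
The plan is to handle the two parts in order, letting part~\ref{R1511.b} rest on the elementary part~\ref{R1511.a} together with the observation that a positive linear map, being linear on the finite-dimensional space $M_q(\dC)$, is continuous and hence commutes with matrix-valued integration.

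For part~\ref{R1511.a} I would simply define $(\phi\circ\mu)(X):=\phi(\mu(X))$ for $X\in\fX$ and verify the two defining properties of an element of $\cM_p(\cX,\fX)$. Positivity is immediate: $\mu(X)\in\Hqgg$ and $\phi$ is positive, so $\phi(\mu(X))\in\cH_{p,\succeq}$. For $\sigma$-additivity, if $X=\bigcup_n X_n$ is a disjoint union then $\mu(X)=\sum_n\mu(X_n)$ converges in $\Hq$; since $\phi$ is linear on the finite-dimensional space $M_q(\dC)$ it is continuous, whence $\phi(\mu(X))=\sum_n\phi(\mu(X_n))$. Thus $\phi\circ\mu\in\cM_p(\cX,\fX)$.

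For part~\ref{R1511.b} the core is the identity
\begin{equation*}
    (\phi\circ L)(f)
    =\int_\cX f\,\dif(\phi\circ\mu)
    \quad\text{for all }f\in E,
\end{equation*}
which simultaneously exhibits $\phi\circ\mu$ as a representing measure of $\phi\circ L$ and, as a byproduct, shows $\phi\circ L$ to be a matrix moment functional. Writing $\tau:=\tr\mu$ and $\dif\mu=\Phi\,\dif\tau$ with $\Phi$ the Radon--Nikodym matrix, the left-hand side is $\phi(L(f))=\phi\bigl(\int_\cX f\Phi\,\dif\tau\bigr)$. Since $\phi$ is linear and continuous and the integral is taken entrywise, $\phi$ may be pulled inside the integral, giving $\int_\cX f\,\phi(\Phi)\,\dif\tau$, where $\phi(\Phi)$ denotes the function $x\mapsto\phi(\Phi(x))$. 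On the other hand, part~\ref{R1511.a} gives $(\phi\circ\mu)(X)=\phi\bigl(\int_X\Phi\,\dif\tau\bigr)=\int_X\phi(\Phi)\,\dif\tau$, so $\phi(\Phi)$ is a density of $\phi\circ\mu$ with respect to $\tau$. It then remains to identify $\int_\cX f\,\phi(\Phi)\,\dif\tau$ with $\int_\cX f\,\dif(\phi\circ\mu)$, the latter being computed, as in the preliminaries, through the trace measure $\sigma:=\tr(\phi\circ\mu)$ and its Radon--Nikodym matrix $\Psi$.

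The step I expect to be the main obstacle is exactly this last identification, namely that the matrix integral is insensitive to the choice of dominating scalar measure and density. Concretely, $\dif\sigma=\tr(\phi(\Phi))\,\dif\tau=:\rho\,\dif\tau$ with $\rho\geq 0$; on $\{\rho>0\}$ one has $\phi(\Phi)=\rho\Psi$ $\tau$-almost everywhere, while on $\{\rho=0\}$ the matrix $\phi(\Phi)$ vanishes, being positive semidefinite of zero trace, and $\sigma(\{\rho=0\})=0$. Substituting yields $\int_\cX f\,\dif(\phi\circ\mu)=\int_\cX f\Psi\,\dif\sigma=\int_\cX f\,\phi(\Phi)\,\dif\tau$, which closes the chain and proves the displayed identity. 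Alternatively, one could bypass the direct computation by invoking Proposition~\ref{R1453}: since $\Lambda_{\phi\circ L}=\Lambda_L\circ\phi^\dagger$ and $\mu$ represents $\Lambda_L$, the adjoint relation $\langle\phi^\dagger(F),\Phi\rangle=\langle F,\phi(\Phi)\rangle$ reduces the claim to the same density identification for the functional $\Lambda^{\phi\circ\mu}$ on $\cH_p(E)$, after which \cite[Proposition~4.5]{ms} transfers the conclusion back to $\phi\circ L$.
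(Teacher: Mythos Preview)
Your proof of part~\ref{R1511.a} is exactly what the paper does: positivity is immediate from $\phi$ being positive, and $\sigma$-additivity follows from linearity and continuity of $\phi$ on the finite-dimensional space $M_q(\dC)$.

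For part~\ref{R1511.b} your argument is correct but takes a different and longer route than the paper. You work through the Radon--Nikodym density $\Phi$ of $\mu$ with respect to $\tau=\tr\mu$, push $\phi$ inside the integral to obtain $\int_\cX f\,\phi(\Phi)\,\dif\tau$, and then carefully reconcile this with the official definition of $\int_\cX f\,\dif(\phi\circ\mu)$ via the trace measure $\sigma=\tr(\phi\circ\mu)$ and its density $\Psi$. The paper bypasses densities entirely: for a measurable simple function $g$ the identity $\phi\bigl(\int_\cX g\,\dif\mu\bigr)=\int_\cX g\,\dif(\phi\circ\mu)$ holds by plain linearity, and then one approximates $f\in E$ by simple functions and invokes continuity of $\phi$. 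This three-line argument sidesteps precisely the change-of-dominating-measure bookkeeping you flag as the ``main obstacle''. Your route has the side benefit of exhibiting $\phi(\Phi)$ explicitly as a density of $\phi\circ\mu$ with respect to $\tau$, but the paper's simple-function approximation is the more economical path, and your alternative via Proposition~\ref{R1453} is an unnecessary detour.
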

\begin{proof}
\ref{R1511.a} For any $X\in\mathfrak{X}$, we have $\mu(X)\in\Hqgg $ and hence $\phi(\mu(X))\in\mathcal{H}_{p,\succeq}$ by the positivity of the map $\phi$.
Let $(X_n;n\in \dN)$ be a sequence of pairwise disjoint sets from $\mathfrak{X}$.
Then 
\[
    \mu\left(\bigcup_{n=1}^\infty X_n\right)
    =\sum_{n=1}^\infty \mu(X_n)
    =\lim_{m\to\infty}\sum_{n=1}^m\mu(X_n).
\]
By the continuity and linearity of $\phi$ it follows that 
\[
    \phi\left(\mu\left(\bigcup_{n=1}^\infty X_n\right)\right)
    =\lim_{m\to\infty}\phi\left(\sum_{n=1}^m\mu(X_n)\right)
    =\lim_{m\to\infty}\sum_{n=1}^m\phi(\mu(X_n))
    =\sum_{n=1}^\infty\phi(\mu(X_n)).
\]
Therefore, $\phi\circ\mu\in\cM_p(\mathcal{X},\mathfrak{X})$.

\ref{R1511.b} By~\ref{R1511.a}, $\phi\circ\mu\in\cM_p(\mathcal{X},\mathfrak{X})$.
For measurable elementary functions $g$ on $E$ we obtain $\phi(\int_\cX  g\, \dif\mu)=\int_\cX  g\, \dif(\phi\circ\mu)$ by linearity.
Now let $f\in E$.
We approximate $f$ by elementary functions.
Then, by the continuity of $\phi$, 
\[
    (\phi\circ L)(f)
    =\phi(L(f))
    =\phi\left(\int_\cX  f\, \dif\mu\right)
    =\int_\cX  f\, \dif(\phi\circ\mu).\qedhere
\]
\end{proof}

\section{The matricial apolar scalar product and the matricial moment problem}\label{apolar}
For homogeneous polynomials in several variables there is an interplay between the apolar scalar product and the truncated moment problem (see, for instance, \cite[Chapter~19]{sch17} and \cite{vegter}).
In this final section we try to generalize parts of the beautiful theory to the matricial case.

Let $\cH_{q;d,m}$ denote the real vector space of homogeneous polynomials in $d$ variables of degree $m$ with coefficients from $\Hq$ and let
\[
    \mathsf{N}_{d,m}
    :=\{\alpha\in\dN_0^d\colon\alpha_1+\dotsb+\alpha_d=m\},\quad\text{where }m\in\dN.
\]
Note that the vector space $\cH_{q;d,m}$ has the dimension
\[
    \dim\cH_{q;d,m}
    =\lvert\mathsf{N}_{d,m}\rvert\dim\Hq
    =\binom{m+d-1}{d-1}q^2.
\]

First we define the apolar scalar product $[\cdot,\cdot]$ on $\cH_{q;d,m}$.
We abbreviate
\[
    \binom{m}{\alpha}
    =\frac{m!}{\alpha_1!\dotsm\alpha_d!}
    \quad\text{for }\alpha=(\alpha_1,\dotsc,\alpha_d)\in\mathsf{N}_{d,m}.
\]
For homogeneous polynomials
\begin{equation}\label{G19.2}
    P(x)
    =\sum_{\alpha\in\mathsf{N}_{d,m}}\binom{m}{\alpha} A_\alpha x^\alpha
    \quad\text{and}
    \quad Q(x)
    =\sum_{\alpha\in\mathsf{N}_{d,m}}\binom{m}{\beta} B_\beta x^\beta
\end{equation}
of $\cH_{q;d,m}$ we define
\begin{equation}\label{G19.3}
    [P,Q]
    :=\sum_{\alpha\in\mathsf{N}_{d,m}}\binom{m}{\alpha}\langle A_\alpha,B_\alpha\rangle.
\end{equation}
Using the fact that $\langle\cdot, \cdot\rangle$ is a scalar product on $\Hq$ a straightforward verification shows that $[\cdot,\cdot]$ is a (real-valued) scalar product on the real vector space $\cH_{q;d,m}$.
Thus, $(\cH_{q;d,m},[\cdot,\cdot])$ is a finite-dimensional real Hilbert space.

\begin{dfn}\label{D0825}
$[\cdot,\cdot]$ is called the \emph{apolar scalar product} on $\cH_{q;d,m}$.
\end{dfn}

Note that the coefficient $A_\alpha$ of $P\in\cH_{q;d,m}$ in \eqref{G19.2} can be recovered from the apolar scalar product by
\[
    [P,x^\alpha H]
    =\langle A_\alpha,H\rangle,
    \quad H\in\Hq,\,\alpha\in\mathsf{N}_{d,m},
\]
and
\[
    [P, vv^*x^\alpha]
    =v^*A_\alpha v,
    \quad v\in\dC^q,\,\alpha\in\mathsf{N}_{d,m}.
\]
Denote by $(a,b):=a_1b_1+\dotsb+a_db_d$ the standard scalar product of $\dR^d$.

Let $y=(y_1,\dotsc,y_d)^\trn\in\dR^d$.
We denote by $\pi_{y,m}$ the element of $\cH_{1;d,m}$ defined by
\begin{equation}\label{G19.4}
    \pi_{y,m}(x)
    :=(y,x)^m
    =\left(\sum_{j=1}^dy_jx_j\right)^m
    =\sum_{\alpha\in\mathsf{N}_{d,m}}\binom{m}{\alpha}y^\alpha x^\alpha,
\end{equation}
where the last equality holds by the multinomial theorem. 

Now suppose that $F=\sum_{r=1}^s C_r\pi_{\eta_r,m}\in\cH_{q;d,m}$, where $\eta_r\in\dR^d$ and $C_r\in\Hq$, and let $P\in\cH_{q;d,m}$ be as in \eqref{G19.2}.
From \eqref{G19.4} it follows that
\[
    F(x)
    =\sum_{r=1}^sC_r\left(\sum_{\alpha\in\mathsf{N}_{d,m}}\binom{m}{\alpha}\eta_r^\alpha x^\alpha\right)
    =\sum_{\alpha\in\mathsf{N}_{d,m}}\binom{m}{\alpha}\left(\sum_{r=1}^sC_r\eta_r^\alpha\right)x^\alpha
\]
and hence
\[\begin{split}
    [P,F]
    &=\sum_{\alpha\in\mathsf{N}_{d,m}}\binom{m}{\alpha}\left\langle A_\alpha,\sum_{r=1}^sC_r\eta_r^\alpha\right\rangle
    =\sum_{r=1}^s\left\langle\sum_{\alpha\in\mathsf{N}_{d,m}}\binom{m}{\alpha}A_\alpha\eta_r^\alpha, C_r\right\rangle\\
    &=\sum_{r=1}^s\left\langle P(\eta_r), C_r\right\rangle
    =\sum_{r=1}^s \tr (P(\eta_r)C_r).
\end{split}\]
In particular,
\begin{equation}\label{G19.6}
    [P,B\pi_{y,m}]
    =\langle P(y),B\rangle,
    \quad B\in\Hq,\,y\in\dR^d,\,P\in\cH_{q;d,m}
\end{equation}
and
\[
    [P,vv^*\pi_{y,m}]
    =v^*P(y)v,
    \quad v\in\dC^q,\,y\in\dR^d,\,P\in\cH_{q;d,m}.
\]
In particular, setting $P=A\pi_{a,m}$ and $y=b$ in \eqref{G19.6}, we obtain
\[
    [A\pi_{a,m},B\pi_{b,m}]
    =\langle A(a,b)^m,B\rangle
    =(a,b)^m\langle A,B\rangle,
    \quad a,b\in\dR^d,\,A,B\in\Hq.
\]
Recall $(a,b)$ means the scalar product in $\dR^d$.
Therefore, if $a\perp b$ in $\dR^d$ or $A\perp B$ in $\Hq$, then $A\pi_{a,m}\perp B\pi_{b,m}$ in the Hilbert space $(\cH_{q;d,m},[\cdot,\cdot])$.

Now we develop the interplay between the apolar scalar product and the action of differential operators.

We denote by $\cH_{q;d}[\underline{x}]$ the real vector space of polynomials in $d$ variables with coefficients from $\Hq$.
Let $i\in\{1,\dotsc,d\}$ and $\alpha=(\alpha_1,\dotsc,\alpha_d)\in\dN_0^d$.
Then we abbreviate $\partial_i=\partial/\partial x_i$ and $\partial^\alpha=\partial_1^{\alpha_1}\dotsm\partial_d^{\alpha_d}$.

Given $R(x)=\sum_\gamma x^\gamma R_\gamma\in\cH_{q;d}[\underline{x}]$, we consider $R(\partial)$ as a differential operator acting on $\Hq$\nobreakdash-valued polynomials by
\[
    R(\partial)S
    :=\sum_\gamma \langle R_\gamma,\partial^\gamma S\rangle,
    \quad S\in\cH_{q;d}[\underline{x}].
\]

\begin{lem}\label{L0830}
Let $\eta_1,\dotsc,\eta_s\in\dR^d$ and $C_1,\dotsc,C_s\in\Hq$.
Define
\[
    F
    :=\sum_{r=1}^s C_r\pi_{\eta_r,m}
    \in\cH_{q;d,m}.
\]
If $P\in\cH_{q;d,n}$ and $n\leq m$, then
\begin{equation}\label{G19.14}
    P(\partial)F
    =m(m-1)\dotsm(m+1-n)\sum_{r=1}^s\left\langle P(\eta_r),C_r\right\rangle\pi_{\eta_r,m-n}.
\end{equation}
Let $y\in\dR^d$, $B\in\Hq$, and $P\in\cH_{q;d,m}$.
Then
\begin{equation}\label{G19.15}
    P(\partial)( B\pi_{y,m})
    =m!\left\langle P(y), B\right\rangle.
\end{equation}
In particular, if $\langle P(y), B\rangle=0$, then $P(\partial)( B\pi_{y,m})=0$.
\end{lem}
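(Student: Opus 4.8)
The plan is to reduce both formulas to a single elementary computation, namely the action of $\partial^\alpha$ on a power of a linear form. The crucial identity is that for $\eta\in\dR^d$ and a multi-index $\alpha$ with $\lvert\alpha\rvert=n\leq m$ one has
\[
    \partial^\alpha\pi_{\eta,m}
    =\frac{m!}{(m-n)!}\,\eta^\alpha\,\pi_{\eta,m-n}.
\]
First I would prove this by induction on $n$. The case $n=0$ is trivial, and the inductive step rests on the chain rule $\partial_i(\eta,x)^\ell=\ell\,\eta_i\,(\eta,x)^{\ell-1}$, which follows from $\pi_{\eta,\ell}(x)=(\eta,x)^\ell$ and $\partial_i(\eta,x)=\eta_i$. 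Writing a multi-index of length $n+1$ as $\alpha+e_i$ and applying $\partial_i$ to the case $n$, the exponent drops by one, a factor $\eta_i$ appears, and the numerical factor $\frac{m!}{(m-n)!}$ is multiplied by $(m-n)$, giving exactly $\frac{m!}{(m-(n+1))!}$. Since $\frac{m!}{(m-n)!}=m(m-1)\dotsm(m+1-n)$, this is precisely the falling factorial occurring in \eqref{G19.14}, and it depends only on $n=\lvert\alpha\rvert$, not on the particular $\alpha$.

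Next I would establish \eqref{G19.14}. By linearity of $P(\partial)$ it suffices to treat a single summand $C\pi_{\eta,m}$. Expanding $P(x)=\sum_{\alpha\in\mathsf{N}_{d,n}}\binom{n}{\alpha}A_\alpha x^\alpha$, the definition of the differential operator reads $P(\partial)S=\sum_{\alpha\in\mathsf{N}_{d,n}}\binom{n}{\alpha}\langle A_\alpha,\partial^\alpha S\rangle$. Substituting $S=C\pi_{\eta,m}$, using $\partial^\alpha(C\pi_{\eta,m})=C\,\partial^\alpha\pi_{\eta,m}$ together with the key identity, each term becomes $\binom{n}{\alpha}\frac{m!}{(m-n)!}\eta^\alpha\pi_{\eta,m-n}(x)\langle A_\alpha,C\rangle$; here the real scalar $\frac{m!}{(m-n)!}\eta^\alpha\pi_{\eta,m-n}(x)$ passes out of the pairing. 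Summing over $\alpha$ and recognizing $\sum_{\alpha\in\mathsf{N}_{d,n}}\binom{n}{\alpha}\eta^\alpha\langle A_\alpha,C\rangle=\langle P(\eta),C\rangle$ gives $P(\partial)(C\pi_{\eta,m})=m(m-1)\dotsm(m+1-n)\langle P(\eta),C\rangle\pi_{\eta,m-n}$; summing over $r$ then yields \eqref{G19.14}.

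Formula \eqref{G19.15} is then the special case $n=m$, $s=1$, $C_1=B$, $\eta_1=y$: the falling factorial collapses to $m!$ and $\pi_{y,0}=(y,x)^0=1$, so $P(\partial)(B\pi_{y,m})=m!\langle P(y),B\rangle$. The closing assertion is immediate, since $\langle P(y),B\rangle=0$ forces the right-hand side to vanish.

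The main obstacle is purely the bookkeeping in the key identity: one must track the scalar factors carefully through the iterated multi-index differentiation and confirm that the falling factorial emerges correctly and independently of $\alpha$. A recurring minor point, worth stating once, is that every scalar entering the computation (the monomial $\eta^\alpha$, the multinomial coefficients, the falling factorial, and the value $\pi_{\eta,m-n}(x)$) is real, so it moves freely through the sesquilinear pairing $\langle\cdot,\cdot\rangle$ without any conjugation correction.
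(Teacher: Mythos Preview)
Your proposal is correct and follows essentially the same route as the paper: both hinge on the key identity $\partial^\alpha\pi_{\eta,m}=m(m-1)\dotsm(m+1-n)\,\eta^\alpha\,\pi_{\eta,m-n}$ for $\lvert\alpha\rvert=n$, then expand $P$ and sum. The only difference is cosmetic: the paper imports this identity from the scalar case in \cite[Lemma~19.7]{sch17}, whereas you sketch the easy induction directly.
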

\begin{proof}
For $\alpha\in\mathsf{N}_{d,n}$ we compute (as in the proof of \cite[Lemma~19.7]{sch17}),
\[
    \partial^\alpha\pi_{y,m}
    =m(m-1)\dotsm(m+1-n)y^\alpha\pi_{y,m-n}.
\]
Let $P$ be as in \eqref{G19.2}.
By linearity the preceding implies
\[\begin{split}
    P(\partial)F
    &=\sum_{\alpha\in\mathsf{N}_{d,n}}\left\langle \binom{n}{\alpha}A_\alpha,\partial^\alpha F\right\rangle
    =\sum_{\alpha\in\mathsf{N}_{d,n}}\left\langle \binom{n}{\alpha}A_\alpha,\partial^\alpha\left(\sum_{r=1}^s C_r\pi_{\eta_r,m}\right)\right\rangle\\
    &=\sum_{\alpha\in\mathsf{N}_{d,n}}\left\langle \binom{n}{\alpha}A_\alpha,\sum_{r=1}^sC_r\partial^\alpha\pi_{\eta_r,m}\right\rangle\\
    &=\sum_{\alpha\in\mathsf{N}_{d,n}}\left\langle \binom{n}{\alpha}A_\alpha,\sum_{r=1}^sm(m-1)\dotsm(m+1-n)\eta_r^\alpha C_r\pi_{\eta_r,m-n}\right\rangle\\
    &=\sum_{\alpha\in\mathsf{N}_{d,n}}\sum_{r=1}^sm(m-1)\dotsm(m+1-n)\eta_r^\alpha\pi_{\eta_r,m-n}\left\langle \binom{n}{\alpha}A_\alpha,C_r\right\rangle\\
    &=m(m-1)\dotsm(m+1-n)\sum_{r=1}^s\left\langle\sum_{\alpha\in\mathsf{N}_{d,n}} \binom{n}{\alpha}A_\alpha\eta_r^\alpha,C_r\right\rangle\pi_{\eta_r,m-n}\\
    &=m(m-1)\dotsm(m+1-n)\sum_{r=1}^s\left\langle P(\eta_r), C_r\right\rangle\pi_{\eta_r,m-n}.
\end{split}\]
Formula \eqref{G19.15} follows from \eqref{G19.14} by setting $F= B\pi_{y,m}$.
\end{proof}

\begin{lem}\label{pqdiff}
If $P,Q\in\cH_{q;d,m}$, then $[P,Q]=\frac{1}{m!}P(\partial)Q=\frac{1}{m!}Q(\partial)P$.
\end{lem}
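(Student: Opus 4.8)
The plan is to verify the identity $[P,Q]=\tfrac{1}{m!}P(\partial)Q$ first on a convenient spanning set and then extend by linearity, using the two formulas \eqref{G19.6} and \eqref{G19.15} already established. First I would take $Q$ of the special form $Q=B\pi_{y,m}$ with $B\in\Hq$ and $y\in\dR^d$. For such $Q$, formula \eqref{G19.6} gives $[P,Q]=\langle P(y),B\rangle$, while \eqref{G19.15} of Lemma~\ref{L0830} gives $P(\partial)Q=m!\langle P(y),B\rangle$; comparing the two yields $[P,Q]=\tfrac{1}{m!}P(\partial)Q$ for every $Q=B\pi_{y,m}$.

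The next step is to see that the polynomials $B\pi_{y,m}$, with $B\in\Hq$ and $y\in\dR^d$, span the whole space $\cH_{q;d,m}$. I would argue by nondegeneracy of the apolar scalar product: if $P\in\cH_{q;d,m}$ satisfies $[P,B\pi_{y,m}]=0$ for all such $B$ and $y$, then $\langle P(y),B\rangle=0$ for all $B\in\Hq$ by \eqref{G19.6}; since $\langle\cdot,\cdot\rangle$ is a nondegenerate scalar product on $\Hq$ and $P(y)\in\Hq$ (a real combination of Hermitian coefficients), this forces $P(y)=O$ for every $y\in\dR^d$, hence $P=0$ as a polynomial. Therefore the $[\cdot,\cdot]$-orthogonal complement of the span of $\{B\pi_{y,m}\}$ is trivial, so these elements span $\cH_{q;d,m}$.

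Now both sides of $[P,Q]=\tfrac{1}{m!}P(\partial)Q$ are, for fixed $P$, real-linear in $Q$ (the apolar product is bilinear, and $Q\mapsto P(\partial)Q$ is linear by the definition of the differential operator). Since they agree on a spanning set, they agree for all $Q\in\cH_{q;d,m}$, establishing the first equality. Finally, the second equality $\tfrac{1}{m!}P(\partial)Q=\tfrac{1}{m!}Q(\partial)P$ follows from the symmetry of the apolar scalar product: by \eqref{G19.3} we have $[P,Q]=[Q,P]$, because $\langle A_\alpha,B_\alpha\rangle=\langle B_\alpha,A_\alpha\rangle$ for Hermitian matrices, and applying the first equality to both $[P,Q]$ and $[Q,P]$ gives the claim.

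I expect the only point requiring genuine care is the spanning claim; everything else is a direct combination of \eqref{G19.6}, \eqref{G19.15}, and symmetry. As an alternative that bypasses the spanning argument entirely, I could compute directly: writing $P,Q$ in the normalized form \eqref{G19.2} and using $\partial^\alpha x^\beta=\alpha!\,\delta_{\alpha\beta}$ for multi-indices $\alpha,\beta\in\mathsf{N}_{d,m}$ gives $\partial^\alpha Q=m!\,B_\alpha$, and substituting into $P(\partial)Q=\sum_{\alpha}\langle\binom{m}{\alpha}A_\alpha,\partial^\alpha Q\rangle$ recovers $m!\sum_{\alpha}\binom{m}{\alpha}\langle A_\alpha,B_\alpha\rangle=m!\,[P,Q]$ at once, with the symmetry handled as above.
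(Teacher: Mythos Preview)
Your proof is correct. The primary route you take differs from the paper's: the paper argues by a direct monomial computation, verifying
\[
\frac{1}{m!}\left\langle\binom{m}{\alpha}A_\alpha,\binom{m}{\beta}B_\beta\,\partial^\alpha x^\beta\right\rangle
=\left[\binom{m}{\alpha}A_\alpha x^\alpha,\binom{m}{\beta}B_\beta x^\beta\right]
\]
by splitting into the cases $\alpha\neq\beta$ (both sides vanish) and $\alpha=\beta$ (using $\partial^\alpha x^\alpha=\alpha!$), and then summing by bilinearity. Your alternative at the end is precisely this argument. Your main approach instead leverages the already-proved identities \eqref{G19.6} and \eqref{G19.15}, reducing the lemma to the spanning claim for $\{B\pi_{y,m}:B\in\Hq,\ y\in\dR^d\}$, which you establish cleanly via nondegeneracy of $[\cdot,\cdot]$. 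This is a slightly higher-level argument that reuses work from Lemma~\ref{L0830} rather than redoing a derivative computation; it also makes the role of the polynomials $\pi_{y,m}$ as ``test elements'' more transparent. The paper's route is more self-contained and elementary, requiring no spanning statement. Both handle the symmetry $[P,Q]=[Q,P]$ the same way.
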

\begin{proof}
By the symmetry of the (real-valued) scalar product it is sufficient to prove the first formula.
We first consider the case $P=\binom{m}{\alpha}A_\alpha x^\alpha$ and $Q=\binom{m}{\beta}B_\beta x^\beta$, where $\alpha,\beta\in\mathsf{N}_{d,m}$ and $A_\alpha,B_\beta\in\Hq$, and verify
\begin{equation}\label{G1336}
    \frac{1}{m!}\left\langle\binom{m}{\alpha}A_\alpha,\binom{m}{\beta} B_\beta \partial^\alpha x^\beta\right\rangle
    =\left[\binom{m}{\alpha}A_\alpha x^\alpha,\binom{m}{\beta}B_\beta x^\beta\right].
\end{equation}

First suppose that $\alpha\neq\beta$.
Then $[\binom{m}{\alpha}A_\alpha x^\alpha,\binom{m}{\beta}B_\beta x^\beta]=0$ by \eqref{G19.3}.
Since $\alpha,\beta\in\mathsf{N}_{d,m}$ and $\alpha\neq\beta$, there is an index $j\in\{1,\dotsc,d\}$ such that $\alpha_j>\beta_j$.
Then $\partial_j^{\alpha_j}x_j^{\beta_j}=0$ and hence $\partial^\alpha x^\beta=0$.
Consequently,
\[
    \frac{1}{m!}\left\langle\binom{m}{\alpha}A_\alpha,\binom{m}{\beta}B_\beta \partial^\alpha x^\beta\right\rangle
    =0
    =\left[\binom{m}{\alpha}A_\alpha x^\alpha,\binom{m}{\beta}B_\beta x^\beta\right].
\]

Now assume that $\alpha=\beta$.
Then we have $\alpha_j=\beta_j$ and hence $\partial_j^{\alpha_j}x^{\beta_j}=\alpha_j!$ for all $j\in\{1,\dotsc,d\}$.
This yields $\partial^\alpha x^\beta=\alpha_1!\dotsm\alpha_d!$ and hence $\binom{m}{\beta}B_\beta\partial^\alpha x^\beta=\binom{m}{\alpha}\alpha_1!\dotsm\alpha_d!B_\alpha=m!B_\alpha$.
Consequently,
\[\begin{split}
    \frac{1}{m!}\left\langle\binom{m}{\alpha}A_\alpha,\binom{m}{\beta}B_\beta\partial^\alpha x^\beta\right\rangle
    &=\frac{1}{m!}\left\langle\binom{m}{\alpha}A_\alpha,m!B_\alpha\right\rangle\\
    &=\binom{m}{\alpha}\left\langle A_\alpha,B_\alpha\right\rangle
    =\left[\binom{m}{\alpha}A_\alpha x^\alpha,\binom{m}{\beta}B_\beta x^\beta\right].
\end{split}\]

Hence, \eqref{G1336} holds true for all $\alpha,\beta\in\mathsf{N}_{d,m}$ and all $A_\alpha,B_\beta\in\Hq$.
Consider homogeneous polynomials $P(x)=\sum_{\alpha\in\mathsf{N}_{d,m}}\binom{m}{\alpha} A_\alpha x^\alpha$ and $Q(x)=\sum_{\beta\in\mathsf{N}_{d,m}}\binom{m}{\beta} B_\beta x^\beta$ of $\cH_{q;d,m}$.
Using \eqref{G1336} we have then
\[\begin{split}
    \frac{1}{m!}P(\partial)Q
    &=\frac{1}{m!}\sum_{\alpha\in\mathsf{N}_{d,m}}\left\langle \binom{m}{\alpha}A_\alpha,\partial^\alpha Q\right\rangle\\
    &=\frac{1}{m!}\sum_{\alpha\in\mathsf{N}_{d,m}}\left\langle \binom{m}{\alpha}A_\alpha,\sum_{\beta\in\mathsf{N}_{d,m}}\binom{m}{\beta}B_\beta\partial^\alpha x^\beta\right\rangle\\
    &=\sum_{\alpha,\beta\in\mathsf{N}_{d,m}}\frac{1}{m!}\left\langle \binom{m}{\alpha}A_\alpha,\binom{m}{\beta}B_\beta\partial^\alpha x^\beta\right\rangle\\
    &=\sum_{\alpha,\beta\in\mathsf{N}_{d,m}}\left[\binom{m}{\alpha}A_\alpha x^\alpha,\binom{m}{\beta}B_\beta x^\beta\right]\\
    &=\left[\sum_{\alpha\in\mathsf{N}_{d,m}}\binom{m}{\alpha}A_\alpha x^\alpha,\sum_{\beta\in\mathsf{N}_{d,m}}\binom{m}{\beta}B_\beta x^\beta\right]
    =[P,Q].\qedhere
\end{split}\]
\end{proof}

Combining formula \eqref{G19.14} with Lemma~\ref{pqdiff} we obtain the following:

\begin{cor}\label{C19.9}
If $F:=\sum_{r=1}^s C_r\pi_{\eta_r,m}$ with $\eta_1,\dotsc,\eta_s\in\dR^d$ and $C_1,\dotsc,C_s\in\Hq$, then we have
\begin{equation}\label{G19.16}
    \frac{1}{m!}F(\partial)P
    =[P,F]
    =\sum_{r=1}^s\left\langle P(\eta_r), C_r\right\rangle
    \quad\text{for all }P\in\cH_{q;d,m}.
\end{equation}
\end{cor}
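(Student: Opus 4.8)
The plan is to combine the two preceding results exactly as the sentence introducing the corollary suggests, namely Lemma~\ref{pqdiff} and formula~\eqref{G19.14}. The statement is a bookkeeping corollary, so I expect no genuine obstacle; the whole task is to assemble the two equalities and to evaluate two degenerate quantities correctly.

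First I would observe that $F=\sum_{r=1}^s C_r\pi_{\eta_r,m}$ is a finite sum of homogeneous polynomials of degree $m$ with coefficients $C_r\in\cH_q$, and hence $F\in\cH_{q;d,m}$. Since also $P\in\cH_{q;d,m}$ by hypothesis, Lemma~\ref{pqdiff} applies to the pair $(P,F)$ and gives $[P,F]=\frac{1}{m!}F(\partial)P=\frac{1}{m!}P(\partial)F$. This immediately establishes the left-hand equality $\frac{1}{m!}F(\partial)P=[P,F]$ in \eqref{G19.16}, so it only remains to identify the common value.

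For the right-hand equality I would specialize formula~\eqref{G19.14} to the case $n=m$. Then the descending product $m(m-1)\dotsm(m+1-n)$ collapses to $m(m-1)\dotsm 1=m!$, and the residual power $\pi_{\eta_r,m-n}$ becomes $\pi_{\eta_r,0}\equiv 1$, since $(\eta_r,x)^0=1$. Thus \eqref{G19.14} reads $P(\partial)F=m!\sum_{r=1}^s\langle P(\eta_r),C_r\rangle$, and dividing by $m!$ while invoking Lemma~\ref{pqdiff} once more yields $[P,F]=\frac{1}{m!}P(\partial)F=\sum_{r=1}^s\langle P(\eta_r),C_r\rangle$, which is precisely the right-hand equality. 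Alternatively, the identity $[P,F]=\sum_{r=1}^s\langle P(\eta_r),C_r\rangle$ was already derived by direct expansion in the apolar scalar product in the paragraph immediately preceding the corollary, so one may simply cite that computation rather than re-running \eqref{G19.14}; either route closes the argument.

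The only points demanding (minor) care are verifying that $F$ is homogeneous of degree exactly $m$ so that Lemma~\ref{pqdiff} is genuinely applicable to $(P,F)$, and correctly evaluating the two boundary quantities at $n=m$, namely $m(m-1)\dotsm(m+1-m)=m!$ and $\pi_{\eta_r,0}=1$. Neither involves any real difficulty, so I do not anticipate a hard step here.
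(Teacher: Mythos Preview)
Your proposal is correct and follows exactly the route indicated by the paper: invoke Lemma~\ref{pqdiff} to get $\frac{1}{m!}F(\partial)P=[P,F]=\frac{1}{m!}P(\partial)F$, then specialize formula~\eqref{G19.14} to $n=m$ (so that the falling factorial becomes $m!$ and $\pi_{\eta_r,0}=1$) to obtain the right-hand equality. The paper's own proof is just the single phrase ``Combining formula~\eqref{G19.14} with Lemma~\ref{pqdiff}'', and you have unpacked precisely that.
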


Finally, we turn to the relations between the apolar scalar product and the truncated moment problem.

Let us begin by defining a cone in the vector space $\cH_{q;d,m}$:
\[
    \mathsf{Q}_{q;d,m}
    :=\left\{\sum_{r=1}^s C_r\pi_{\eta_r,m}\colon s\in\dN,\,\eta_1,\dotsc,\eta_s\in\dR^d,\,C_1,\dotsc,C_s\in\Hqgg \right\}.
\]
Clearly,
\[
    \mathsf{Q}_{q;d,m}
    =\cone\left(\{\pi_{y,m}vv^*\colon y\in\dR^d\text{ and }v\in\dC^q\}\right).
\]

\begin{thm}\label{T19.14}
Let $\eta_1,\dotsc,\eta_s\in\dR^d$, $C_1,\dotsc,C_s\in\Hqgg $, and $F:=\sum_{r=1}^s C_r\pi_{\eta_r,m}$.
Then the linear functional $\Gamma_F$ on $\cH_{q;d,m}$ defined by 
\[
    \Gamma_F(P)
    :=\frac{1}{m!}F(\partial)P,
    \quad P\in\cH_{q;d,m},
\]
is a moment functional on the vector space $\cH_{q;d,m}$ on $\cX=\dR^d$.
It satisfies
\begin{equation}\label{G19.28}
    \Gamma_F(P)
    =[P,F]
    =\sum_{r=1}^s\left\langle P(\eta_r), C_r\right\rangle
    \quad\text{for all }P\in\cH_{q;d,m}
\end{equation}
and $\nu:=\sum_{r=1}^s C_r\delta_{\eta_r}$ is a representing measure of $\Gamma_F$.

Each moment functional on $\cH_{q;d,m}$ is of the form $\Gamma_F$ with $F\in\mathsf{Q}_{q;d,m}$ uniquely determined.
\end{thm}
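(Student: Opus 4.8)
The plan is to assemble the theorem from Corollary~\ref{C19.9} and the matricial Richter--Tchakaloff theorem (Theorem~\ref{richterm}), with the uniqueness part coming solely from the non-degeneracy of the apolar scalar product, so that the cone constraint $F\in\mathsf{Q}_{q;d,m}$ imposes nothing extra.

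First I would dispatch the assertions about $\Gamma_F$ itself. Applying Corollary~\ref{C19.9} to $F=\sum_{r=1}^sC_r\pi_{\eta_r,m}$ gives at once
\[
    \Gamma_F(P)
    =\frac{1}{m!}F(\partial)P
    =[P,F]
    =\sum_{r=1}^s\langle P(\eta_r),C_r\rangle
    \quad\text{for all }P\in\cH_{q;d,m},
\]
which is exactly \eqref{G19.28}. Since each $C_r\in\Hqgg$ and each $\eta_r\in\dR^d$, the measure $\nu=\sum_{r=1}^sC_r\delta_{\eta_r}$ is a finitely atomic element of $\cM_q(\dR^d,\fX)$. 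Using the integration formula for finitely atomic measures recalled in Section~\ref{prelim}, together with the identity $\langle P(\eta_r),C_r\rangle=\tr(P(\eta_r)C_r^*)=\tr(P(\eta_r)C_r)$ (valid because $C_r$ is Hermitian), I obtain $\int_{\dR^d}\langle P,\dif\nu\rangle=\sum_{r=1}^s\tr(P(\eta_r)C_r)=\Gamma_F(P)$. Hence $\Gamma_F$ is a moment functional on $\cH_{q;d,m}$ with representing measure $\nu$.

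Next I would show that every moment functional arises this way. Let $\Lambda$ be a moment functional on $\cH_{q;d,m}$. By Theorem~\ref{richterm} it admits a finitely atomic representing measure $\nu=\sum_{j=1}^kM_j\delta_{x_j}$ with $x_j\in\dR^d$ and $M_j\in\Hqgg$, so that $\Lambda(P)=\sum_{j=1}^k\tr(P(x_j)M_j)=\sum_{j=1}^k\langle P(x_j),M_j\rangle$ for all $P$. Setting $F:=\sum_{j=1}^kM_j\pi_{x_j,m}$, which lies in the cone $\mathsf{Q}_{q;d,m}$ by its very definition, the already established formula \eqref{G19.28} yields $\Gamma_F(P)=\sum_{j=1}^k\langle P(x_j),M_j\rangle=\Lambda(P)$ for all $P\in\cH_{q;d,m}$. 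Thus $\Lambda=\Gamma_F$ with $F\in\mathsf{Q}_{q;d,m}$. For uniqueness I would invoke that $[\cdot,\cdot]$ is a scalar product: the real-linear map $\cH_{q;d,m}\to(\cH_{q;d,m})^\ast$, $F\mapsto\Gamma_F=[\cdot,F]$, is injective, since $\Gamma_F=\Gamma_{F'}$ forces $[P,F-F']=0$ for all $P$, and the choice $P=F-F'$ then gives $[F-F',F-F']=0$, hence $F=F'$ by positive definiteness. Therefore $F$ is determined uniquely even within the ambient space $\cH_{q;d,m}$, a fortiori within $\mathsf{Q}_{q;d,m}$.

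I do not expect a genuine obstacle: the analytic content has already been isolated in Lemma~\ref{L0830}, Lemma~\ref{pqdiff} and Corollary~\ref{C19.9}, and the existence half is a direct application of the matricial Richter--Tchakaloff theorem. The only points requiring care are the bookkeeping identity $\tr(P(\eta_r)C_r)=\langle P(\eta_r),C_r\rangle$ (via Hermitianness of $C_r$), the matching of the differential-operator definition of $\Gamma_F$ with the integral against $\nu$, and the observation that non-degeneracy of the apolar scalar product already secures uniqueness in the full space, so that restricting to the cone $\mathsf{Q}_{q;d,m}$ adds no difficulty.
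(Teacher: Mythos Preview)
Your proposal is correct and follows essentially the same route as the paper: invoke Corollary~\ref{C19.9} for \eqref{G19.28} and the representing measure, apply the matricial Richter--Tchakaloff theorem for the converse, and use non-degeneracy of the apolar scalar product for uniqueness. The only cosmetic difference is that the paper singles out the case $\Lambda=0$ explicitly (taking $F=O$), whereas you absorb it into the general argument; either way works.
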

\begin{proof}
Equation \eqref{G19.16} gives \eqref{G19.28}.
The latter means that $\Gamma_F$ is a moment functional on $\cH_{q;d,m}$ and $\nu$ is a representing measure of $\Gamma_F$.

Conversely, let $\Lambda$ be an arbitrary moment functional on $\cH_{q;d,m}$.
If $\Lambda=0$, then $\Lambda=\Gamma_F$ for $F:=O\cdot\pi_{0,m}=O$.
Now let $\Lambda\neq0$.
By \cite[Theorem~5.1]{ms}, $\Lambda$ has an $s$\nobreakdash-atomic representing measure $\nu=\sum_{r=1}^s C_r\delta_{\eta_r}$, where $C_r\in \Hqgg $, $\eta_r\in \dR^d$, and $s\in\dN$.
Then $F:=\sum_{r=1}^s C_r\pi_{\eta_r,m}$ belongs to $\mathsf{Q}_{q;d,m}$ and formula \eqref{G19.16} implies that $\Lambda=\Gamma_F$.

Suppose that $\Gamma_F=\Gamma_G$ for $F,G\in\mathsf{Q}_{q;d,m}$.
Then $[P,F]=[P,G]$ by \eqref{G19.28} and hence $[P,F-G]=0$ for all $P\in\cH_{q;d,m}$.
Because $[\cdot,\cdot]$ is a scalar product, we conclude that $F=G$.
\end{proof}

\begin{exm}\label{E0837}
As usual, $\Delta:=\partial_1^2+\dotsb+\partial_d^2$ denotes the Laplacian.
Let $n\in\dN$, $C\in\Hqgg $, and define $F\colon\dR^d\to\Hq$ by $F(x):=C\lVert x\rVert^{2n}$.

Then the linear functional $\Lambda$ on $\cH_{q;d,2n}$ given by $\Lambda(P):=\Gamma_F(P)=\frac{1}{(2n)!}F(\partial)P$ is a moment functional on $\cH_{q;d,2n}$ and we have
\[
    \Lambda(P)
    =\frac{1}{(2n)!}C\Delta^nP
    =[P,F]
    \quad\text{for all }P\in\cH_{q;d,2n}.
\]

Indeed, since $h_{d,2n}(x):=\lVert x\rVert^{2n}$ belongs to $\mathsf{Q}_{1;d,2n}$ by \cite[Theorem~19.15]{sch17}, $F\in\mathsf{Q}_{q;d,2n}$ and the assertions follow from Theorem~\ref{T19.14}.
\end{exm}

\end{document}